\definecolor{nicered}{rgb}{0.6, 0, 0.1}
\definecolor{niceblue}{rgb}{0.06, 0.3, 0.57}
\definecolor{nicegreen}{rgb}{0.0, 0.51, 0.5}
\setlist[enumerate]{leftmargin=2em,label=\textup{(\roman*)}}
\crefname{equation}{Eq.}{Eqs.}
\crefname{theorem}{Theorem}{Theorems} 
\crefname{lemma}{Lemma}{Lemmas}
\crefname{corollary}{Corollary}{Corollaries}
\crefname{proposition}{Proposition}{Propositions}
\crefname{definition}{Definition}{Definitions}
\crefname{remark}{Remark}{Remarks}
\crefname{example}{Example}{Examples}
\crefname{notation}{Notation}{Notations}
\crefname{setup}{Setup}{Setup}
\crefname{question}{Question}{Question}
\crefname{convention}{Convention}{Conventions}
\newtheorem{theorem}{Theorem}[section]
\newtheorem{corollary}[theorem]{Corollary}
\newtheorem{lemma}[theorem]{Lemma}
\newtheorem{proposition}[theorem]{Proposition}
\newtheorem{theoremx}{Theorem}
\theoremstyle{definition}
\newtheorem{definition}[theorem]{Definition}
\newtheorem{example}[theorem]{Example}
\newtheorem{remark}[theorem]{Remark}
\newtheorem{convention}[theorem]{Convention}
\newtheorem{setup}[theorem]{Setup}
\numberwithin{equation}{section}
\newcommand{\m}{\mathfrak{m}}
\newcommand{\p}{\mathfrak{p}}
\newcommand{\n}{\mathfrak{n}}
\newcommand{\N}{\mathfrak{N}}
\newcommand{\A}{\mathfrak{A}}
\newcommand{\ideala}{\mathfrak{a}}
\newcommand{\FF}{\mathbb{F}}
\newcommand{\KK}{\mathbb{K}}
\newcommand{\QQ}{\mathbb{Q}}
\newcommand{\RR}{\mathbb{R}}
\newcommand{\cJ}{\mathcal{J}}
\newcommand{\Cech}{ \check{\rm{C}}}
\newcommand{\ZZ}{\mathbb{Z}}
\newcommand{\NN}{\mathbb{N}}
\newcommand{\PP}{\NN_{>0}} %positive integers
\newcommand{\kk}{\mathbb{k}}
\newcommand{\act}{\mathbin{\vcenter{\hbox{\scalebox{0.6}{$\bullet$}}}}}
\newcommand{\club}{\mathbin{\vcenter{\hbox{\scalebox{0.55}{$\clubsuit$}}}}}
\renewcommand{\geq}{\geqslant}
\renewcommand{\leq}{\leqslant}
\renewcommand{\ge}{\geqslant}
\renewcommand{\le}{\leqslant}
\newcommand{\ds}{\operatorname{s}^{\operatorname{diff}}}
\newcommand{\dif}[1]{^{{\langle #1\rangle}}}
\newcommand{\Hom}{\operatorname{Hom}}
\newcommand{\GrHom}[4]{\Hom_{#1}(#2,#3)_{#4}}
\newcommand{\HomogHom}[3]{\GrHom{#1}{#2}{#3}{0}}
\newcommand{\End}{\operatorname{End}}
\newcommand{\GrEnd}[3]{\End_{#1}(#2)_{#3}}
\newcommand{\HomogEnd}[2]{\GrEnd{#1}{#2}{0}}
\newcommand{\ord}{\operatorname{ord}}
\newcommand{\Ass}{\operatorname{Ass}}
\newcommand{\Dim}{\operatorname{Dim}}
\newcommand{\Len}{\operatorname{length}}
\newcommand{\id}{\operatorname{id}}
\newcommand{\gr}{\operatorname{gr}}	
\newcommand{\e}{\operatorname{e}}
\newcommand{\ann}{\operatorname{Ann}}
\NewDocumentCommand \pd { m o }
{
\IfNoValueTF {#2}
{ \partial_{#1} }
{ \partial_{#1} \act #2 }
}
\NewDocumentCommand \fpd { m o }
{
\IfNoValueTF {#2}
{ \frac{\partial\ }{\partial #1 } }
{ \frac{\partial #2 }{\partial #1 } }
}
\newcommand{\fs}{\boldsymbol{f^s}}
\newcommand{\gbf}[2]{\mathcal{B}^{#1}_{#2}}
\newcommand{\filtration}[3]{\mathcal{#1}^{#2}_{#3}}
\newcommand{\olfiltration}[3]{\overline{\mathcal{#1}}^{\hskip .05em #2}_{\hskip -.03 em #3}}
\newcommand{\tildefiltration}[3]{\widetilde{\mathcal{#1}}^{\hskip .05em #2}_{\hskip -.03 em #3}}
\newcommand{\Gi}[1]{\filtration{F}{#1}{}} 
\newcommand{\Gii}[1]{\filtration{G}{#1}{}}
\newcommand{\lt}{\bar{\delta}}
\newcommand{\bu}{{}^{\bullet}}
\newcommand{\cga}{commutative graded $\kk$-algebra}
\newcommand{\cat}{\mathbf{C}}
\newcommand{\loccit}{\emph{loc.~cit.}}
\newcommand{\details}[2][]{} 
\title[Bernstein's inequality and holonomicity]{Bernstein's inequality and holonomicity for certain singular rings}
\author[]{Josep \`Alvarez Montaner{$^1$}}
\address{Departament de Matem\`atiques  and  Institut de Matem\`atiques de la UPC-BarcelonaTech (IMTech),  Universitat Polit\`ecnica de Catalunya, Av.~Diagonal 647,  08028 Barcelona. Centre de Recerca Matem\`atica (CRM)}
\email{josep.alvarez@upc.edu}
\author[]{Daniel J. Hern\'andez{$^2$}}
\address{Department of Mathematics, University of Kansas, Lawrence, KS 66045, USA}
\email{hernandez@ku.edu}
\author[]{Jack Jeffries{$^3$}}
\address{University of Nebraska-Lincoln, Lincoln, NE~68502, USA}
\email{jack.jeffries@unl.edu}
\author[]{Luis N\'u\~nez-Betancourt${^4}$}
\address{Centro de Investigaci\'on en Matem\'aticas, Guanajuato, Gto., M\'exico}
\email{luisnub@cimat.mx}
\author[]{Pedro Teixeira}
\address{Department of Mathematics, Knox College, Galesburg, IL 61401, USA}
\email{pteixeir@knox.edu}
\author[]{Emily E. Witt${^5}$}
\address{Department of Mathematics, University of Kansas, Lawrence, KS 66045, USA}
\email{witt@ku.edu}
\thanks{{$^1$}Partially supported by grants  PID2019-103849GB-I00 (AEI/10.13039/501100011033) and 2017SGR-932 (AGAUR). Also supported by Salvador de Maradiaga grant (ref.~PRX 19/00405).}
\thanks{{$^2$}Partially supported by the NSF Grants DMS-1304250 and DMS-1600702.}
\thanks{{$^3$}Partially supported by the NSF CAREER Award DMS-2044833.}
\thanks{{$^4$}Partially supported by the CONACYT Grant 284598 and C\'atedras Marcos Moshinsky.}
\thanks{{$^5$}Partially supported by the NSF CAREER Award DMS-1945611.}
\subjclass[2020]{Primary: 14F10, 13N10, 13A35, 16S32; Secondary: 13D45, 14B05, 13A50.}
\keywords{Ring of differential operators, Bernstein's inequality, Bernstein filtration, holonomic $D$-module, ring of invariants, strong $F$-regularity, finite $F$-representation type.}
\begin{document}
\maketitle

\begin{abstract}
   In this manuscript, we prove the Bernstein inequality and develop the theory of holonomic $D$-modules for rings of invariants of finite groups in characteristic zero, and for strongly $F$-regular finitely generated graded algebras with FFRT in prime characteristic.
   In each of these cases, the ring itself, its localizations, and its local cohomology modules are holonomic.
   %{\cb Also holonomic is a certain $D$-module that plays a role in showing the existence of the Bernstein--Sato polynomial.}
   We also show that holonomic $D$-modules, in this context, have finite length and we prove the existence of Bernstein--Sato polynomials in characteristic zero. We obtain these results using a more general version of Bernstein filtrations.
\end{abstract}

% \vskip -3mm

% \selectlanguage{french}
% \begin{abstract}
%    Dans ce manuscrit, nous prouvons l'in\'egalit\'e de Bernstein et d\'eveloppons la th\'eorie des $D$-modules holonomiques pour des anneaux d'invariants de groupes finis en caract\'eristique z\'ero, et pour des alg\`ebres gradu\'ees de g\'en\'eration finie fortement $F$-r\'eguli\`eres avec FFRT en caract\`eristique positive. %premi\`ere.
%    Dans chacun de ces cas, l'anneau lui-m\^eme, ses localisations et ses modules de cohomologie locale sont holonomes.
%    Nous montrons \'egalement que les $D$-modules holonomiques, dans ce contexte, ont une longueur finie. Nous obtenons ces r\'esultats en utilisant une version plus g\'en\'erale des filtrations de Bernstein.
% \end{abstract}

% \selectlanguage{english}

\setcounter{tocdepth}{1}
\tableofcontents

%%%%%%%%%%%%%%%%%%%%%%%%%%%%%%%%%%%%%%%%%%
\section{Introduction}
%%%%%%%%%%%%%%%%%%%%%%%%%%%%%%%%%%%%%%%%%%

Let $R$ be a regular $\kk$-algebra, and $D_{R|\kk}$ its ring of $\kk$-linear differential operators, where $\kk$ is a field of characteristic zero.
%Let $S=\kk[x_1,\ldots,x_n]$ be a polynomial ring over a field $\kk$ of characteristic zero.
%In this case, the ring of $\kk$-linear differential operators $D_{S!\kk}$ coincides the Weyl algebra, $S\left\langle \frac{\partial}{\partial x_1},\ldots, \frac{\partial}{\partial x_1}\right\rangle$. 
A fundamental theorem in the theory of $D_{R|\kk}$-modules is the \emph{Bernstein inequality}, which establishes that the dimension of every finitely generated $D_{R|\kk}$-module is at least $\dim(R)$.
Sato, Kawai, and Kashiwara \cite{SKK} and Malgrange \cite{Malgrange79} proved this result for holomorphic functions.
Gabber \cite{Gabber} gave another proof that works in smooth cases such as polynomial rings and formal power series rings over a field.
Joseph (see \cite[Theorem~9.4.2]{Coutinho}) gave a simple proof for polynomial rings, considering the \emph{Bernstein filtration} on $D_{R|\kk}$ instead of the order filtration.
More recently, Bavula \cite{Bavula} proved the Bernstein inequality for polynomial rings in positive characteristic.

Modules with minimal dimension are called \emph{holonomic}, and are central to the theory of $D_{R|\kk}$-modules since they satisfy rather nice properties.
Holonomic modules have finite length and finite-dimensional de Rham cohomology \cite{KashiwaraRham,VdEdRham}.
Furthermore, holonomic modules play an important role in the Riemann--Hilbert correspondence \cite{MKRH0,ZMRH0,MKRH1,ZMRH2,ZMRH1}. 
In addition, holonomicity can be characterized via homological properties \cite{Bjork79}.

The Bernstein inequality and the study of holonomic modules have generally been limited to regular rings; see below for further discussion.
In this work we prove the Bernstein inequality and develop a theory of holonomic modules for certain singular $\kk$-algebras.
Specifically, we summarize our main results in the following theorem, which is pieced together from various results throughout the text.

\begin{theoremx}[{See \Cref{Cor_invariant_Bernstein,Cor_FFRT_Bernstein}}]\label{MainThm}
   Let $R$ be one of the following.
   \begin{enumerate}[$(a)$]
      \item A ring of invariants of the action of a finite group on a polynomial ring over a field $\kk$ of characteristic zero, or
      \item A strongly $F$-regular finitely generated graded algebra over a perfect field $\kk$ of positive characteristic with finite $F$-representation type \textup(FFRT\textup).
   \end{enumerate}
   Then any nonzero $D_{R|\kk}$-module has dimension at least $\dim(R)$.
   In addition, any nonzero $D_{R|\kk}$-module of dimension $\dim(R)$ and finite multiplicity has finite length in the category of $D_{R|\kk}$-modules.
   In particular, any principal localization $R_f$ and any local cohomology module $H^i_I(R)$ has dimension $\dim(R)$, and therefore has finite length as a $D_{R|\kk}$-module.
\end{theoremx}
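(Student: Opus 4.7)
The plan is to adapt Joseph's filtration-based proof of the Bernstein inequality to these singular settings by constructing a suitable generalization of the Bernstein filtration on $D_{R|\kk}$. Concretely, I would produce a filtration $\{\mathcal{B}_j\}_{j\geq 0}$ by finite-dimensional $\kk$-subspaces of $D_{R|\kk}$ with two essential properties: (i) $\dim_{\kk}\mathcal{B}_j$ grows polynomially of degree exactly $2\dim(R)$; and (ii) for every finitely generated $D_{R|\kk}$-module $M$ equipped with a good filtration $\{\Gamma_j\}$ compatible with $\{\mathcal{B}_j\}$, the action induces an injection $\mathcal{B}_j \hookrightarrow \Hom_{\kk}(\Gamma_j,\Gamma_{aj+b})$ for suitable constants $a,b$, after passing to a faithful cyclic quotient of $M$. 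Granting (i) and (ii), the inequality is immediate: if $\dim_{\kk}\Gamma_j \sim j^d$ with $d<\dim(R)$, then the codomain of the injection grows of degree $2d<2\dim(R)$, contradicting (i).

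In Case $(a)$ of invariant rings, with $S = \kk[x_1,\ldots,x_n]$ and $R = S^G$, I would exploit the classical identification of $D_{R|\kk}$ with invariant differential operators inside the Weyl algebra $D_{S|\kk}$, and let $\mathcal{B}_j$ be obtained by intersecting this subalgebra with the standard Bernstein filtration on $D_{S|\kk}$. The dimension count (i) then reduces, via a Molien-type computation, to the fact that the standard Bernstein filtration on the Weyl algebra has polynomial growth of degree $2n = 2\dim(R)$. In Case $(b)$ of FFRT rings, finite $F$-representation type provides a decomposition $F^e_*R = \bigoplus_i M_i^{\oplus m_{i,e}}$ into finitely many indecomposable summand classes, which yields a natural description of each Frobenius-level piece $D_{R|\kk}^{(e)}$ via finitely many Hom spaces of controlled size. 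One then builds $\mathcal{B}_j$ by combining the grading on $R$ with the Frobenius-level filtration, verifying (i) by way of asymptotic estimates for $m_{i,e}$. Property (ii) reduces in either case to a faithfulness pairing argument standard in $D$-module theory.

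Once the Bernstein inequality is established, I would define multiplicity $\e(M)$ as the normalized leading coefficient of the Hilbert-type polynomial $j\mapsto\dim_{\kk}\Gamma_j$ with respect to $\{\mathcal{B}_j\}$, observe its additivity on short exact sequences among modules of the same dimension, and deduce that a module with $\dim M = \dim(R)$ and finite multiplicity has finite length, since any infinite descending chain of submodules of the same dimension would force a strictly decreasing sequence of positive integers. The concrete modules at the end of the statement follow as in the classical case: $R_f$ is $D_{R|\kk}$-finitely generated by $1/f$, the injection $R\hookrightarrow R_f$ combined with the Bernstein inequality yields $\dim R_f = \dim(R)$, and an explicit good filtration built from powers of $1/f$ supplies an upper bound on its multiplicity. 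Each local cohomology module $H^i_I(R)$ is a subquotient of a direct sum of such localizations in the \v{C}ech complex, hence also has dimension $\dim(R)$ and finite multiplicity, and therefore finite length.

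The main obstacle will be the construction of $\mathcal{B}_j$ with polynomial growth of exactly degree $2\dim(R)$, especially in the FFRT case: calibrating the Frobenius-level pieces against the grading must be done precisely enough to rule out either slower-than-polynomial growth (which weakens the argument below the threshold $2\dim(R)$) or faster-than-polynomial growth (which breaks Joseph's inequality entirely), and this requires a genuine asymptotic analysis of the summand multiplicities $m_{i,e}$ in the spirit of Seibert and Smith--Van den Bergh. A secondary subtlety is verifying that $R_f$ has \emph{finite} multiplicity rather than merely the correct dimension; in characteristic zero this is intimately connected to the existence of a Bernstein--Sato polynomial for $f\in R$, which the abstract flags as a separate output of the same machinery.
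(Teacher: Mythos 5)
The decisive gap is your property (ii). For the Weyl algebra the injectivity of $\mathcal{B}_j\to\Hom_\kk(\Gamma_j,\Gamma_{aj+b})$ is not a formal ``faithfulness pairing'': it is a quantitative simplicity statement, and for the rings in this theorem it is exactly where all the work lies. The paper isolates the needed condition as \emph{linear simplicity} (\Cref{linearly_simple}): for every nonzero $\delta$ in the $i$-th filtered piece one has $1\in\mathcal{F}^{Ci}\,\delta\,\mathcal{F}^{Ci}$, which yields the injection in \Cref{LemmaHomBernstein}. Proving this for $D_{R^G|\kk}$ (\Cref{ThmRGLinearSimple}) needs the reduction modulo pseudoreflections so that $D_{R^G|\kk}\cong(D_{R|\kk})^G$, finite generation of $D_{R|\kk}$ as a right $D_{R^G|\kk}$-module, an iterated-commutator argument producing a nonzero invariant $f$ of controlled degree, and the Reynolds operator; proving it for strongly $F$-regular FFRT rings (\Cref{ThmFFRTLinearSimple}) needs the Krull--Schmidt decomposition of $R^{1/p^e}$, the uniform containment $I_{a+e}(R)\subseteq\m^{p^e}$ of Polstra--Tucker, and an analysis of the maximal homogeneous two-sided ideals of $\End_R(R^{1/p^e})$. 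No hypothesis-free ``standard'' argument can exist: \Cref{eg51,eg61} give graded rings (a cubic cone with rational singularities; $\kk[x,y]/(xy)$, which has FFRT but is not strongly $F$-regular) for which the residue field is a $D$-module whose filtrations have dimension zero, so a proof of (ii) that does not use the specific hypotheses would establish a false statement. Note also that a cyclic quotient need not be faithful, and faithfulness of $M$ does not prevent a nonzero $\delta\in\mathcal{B}_j$ from annihilating the finite-dimensional piece $\Gamma_j$. Your dimension count (i) in case (a) via a Molien-type computation is reasonable (the paper instead gets the lower bound from positivity of the differential signature in \Cref{thm:dim-Bernstein-filtration}, and the upper bound from the Bernstein inequality applied to $R$ itself, which again presupposes linear simplicity).

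Your finite-length argument also fails as stated, because it relies on additivity of multiplicity and on multiplicities being positive integers. In this generality the associated graded of the generalized Bernstein filtration need not be Noetherian or even finitely generated (certainly not in case (b), where $D_{R|\kk}$ is a union of the level filtration), so $\dim_\kk\Gamma_j$ need not be eventually polynomial, the multiplicity is only a limit superior that can fail to be a limit, and the paper explicitly records (\Cref{HolonomicNotPrefect}) that multiplicity is neither additive nor subadditive and that extensions of holonomic modules need not be holonomic. The paper's route (\Cref{ThmLenHol}) avoids additivity entirely: for an arbitrary chain of submodules it applies the injection of \Cref{LemmaHomBernstein} to each subquotient, sums (the dimensions telescope), and obtains $t\,\e(\mathcal{F}^{\bullet})\le(C+1)^{\theta}(C+2)^{\theta}\e(\mathcal{G}^{\bullet})^2$, where the positive lower bound per subquotient again comes from linear simplicity via the multiplicity half of \Cref{thm:Bern}, not from integrality. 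Finally, your claim that $R_f$ is $D_{R|\kk}$-generated by $1/f$ is neither known nor needed for singular $R$; the paper instead exhibits an explicit compatible filtration $\widetilde{\mathcal{G}}^{j}=f^{-Cj}\mathcal{G}^{j(Ca+1)}$ on $M_f$ using the commutation formula (\Cref{commute-f,LemmaLocSameDim}), which is the rigorous version of your ``powers of $1/f$'' remark and suffices for localizations and \v{C}ech cohomology.
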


First, we build upon the work of Bavula \cite{Bavula} on filtered $\kk$-algebras and filtered modules. 
We consider his definition of dimension, which coincides with the usual Gelfand--Kirillov dimension for finitely generated $\kk$-algebras, although these notions are different in general.
We also consider a version of multiplicity for filtered $\kk$-algebras and modules that is convenient for our treatment of holonomic modules.
A key ingredient in Bavula's work is a technical condition that we call \emph{linear simplicity} (see \Cref{linearly_simple}), which gives a sufficient condition for the Bernstein inequality to hold, and to have a theory of holonomic modules (see \Cref{thm:Bern}).
Our main contribution is a proof in this setting establishing that holonomic modules have finite length (see \Cref{ThmLenHol}). 

Let $R$ be a finitely generated graded $\kk$-algebra, and $D_{R|\kk}$ its ring of $\kk$-linear differential operators.
In order to have a good theory of holonomic $D_{R|\kk}$-modules that extends the case of regular rings, we want $D_{R|\kk}$ to be linearly simple with respect to an appropriate filtration, so that  Bernstein's inequality holds.
Moreover, we want the dimension of $D_{R|\kk}$ to be twice the dimension of $R$, and its multiplicity to be finite and positive.
If $R$ satisfies all these properties, we call it a \emph{Bernstein algebra}.
In this case, $R$ is a holonomic $D_{R|\kk}$-module, and the class of holonomic $D_{R|\kk}$-modules is closed under localization and taking local cohomology.
We also show that a certain module over $R\otimes_\kk \kk(s)$ is holonomic, and as a consequence, we obtain the existence of the Bernstein--Sato polynomial for Bernstein algebras in characteristic zero (see \Cref{CorBS1,CorBS2}).

In \Cref{Bernstein_filtration}, we give a generalization of the Bernstein filtration that works for the graded ring $D_{R|\kk}$.
The difficult part when dealing with specific cases is to prove that $D_{R|\kk}$ is linearly simple with respect to the generalized Bernstein filtration.
Sufficient conditions to ensure the remaining properties of a Bernstein algebra are given in terms of \emph{differential signature} \cite{BJNB} (see \Cref{thm:dim-Bernstein-filtration}).
 
We note that in characteristic zero, there are graded hypersurfaces with rational singularities that are not Bernstein algebras, and fail the conclusions of \Cref{MainThm}; see \Cref{eg51,eg52}.
Likewise, in positive characteristic, there are graded hypersurfaces that are strongly $F$-regular or have FFRT (but not both) that are not Bernstein algebras, and again fail the conclusions of \Cref{MainThm}; see \Cref{eg61,eg62}.
Thus, strong hypotheses are necessary for a statement akin to \Cref{MainThm}.

The main results of this work provide classes of singular rings that are Bernstein algebras.
First, we consider the case of a ring of invariants of a finite group $G$ acting linearly on a polynomial ring $R$ over a field $\kk$ of characteristic zero (see \Cref{Cor_invariant_Bernstein}). 
Moreover, we prove in \Cref{holonomic_direct_summand} that a $D_{R^G|\kk}$-module is holonomic if and only if it is a \emph{differential direct summand} of a holonomic $D_{R|\kk}$-module \cite{AMHNB, square}.

For our second class of Bernstein algebras, we focus on algebras over a field of positive characteristic. 
Specifically, we work on rings with finite $F$-representation type---FFRT for short---that were introduced by Smith and Van den Bergh \cite{SVDB}. 
Examples of rings with FFRT include complete regular rings, quotients of polynomial rings by monomial ideals, normal monoid rings, affine cones of Grassmannians, and graded direct summands of polynomial rings.
In \Cref{Cor_FFRT_Bernstein}, we show that certain rings with FFRT are Bernstein algebras.

We point out that Gabber \cite{Gabber} proved the integrability of the characteristic variety for the case of filtered rings whose associated graded rings are commutative and Noetherian.
In this setting, van den Essen \cite{VdE86} considered the notion of holonomicity, but to the best of our knowledge, this notion has only been applied to smooth algebras when dealing with rings of differential operators. We show in \Cref{example:noVDE} that 
for our first class of Bernstein algebras, although Gabber's theorem applies, there are no nonzero $D$-modules that are holonomic in the sense of van den Essen. Thus our results do not follow from Gabber's theorem.  More recently, Losev \cite{Losev} showed a version of the Bernstein inequality
using representation theory of filtered algebras with commutative Noetherian associated graded rings, whose spectra have finitely many symplectic leaves. In particular, this theory applies to certain algebras in characteristic zero, but to the best of our knowledge, 
it does not provide examples of $D_{R|\kk}$-modules satisfying the Bernstein inequality for a singular $\kk$-algebra $R$.

In one of the first uses of $D$-modules in commutative algebra, Lyubeznik \cite{LyuDMod} proved the finiteness of the set of associated primes of a local cohomology module of a regular ring in characteristic zero.
A key point of his argument is that local cohomology modules of holonomic $D$-modules are themselves holonomic, and thus have finite length as $D$-modules. 
The finiteness of sets of associated primes of local cohomology modules has since been established for rings of invariants of finite groups \cite{NBDS} and rings with FFRT \cite{TT,HNB,DQ}.
These proofs, however, did not use the theory of $D$-modules, though recently a new proof using $D$-module theory was given for rings of invariants  \cite{AMHNB}.
\Cref{MainThm} gives another proof for the previously mentioned classes of rings via $D$-modules that resembles the one given by Lyubeznik.

\subsection*{Acknowledgements}
This project was initiated and developed during an American Institute of Mathematics SQuaRE (Structured Quartet Research Ensemble) titled \emph{Roots of Bernstein--Sato polynomials}. 
We are grateful for AIM's hospitality, and for their support of this project. 

%%%%%%%%%%%%%%%%%%%%%%%%%%%%%%%%%%%%%%%%%%
\section{Filtrations} \label{SecBackgrpoundFil}
%%%%%%%%%%%%%%%%%%%%%%%%%%%%%%%%%%%%%%%%%%

In this section, we establish a number of key facts on filtrations and their numerical invariants that we use throughout this work.
Much, if not all, of the material in this section is known \cite{Bavula}, with the possible exception of the discussion of multiplicity.
To keep the paper self-contained and to avoid unnecessary finiteness hypotheses, we provide proofs for the claims made in this section.

Throughout this manuscript, $\kk$ always denotes a field, which has arbitrary characteristic unless otherwise stated.
Within \Cref{SecBackgrpoundFil,SecBernsteinHol}, $A$ denotes an associative but not necessarily commutative $\kk$-algebra.

\subsection{Filtrations, dimension, and multiplicity}

\begin{definition}\label{notation:filtration}
   Throughout this paper, by a \emph{filtration} $\filtration{F}{\bullet}{}$ on a $\kk$-algebra $A$ we mean an ascending, exhaustive filtration by finite-dimensional $\kk$-vector spaces, indexed by the nonnegative integers, and such that $\filtration{F}{0}{}=\kk$ and $\filtration{F}{i}{}\filtration{F}{j}{} \subseteq \filtration{F}{i+j}{}$ for each $i,j \in \NN$.
   If $\filtration{F}{\bullet}{}$ is a filtration on $A$, we say that $(A,\filtration{F}{\bullet}{})$ is a \emph{filtered $\kk$-algebra}.

   Given a filtered $\kk$-algebra $(A,\filtration{F}{\bullet}{})$, and a left (respectively, right) $A$-module $M$, a \emph{filtration $\filtration{G}{\bullet}{}$ on $M$ compatible with $\filtration{F}{\bullet}{}$} is an ascending, exhaustive filtration on $M$ by finite-dimensional $\kk$-vector spaces, indexed by the nonnegative integers, and such that $\filtration{F}{i}{}\filtration{G}{j}{} \subseteq \filtration{G}{i+j}{}$ (respectively, $\filtration{G}{j}{}\filtration{F}{i}{} \subseteq \filtration{G}{i+j}{}$) for every $i$ and $j$ in $\NN$.
   If $\filtration{G}{\bullet}{}$ is a filtration on $M$ compatible with $\filtration{F}{\bullet}{}$, we say that \emph{$(M,\filtration{G}{\bullet}{})$ is an $(A,\filtration{F}{\bullet}{})$-module}.
\end{definition}

\begin{convention}
   If $\filtration{F}{\bullet}{}$ is a filtration on an algebra or module, we adopt the convention that $\filtration{F}{i}{} = 0$ for each negative integer $i$.
\end{convention}

\begin{definition} \label{dim_mult}
	Let $\filtration{G}{\bullet}{}$ be an ascending sequence of finite-dimensional $\kk$-vector spaces. We define
\begin{align*} \Dim(\filtration{G}{\bullet}{}) &= \inf \left\{ t\in \RR_{\geq 0} \ \big|  \lim\limits_{i\to\infty} \frac{\dim_\kk \filtration{G}{i}{}}{i^t}=0\right\} 
	\\&=\inf \left\{ t\in \RR_{\geq 0} \ \big|  \dim_\kk \filtration{G}{i}{}\leq i^t\;  \ \forall \ i\gg 0 \right\}
	\\ &= \inf \left\{ t\in \RR_{\geq 0} \ \big|  \ \exists \ C: \dim_\kk \filtration{G}{i}{}\leq C i^t\; \ \forall \ i\gg 0 \right\}.
\end{align*}
If $d=\Dim(\filtration{G}{\bullet}{})$ is finite, then the \emph{multiplicity} of $\filtration{G}{\bullet}{}$ is the extended real number
\[
   \e(\filtration{G}{\bullet}{})= \limsup\limits_{i\to\infty} \frac{\dim_\kk \filtration{G}{i}{}}{i^d} \in\RR_{\geq 0} \cup \{ \infty\}.
\]
By convention, if $\Dim(\filtration{G}{\bullet}{})$ is infinite, then we set $\e(\filtration{G}{\bullet}{})=\infty$.

If $(A,\filtration{G}{\bullet}{})$ is a filtered $\kk$-algebra or $(M,\filtration{G}{\bullet}{})$ is a left or right $(A,\filtration{F}{\bullet}{})$-module, then we define the dimension (respectively, multiplicity) of $(A,\filtration{G}{\bullet}{})$, or of $(M,\filtration{G}{\bullet}{})$, as the dimension (respectively, multiplicity) of $\filtration{G}{\bullet}{}$.
\end{definition}

We note that the dimension of a filtered $\kk$-algebra or module may be infinite, or may fail to be an integer, or even rational, if finite (see \Cref{prop:any-larger-dim}).
Furthermore, there are examples showing that the limit superior in the definition of multiplicity cannot simply be replaced with  a limit, as the limit may fail to exist (see \cite[Section~4]{TwoExamples}). 

\begin{definition}\label{DefEquivalenceFiltrations}
Let $\Gi{\bullet}$ and $\Gii{\bullet}$ be two filtrations on a $\kk$-algebra or module.
	\begin{enumerate}
 		\item\label{item:shiftdom} We say that $\Gi{\bullet}$ is \emph{shift dominated} by $\Gii{\bullet}$ if there exists $j\in\PP$ such that 
		\[\Gi{i} \subseteq \Gii{i+j} \quad 		\text{for all} \ i\in \NN.\]
		\item\label{item:lindom} We say that $\Gi{\bullet}$ is \emph{linearly dominated} by $\Gii{\bullet}$ if there exists $C\in\PP$ such that 
		\[\Gi{i} \subseteq \Gii{Ci} \quad \text{for all} \ i\in \NN.\]
		\item We say that $\Gi{\bullet}$ and $\Gii{\bullet}$ are \emph{shift equivalent} if both $\Gi{\bullet}$ is shift dominated by $\Gii{\bullet}$, and $\Gii{\bullet}$ is shift dominated by $\Gi{\bullet}$.
		\item We say that $\Gi{\bullet}$ and $\Gii{\bullet}$ are \emph{linearly equivalent} if both $\Gi{\bullet}$ is linearly dominated by $\Gii{\bullet}$, and $\Gii{\bullet}$ is linearly dominated by $\Gi{\bullet}$.
	\end{enumerate}
\end{definition}

Note that if $\Gi{0} \subseteq \Gii{0}$ (e.g., when $\Gi{\bullet}$ and $\Gii{\bullet}$ are filtrations on a $\kk$-algebra) and there exist positive integers $C$ and $j$ such that $\Gi{i} \subseteq \Gii{Ci+j}$ for all $i \in \PP$, then $\Gi{\bullet}$ is linearly dominated by $\Gii{\bullet}$.
In particular, when $\Gi{0} \subseteq \Gii{0}$, shift domination implies linear domination.
We also note that in condition \ref{item:shiftdom} in \Cref{DefEquivalenceFiltrations}, we may replace ``for all $i\in \NN$'' with ``for all $i\gg 0$.''
The same replacement can be made in condition \ref{item:lindom}, provided that $\Gi{0} \subseteq \Gii{0}$.

Observe also that shift and linear domination are transitive, and that shift and linear equivalence are equivalence relations.

\begin{proposition}\label{prop:lin-dom}
   Let $\Gi{\bullet}$ and $\Gii{\bullet}$ be filtrations on a $\kk$-algebra or module.
   If $\Gi{\bullet}$ is linearly dominated by $\Gii{\bullet}$, then
	\begin{enumerate}
		\item\label{part:lin-dom1} $\Dim(\Gi{\bullet})\leq \Dim(\Gii{\bullet})$.
			\end{enumerate}
			If $\Gi{\bullet}$ is linearly dominated by $\Gii{\bullet}$ and $\Dim(\Gi{\bullet})= \Dim(\Gii{\bullet})$, then
		\begin{enumerate}\setcounter{enumi}{1}
		\item\label{part:lin-dom1.5} $\e(\Gii{\bullet})<\infty$ implies $\e(\Gi{\bullet})<\infty$, and
		\item\label{part:lin-dom2} $\e(\Gii{\bullet})=0$ implies $\e(\Gi{\bullet})=0$.
			\end{enumerate}
Finally, if $\Gi{\bullet}$ is shift dominated by $\Gii{\bullet}$ and $\Dim(\Gi{\bullet})= \Dim(\Gii{\bullet})$, then 
\begin{enumerate}\setcounter{enumi}{3}
\item\label{part:lin-dom3} $\e(\Gi{\bullet})\leq \e(\Gii{\bullet})$.
	\end{enumerate}
\end{proposition}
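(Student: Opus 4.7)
The plan is to prove all four parts by directly using the containment $\dim_\kk \Gi{i} \leq \dim_\kk \Gii{Ci}$ (respectively, $\dim_\kk \Gi{i} \leq \dim_\kk \Gii{i+j}$) provided by the hypothesis, and then exploiting the asymptotic definitions of $\Dim$ and $\e$.

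For part \ref{part:lin-dom1}, pick any real $t > \Dim(\Gii{\bullet})$; by definition there exists a constant $C'$ such that $\dim_\kk \Gii{i} \leq C' i^t$ for all $i \gg 0$. Combining with $\Gi{i} \subseteq \Gii{Ci}$ yields $\dim_\kk \Gi{i} \leq C' (Ci)^t = C' C^t i^t$ for $i \gg 0$, which shows $\Dim(\Gi{\bullet}) \leq t$ by the third characterization in \Cref{dim_mult}. Taking infimum over such $t$ gives \ref{part:lin-dom1}.

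For parts \ref{part:lin-dom1.5} and \ref{part:lin-dom2}, write $d = \Dim(\Gi{\bullet}) = \Dim(\Gii{\bullet})$ and use the chain
\[
\frac{\dim_\kk \Gi{i}}{i^d} \leq \frac{\dim_\kk \Gii{Ci}}{i^d} = C^d \cdot \frac{\dim_\kk \Gii{Ci}}{(Ci)^d}.
\]
Taking $\limsup$ and noting that the right-hand $\limsup$, taken along the subsequence $Ci$, is bounded above by $\e(\Gii{\bullet})$, we obtain $\e(\Gi{\bullet}) \leq C^d \cdot \e(\Gii{\bullet})$. This inequality immediately gives both \ref{part:lin-dom1.5} (finiteness is preserved) and \ref{part:lin-dom2} (vanishing is preserved).

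For part \ref{part:lin-dom3}, the shift-domination hypothesis $\Gi{i} \subseteq \Gii{i+j}$ gives
\[
\frac{\dim_\kk \Gi{i}}{i^d} \leq \frac{\dim_\kk \Gii{i+j}}{(i+j)^d} \cdot \left(\frac{i+j}{i}\right)^d.
\]
Since $\left(\frac{i+j}{i}\right)^d \to 1$ as $i \to \infty$, taking $\limsup$ yields $\e(\Gi{\bullet}) \leq \e(\Gii{\bullet})$, as the $\limsup$ of the first factor along the shifted indices equals $\e(\Gii{\bullet})$.

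There is no real obstacle here: the statements are unpacking of the definitions combined with the elementary fact that $(1 + j/i)^d \to 1$ and that a $\limsup$ along a subsequence is bounded by the full $\limsup$. The only care needed is in part \ref{part:lin-dom3}, where one cannot simply use linear domination (which is weaker and would introduce a constant $C^d$), so the shift hypothesis and the comparison of $i+j$ with $i$ is essential to get equality of the leading constants rather than just finiteness.
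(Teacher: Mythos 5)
Your proof is correct and follows essentially the same route as the paper: both arguments compare $\dim_\kk$ of the filtration pieces via the containments $\Gi{i}\subseteq\Gii{Ci}$ (respectively $\Gi{i}\subseteq\Gii{i+j}$), rescale by $C^t$ (or $C^d$) using that a limit superior along the subsequence $Ci$ is bounded by the full limit superior, and for the shift case use that $\bigl((i+j)/i\bigr)^d\to 1$ to avoid any constant loss. The only cosmetic difference is that you handle part (i) separately via the explicit characterization of $\Dim$, while the paper derives (i)--(iii) from a single limsup inequality valid for all $t$; the content is the same.
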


\begin{proof}
Fix $C\in \PP$ such that $\Gi{i}\subseteq \Gii{Ci}$ for all $i\in \NN$.
		Given $t\in\RR_{\geq 0}$, we have
		\begin{align*}\limsup\limits_{i\to \infty} \frac{\dim_\kk \Gi{i}}{i^t}
		&\leq \limsup\limits_{i\to \infty} \frac{\dim_\kk \Gii{Ci}}{i^t} \\&= C^t \limsup\limits_{i\to \infty} \frac{\dim_\kk \Gii{Ci}}{(Ci)^t} 
                  \leq C^t \limsup\limits_{i\to \infty} \frac{\dim_\kk \Gii{i}}{i^t}.
                \end{align*}
	Parts~\ref{part:lin-dom1}, \ref{part:lin-dom1.5}, and~\ref{part:lin-dom2} then follow from the definitions.
		
For part~\ref{part:lin-dom3}, if $\Gi{\bullet}$ and $\Gii{\bullet}$ have dimension $d$ and finite multiplicity, and $j \in \PP$ is such that $\Gi{i}\subseteq \Gii{i+j}$ for all $i\in \NN$, then 
\begin{align*} \e(\Gi{\bullet}) &= \limsup\limits_{i\to \infty} \frac{\dim_\kk \Gi{i}}{i^d}
\leq \limsup\limits_{i\to \infty} \frac{\dim_\kk \Gii{i+j}}{i^d} \\&= \limsup\limits_{i\to \infty} \frac{(i+j)^d}{i^d} \frac{\dim_\kk \Gii{i+j}}{(i+j)^d} = \limsup\limits_{i\to \infty} \frac{\dim_\kk \Gii{i}}{i^d}= \e(\Gii{\bullet}).\qedhere
\end{align*}
	\end{proof}

The following corollary follows immediately from \Cref{prop:lin-dom}.

\begin{corollary}\label{prop:lin-equiv}
   Let $\Gi{\bullet}$ and $\Gii{\bullet}$ be two filtrations on a $\kk$-algebra or module.
   If $\Gi{\bullet}$ and $\Gii{\bullet}$ are linearly equivalent, then
	\begin{enumerate}
		\item $\Dim(\Gi{\bullet})=\Dim(\Gii{\bullet})$,
		\item $\e(\Gi{\bullet})<\infty$ if and only if $\e(\Gii{\bullet})<\infty$, and
		\item $\e(\Gi{\bullet})>0$ if and only if $\e(\Gii{\bullet})>0$.
	\end{enumerate}
If $\Gi{\bullet}$ and $\Gii{\bullet}$ are shift equivalent, then in addition we have
	\begin{enumerate}\setcounter{enumi}{3}
\item $\e(\Gi{\bullet})=\e(\Gii{\bullet})$. \qed
\end{enumerate}
\end{corollary}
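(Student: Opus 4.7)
The plan is to simply invoke \Cref{prop:lin-dom} twice, once in each direction, for each of the four claims. This is why the text marks it as an immediate corollary; no new ideas are required, only the symmetry built into linear and shift equivalence.

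First I would handle part (i). Since $\Gi{\bullet}$ is linearly dominated by $\Gii{\bullet}$, part~\ref{part:lin-dom1} of \Cref{prop:lin-dom} yields $\Dim(\Gi{\bullet}) \le \Dim(\Gii{\bullet})$; swapping the roles of the two filtrations and applying the same statement gives the reverse inequality, so the dimensions agree. With this equality of dimensions in hand, parts (ii) and (iii) of the corollary reduce to checking implications in both directions. For (ii), apply part~\ref{part:lin-dom1.5} of \Cref{prop:lin-dom}: if $\e(\Gii{\bullet}) < \infty$, then $\e(\Gi{\bullet}) < \infty$, and the symmetric application gives the converse. For (iii), I would argue by contrapositive: part~\ref{part:lin-dom2} of \Cref{prop:lin-dom} states that $\e(\Gii{\bullet}) = 0$ forces $\e(\Gi{\bullet}) = 0$, so if $\e(\Gi{\bullet}) > 0$ then $\e(\Gii{\bullet}) > 0$; again, swapping the two filtrations yields the reverse implication.

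Finally, for the shift-equivalence claim, I would apply part~\ref{part:lin-dom3} of \Cref{prop:lin-dom} in both directions. Shift equivalence implies linear equivalence (since each filtration contains $\kk$ in degree $0$, shift domination implies linear domination, as noted in the paragraph after \Cref{DefEquivalenceFiltrations}), so we already know the two dimensions are equal; hence the hypothesis of part~\ref{part:lin-dom3} is met. Thus $\e(\Gi{\bullet}) \le \e(\Gii{\bullet})$ and, by symmetry, $\e(\Gii{\bullet}) \le \e(\Gi{\bullet})$, giving equality.

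There is no real obstacle here; the only point worth being careful about is ensuring that in each application of parts~\ref{part:lin-dom1.5}, \ref{part:lin-dom2}, and~\ref{part:lin-dom3} of \Cref{prop:lin-dom}, the dimension-equality hypothesis has already been established from part (i) of the corollary. Once that ordering is respected, every claim is an immediate two-sided application of the previous proposition.
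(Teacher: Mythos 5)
Your proof is correct and is precisely the argument the paper intends: the corollary is stated as an immediate consequence of \Cref{prop:lin-dom}, obtained by applying each part of that proposition in both directions and using the dimension equality from part (i) before invoking the multiplicity statements. The only minor imprecision is your parenthetical claim that shift domination implies linear domination because ``each filtration contains $\kk$ in degree $0$''---this is only guaranteed for algebra filtrations, not module filtrations---but it is harmless here, since shift domination directly gives $\dim_\kk \Gi{i} \le \dim_\kk \Gii{i+j}$ and hence the dimension comparison needed for part (iv), the degree-$0$ term being irrelevant to the limits defining dimension and multiplicity.
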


We note that the dimension of a filtration on an algebra (or module) necessarily depends on the choice of filtration, rather than solely on the algebra (or module).

\begin{proposition}\label{prop:any-larger-dim}
   Let $\filtration{F}{\bullet}{}$ be a filtration on a $\kk$-algebra $A$, or on a module $M$, with $\Dim(\filtration{F}{\bullet}{})=d\in\RR_{>0}$.
   Then, for every real number $\lambda\geq d$ there exists a filtration $\filtration{G}{\bullet}{}$ on $A$, or on $M$, such that $\Dim(\filtration{G}{\bullet}{})=\lambda$.
\end{proposition}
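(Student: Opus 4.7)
The plan is to obtain $\filtration{G}{\bullet}{}$ from $\filtration{F}{\bullet}{}$ by reindexing along a power function so that the asymptotic growth rate of the dimensions becomes $\lambda$ rather than $d$. Concretely, I would set $\alpha := \lambda/d \geq 1$ and define
\[ \filtration{G}{i}{} := \filtration{F}{\lfloor i^\alpha \rfloor}{} \qquad (i \in \NN). \]
Since $i \mapsto \lfloor i^\alpha \rfloor$ is nondecreasing, unbounded, and vanishes at $0$, the chain $\filtration{G}{\bullet}{}$ is automatically an ascending, exhaustive filtration by finite-dimensional $\kk$-subspaces, with $\filtration{G}{0}{} = \filtration{F}{0}{} = \kk$ in the algebra case.

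The multiplicative compatibility hinges on two elementary inequalities valid for $\alpha \geq 1$ and $i, j \in \NN$: the super-additivity $(i+j)^\alpha \geq i^\alpha + j^\alpha$ (a consequence of convexity of $x \mapsto x^\alpha$ together with its vanishing at $0$), and the bound $i^\alpha \geq i$. Passing to floors, these yield
\[ \lfloor i^\alpha \rfloor + \lfloor j^\alpha \rfloor \,\leq\, \lfloor (i+j)^\alpha \rfloor \qquad\text{and}\qquad i + \lfloor j^\alpha \rfloor \,\leq\, \lfloor (i+j)^\alpha \rfloor, \]
which deliver $\filtration{G}{i}{}\filtration{G}{j}{} \subseteq \filtration{G}{i+j}{}$ in the algebra case and $\filtration{F}{i}{}\filtration{G}{j}{} \subseteq \filtration{G}{i+j}{}$ in the module case, confirming that $\filtration{G}{\bullet}{}$ is a bona fide filtration.

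To identify the dimension, I would prove each inequality separately. The upper bound $\Dim(\filtration{G}{\bullet}{}) \leq \lambda$ is direct: for any $t > \lambda$, writing $s := t/\alpha > d$ expresses
\[ \frac{\dim_\kk \filtration{G}{i}{}}{i^t} \,=\, \frac{\dim_\kk \filtration{F}{\lfloor i^\alpha \rfloor}{}}{\lfloor i^\alpha \rfloor^s} \cdot \frac{\lfloor i^\alpha \rfloor^s}{i^{\alpha s}}, \]
whose first factor tends to $0$ (because $s > d$) and whose second tends to $1$. The lower bound takes a bit more care: assuming for contradiction that $\dim_\kk \filtration{G}{i}{}/i^t \to 0$ for some $t < \lambda$, I would pass to the cofinal sequence $i_n := \lceil n^{1/\alpha} \rceil$, which satisfies $\lfloor i_n^\alpha \rfloor \geq n$ and $i_n \leq 2n^{1/\alpha}$ for $n \geq 1$. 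This forces $\dim_\kk \filtration{F}{n}{}/n^{t/\alpha} \to 0$ with $t/\alpha < d$, contradicting $\Dim(\filtration{F}{\bullet}{}) = d$.

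The main obstacle I anticipate is precisely this lower bound. When $\alpha > 1$, the reindexing $i \mapsto \lfloor i^\alpha \rfloor$ misses arbitrarily long stretches of integers, so one cannot simply invert the correspondence to read off the growth of $\filtration{F}{\bullet}{}$ from that of $\filtration{G}{\bullet}{}$ at each index. The cofinal-sequence device $i_n = \lceil n^{1/\alpha} \rceil$ is what bypasses this gap, relating the two growth rates up to multiplicative constants that are harmless in the dimension computation.
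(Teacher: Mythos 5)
Your proposal is correct and follows essentially the same route as the paper: the same reindexed filtration $\filtration{G}{i}{}=\filtration{F}{\lfloor i^{\lambda/d}\rfloor}{}$, the same floor inequalities for compatibility, and the same dimension computation, with your lower bound merely packaged as a contradiction via the cofinal sequence $\lceil n^{1/\alpha}\rceil$ instead of the paper's direct exhibition of large indices $j$ with $\dim_\kk \filtration{G}{j}{}>j^{st}$.
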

\begin{proof}
   Fix $\lambda \geq d$; set $s=\lambda/d \ge 1$ and define $\filtration{G}{i}{}=\filtration{F}{\lfloor i^{s} \rfloor}{}$.
   It is apparent that $\filtration{G}{\bullet}{}$ is ascending, exhaustive, and finite dimensional, and $\filtration{G}{0}{} = \filtration{F}{0}{}$, which equals $\kk$ in the algebra case.
   The fact that $\filtration{G}{\bullet}{}$ is compatible with multiplication follows from the inequality $\lfloor i^{s} \rfloor + \lfloor j^{s} \rfloor \leq \lfloor (i+j)^{s} \rfloor$ in the algebra case, or from the inequality $i+\lfloor j^{s} \rfloor \leq \lfloor (i+j)^{s} \rfloor$ in the module case.

   To compute the dimension of $\filtration{G}{\bullet}{}$, note first that for every $t\ge 0$, if $\dim_{\kk} \filtration{F}{i}{} \leq i^t$ for all $i\gg 0$, then $\dim_{\kk} \filtration{G}{i}{} = \dim_{\kk} \filtration{F}{\lfloor i^{s} \rfloor}{} \leq i^{st}$ for all $i\gg 0$.
   Consequently, $\Dim(\filtration{G}{\bullet}{})\leq s\Dim(\filtration{F}{\bullet}{}) = \lambda$.
   For the reverse inequality, it suffices to show that for any $t<d$ and any $N$, there exists $j>N$ such that $\dim_{\kk} \filtration{G}{j}{} > j^{st}$.
   Fix such $t$ and $N$.
   As $t < d = \Dim(\filtration{F}{\bullet}{})$, we can find $i>N^{s}$ with $\dim_{\kk} \filtration{F}{i}{} > i^{t}(1+1/N)^{st}$. Let $j\in \NN$ be such that $(j-1)^{s} \leq i < j^{s}$ so that, in particular, $j> N$.
   We then have
	\[ \dim_{\kk} \filtration{G}{j}{} \geq \dim_\kk \filtration{F}{i}{} > i^t (1+1/N)^{st} \geq (j-1)^{st}(1+1/N)^{st}\geq j^{st}\]
	as required.
	\end{proof}

\subsection{Finitely generated modules}

\begin{definition}
   Let $(A,\filtration{F}{\bullet}{})$ be a filtered $\kk$-algebra and $M$ a finitely generated left (or right) $A$-module.
   We say that a filtration $\filtration{G}{\bullet}{}$ on $M$ is \emph{standard} if there exists a generating set $v_1,\ldots,v_\ell$ of $M$ such that $\filtration{G}{i}{}=\filtration{F}{i}{}\{v_1,\ldots,v_\ell\}$ (or $\filtration{G}{i}{}=\{v_1,\ldots,v_\ell\}\filtration{F}{i}{}$) for all $i\in \NN$.
   A filtration $\filtration{G}{\bullet}{}$ on $M$ is a \emph{good filtration} if it is shift equivalent to a standard filtration.
\end{definition}

We caution the reader that the term ``standard filtration'' is used for other related notions in the literature; see \Cref{def:algebra-std-filtration}.
We also point out that the notions of standard filtration and of good filtration on a module are dependent on the choice of the filtration $\filtration{F}{\bullet}{}$ on $A$.

\renewcommand{\Gi}[1]{\filtration{G}{#1}{}} 
\renewcommand{\Gii}[1]{\filtration{H}{#1}{}}
\begin{proposition}\label{prop:compare-good}
   Let $(A,\filtration{F}{\bullet}{})$ be a filtered $\kk$-algebra, and $M$ a finitely generated left \textup(or right\textup) $A$-module.
   Let $\Gi{\bullet}$ and $\Gii{\bullet}$ be filtrations on $M$ compatible with~$\filtration{F}{\bullet}{}$.
   If $\Gi{\bullet}$ is a good filtration, then $\Gi{\bullet}$ is shift dominated by $\Gii{\bullet}$, and consequently $\Dim(\Gi{\bullet})\leq \Dim(\Gii{\bullet})$, and $\e(\Gi{\bullet})\leq \e(\Gii{\bullet})$ whenever these dimensions agree.
   Consequently, if both $\Gi{\bullet}$ and $\Gii{\bullet}$ are good filtrations, then $\Gi{\bullet}$ and $\Gii{\bullet}$ are shift equivalent, $\Dim(\Gi{\bullet})=\Dim(\Gii{\bullet})$, and $\e(\Gi{\bullet})= \e(\Gii{\bullet})$.
\end{proposition}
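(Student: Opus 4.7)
The plan is to reduce to the case in which $\Gi{\bullet}$ is itself standard, and then use exhaustiveness of $\Gii{\bullet}$ to absorb the finite generating set into a single level of $\Gii{\bullet}$. More precisely, since $\Gi{\bullet}$ is shift equivalent to some standard filtration $\Gi{\bullet}'$, and shift domination is transitive, it suffices to show that a standard filtration is shift dominated by any compatible filtration $\Gii{\bullet}$; the statement for $\Gi{\bullet}$ then follows by composing shift dominations.

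So fix generators $v_1,\ldots,v_\ell$ of $M$ with $\Gi{i}' = \filtration{F}{i}{}\{v_1,\ldots,v_\ell\}$ in the left module case (the right module case is analogous with the order of multiplication reversed). Because $\Gii{\bullet}$ is exhaustive and $v_1,\ldots,v_\ell$ are finitely many, there exists some $j_0\in\NN$ with $v_1,\ldots,v_\ell \in \Gii{j_0}$. Using compatibility of $\Gii{\bullet}$ with $\filtration{F}{\bullet}{}$, for every $i\in \NN$ we get
\[
\Gi{i}' = \filtration{F}{i}{}\{v_1,\ldots,v_\ell\} \subseteq \filtration{F}{i}{}\Gii{j_0} \subseteq \Gii{i+j_0},
\]
which shows that $\Gi{\bullet}'$ is shift dominated by $\Gii{\bullet}$. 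Composing with the shift domination of $\Gi{\bullet}$ by $\Gi{\bullet}'$ yields the claim that $\Gi{\bullet}$ is shift dominated by $\Gii{\bullet}$.

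Now shift domination, as noted in the remarks following \Cref{DefEquivalenceFiltrations}, implies linear domination for $i \gg 0$ (taking $C = j+1$ so that $i+j \leq Ci$ for all $i \geq 1$), which is enough to apply \Cref{prop:lin-dom}\ref{part:lin-dom1} and conclude $\Dim(\Gi{\bullet}) \leq \Dim(\Gii{\bullet})$. When these dimensions are equal, \Cref{prop:lin-dom}\ref{part:lin-dom3} directly gives $\e(\Gi{\bullet}) \leq \e(\Gii{\bullet})$ from the shift domination.

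Finally, if both $\Gi{\bullet}$ and $\Gii{\bullet}$ are good, the argument is symmetric, so each is shift dominated by the other, that is, they are shift equivalent; the equalities $\Dim(\Gi{\bullet}) = \Dim(\Gii{\bullet})$ and $\e(\Gi{\bullet}) = \e(\Gii{\bullet})$ are then immediate from \Cref{prop:lin-equiv}. There is no real obstacle here beyond organizing the reduction to the standard case cleanly; the only mild subtlety is that, in the module setting, one does not have $\Gi{0}=\Gii{0}$, but this is irrelevant since only the asymptotic behavior enters the definitions of $\Dim$ and $\e$.
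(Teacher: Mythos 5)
Your proof is correct and follows essentially the same route as the paper: reduce to a standard filtration via transitivity of shift domination, absorb the finitely many generators into a single level of the second filtration using exhaustiveness, and conclude via the earlier comparison results for (shift/linear) domination. Your explicit remark about the $i=0$ level in the module case is a slightly more careful treatment of a point the paper leaves implicit, but it does not change the argument.
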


\begin{proof}
   Without loss of generality, take $M$ to be a left $A$-module.
   Since $\Gi{\bullet}$ is shift equivalent, and hence shift dominated, by a standard filtration, transitivity of shift domination allows us to assume that $\Gi{\bullet}$ is standard; say $\Gi{\bullet}=\filtration{F}{\bullet}{}\{v_1,\ldots,v_\ell\}$.
   If $j\in\PP$ is such that $\{v_1,\ldots,v_\ell\}\subseteq \Gii{j}$, then we have 
   \[\Gi{i}=\filtration{F}{i}{}\{v_1,\ldots,v_\ell\} \subseteq \filtration{F}{i}{} \Gii{j} \subseteq \Gii{i+j}\]
   for each $i\in \NN$, so $\Gi{\bullet}$ is shift dominated by $\Gii{\bullet}$.
   The remaining claims now follow from \Cref{prop:lin-dom,prop:lin-equiv}.
\end{proof}

\begin{definition}\label{def:module-mult-alg-filt}
   Let $(A,\filtration{F}{\bullet}{})$ be a filtered $\kk$-algebra, and $M$ a finitely generated $A$-module.
   We define $\Dim(M,\filtration{F}{\bullet}{})\coloneqq \Dim(\filtration{G}{\bullet}{})$ and $\e(M,\filtration{F}{\bullet}{})\coloneqq \e(\filtration{G}{\bullet}{})$ for a good filtration $\filtration{G}{\bullet}{}$ of $M$ compatible with $\filtration{F}{\bullet}{}$. 
\end{definition}

Note that by \Cref{prop:compare-good}, the dimension and multiplicity of a finitely generated module over a filtered $\kk$-algebra do not depend on the choice of the good filtration~$\filtration{G}{\bullet}{}$.
Our next result shows that both the dimension, and the positivity and finiteness of the multiplicity depend only on the linear equivalence class of the filtration on~$A$.

\begin{proposition}\label{prop:lin-equiv-alg-mod}
   Let $A$ be a $\kk$-algebra with filtrations $\filtration{F}{\bullet}{1}$ and $\filtration{F}{\bullet}{2}$, and $M$ a finitely generated left \textup(or right\textup) $A$-module.
   Let $\filtration{G}{\bullet}{1}$ and $\filtration{G}{\bullet}{2}$ be good filtrations on $M$ compatible, respectively, with $\filtration{F}{\bullet}{1}$ and $\filtration{F}{\bullet}{2}$.
   If $\filtration{F}{\bullet}{1}$ is linearly dominated by $\filtration{F}{\bullet}{2}$, then $\filtration{G}{\bullet}{1}$ is linearly dominated by $\filtration{G}{\bullet}{2}$.
   Consequently, $\Dim(\filtration{G}{\bullet}{1})\leq \Dim(\filtration{G}{\bullet}{2})$, and when equality holds, if $\e(\filtration{G}{\bullet}{2})<\infty$ then $\e(\filtration{G}{\bullet}{1})<\infty$, and if $\e(\filtration{G}{\bullet}{2})=0$ then $\e(\filtration{G}{\bullet}{1})=0$.
\end{proposition}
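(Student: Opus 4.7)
The plan is to reduce the problem from arbitrary good filtrations to standard filtrations with a common generating set, and then transport linear domination across the algebra filtrations. Since $M$ is finitely generated, pick a single finite generating set $v_1,\ldots,v_\ell$ of $M$, and define the associated standard filtrations with respect to $\filtration{F}{\bullet}{1}$ and $\filtration{F}{\bullet}{2}$, namely
\[
   \filtration{S}{i}{1} \coloneqq \filtration{F}{i}{1}\{v_1,\ldots,v_\ell\}, \qquad \filtration{S}{i}{2} \coloneqq \filtration{F}{i}{2}\{v_1,\ldots,v_\ell\}.
\]
By \Cref{prop:compare-good}, any two good filtrations on $M$ compatible with the same algebra filtration are shift equivalent, so $\filtration{G}{\bullet}{j}$ is shift equivalent to $\filtration{S}{\bullet}{j}$ for $j=1,2$.

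Next I would use the hypothesis: there exists $C\in\PP$ with $\filtration{F}{i}{1}\subseteq \filtration{F}{Ci}{2}$ for all $i\in\NN$. Multiplying by the generating set on the right (or left) gives
\[
   \filtration{S}{i}{1} = \filtration{F}{i}{1}\{v_1,\ldots,v_\ell\} \subseteq \filtration{F}{Ci}{2}\{v_1,\ldots,v_\ell\} = \filtration{S}{Ci}{2},
\]
showing that $\filtration{S}{\bullet}{1}$ is linearly dominated by $\filtration{S}{\bullet}{2}$. Chaining this with the shift (and hence linear) equivalences $\filtration{G}{\bullet}{j}\sim \filtration{S}{\bullet}{j}$ and invoking transitivity of linear domination yields the desired conclusion that $\filtration{G}{\bullet}{1}$ is linearly dominated by $\filtration{G}{\bullet}{2}$.

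The final statements about $\Dim$ and $\e$ are then immediate from \Cref{prop:lin-dom}, which already packages exactly the three conclusions needed (dimension inequality, preservation of finite multiplicity, and preservation of vanishing multiplicity) from linear domination.

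I do not expect any real obstacle here: the only delicate point is the freedom to replace the generating sets underlying $\filtration{G}{\bullet}{1}$ and $\filtration{G}{\bullet}{2}$ by a common generating set, which is legitimate precisely because \Cref{prop:compare-good} guarantees that the notion of good filtration is insensitive to this choice. Everything else is bookkeeping with the inclusion $\filtration{F}{i}{1}\subseteq \filtration{F}{Ci}{2}$ and the transitivity properties of shift and linear domination noted after \Cref{DefEquivalenceFiltrations}.
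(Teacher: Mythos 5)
Your proposal is correct and follows essentially the same route as the paper's proof: use \Cref{prop:compare-good} to replace both good filtrations by standard filtrations built from a common generating set, transport the inclusion $\filtration{F}{i}{1}\subseteq\filtration{F}{Ci}{2}$ through that generating set, and then quote \Cref{prop:lin-dom} for the statements about dimension and multiplicity. The only difference is that you spell out the chaining of shift equivalences with the linear domination of the standard filtrations, which the paper leaves implicit.
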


\begin{proof}
   \Cref{prop:compare-good} allows us to replace $\filtration{G}{\bullet}{1}$ and $\filtration{G}{\bullet}{2}$ by any good filtrations.
   Thus, we may assume that $\filtration{G}{\bullet}{1}$ and $\filtration{G}{\bullet}{2}$ are standard filtrations corresponding to the same generating set $\{v_1,\dots,v_\ell\}$ of $M$, in which case the first claim follows.
   The remaining claims follow at once from \Cref{prop:lin-dom}.
\end{proof}

\subsection{Finitely generated algebras}

\begin{definition}\label{def:algebra-std-filtration}
   Let $A$ be a finitely generated $\kk$-algebra.
   We say that a filtration $\filtration{F}{\bullet}{}$ on $A$ is \emph{standard} if there exists a generating set $v_1,\ldots,v_\ell$ of $A$ such that $\filtration{F}{1}{}=\kk\{v_1,\ldots,v_\ell\}$ and for each $i\in \PP$, $\filtration{F}{i}{}=(\filtration{F}{1}{})^i$, the $\kk$-subspace generated by monomials in $v_1,\ldots,v_\ell$ of degree $\le i$.
\end{definition}

The following proposition is the analogue of \Cref{prop:compare-good} for finitely generated $\kk$-algebras, where the role of shift domination is now played by linear domination.

\renewcommand{\Gi}[1]{\filtration{F}{#1}{}} 
\renewcommand{\Gii}[1]{\filtration{G}{#1}{}}
\begin{proposition}\label{prop:algebra-std-filtrations}
   Let $A$ be a finitely generated $\kk$-algebra, with filtrations $\Gi{\bullet}$ and $\Gii{\bullet}$.
   If $\Gi{\bullet}$ is a standard filtration, then $\Gi{\bullet}$ is linearly dominated by $\Gii{\bullet}$, and thus $\Dim(\Gi{\bullet})\leq \Dim(\Gii{\bullet})$.
   If both $\Gi{\bullet}$ and $\Gii{\bullet}$ are standard filtrations, then $\Gi{\bullet}$ and $\Gii{\bullet}$ are linearly equivalent, and consequently, $\Dim(\Gi{\bullet})=\Dim(\Gii{\bullet})$, $\e(\Gi{\bullet})<\infty$ if and only if $\e(\Gii{\bullet})<\infty$, and $\e(\Gi{\bullet})>0$ if and only if $\e(\Gii{\bullet})>0$.
\end{proposition}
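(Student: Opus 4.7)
The plan is to verify linear domination directly from the definition of a standard filtration, and then to invoke \Cref{prop:lin-dom,prop:lin-equiv}. Assume $\Gi{\bullet}$ is standard, with $\Gi{1} = \kk\{v_1, \ldots, v_\ell\}$ and $\Gi{i} = (\Gi{1})^i$ for all $i \in \PP$. Since $\Gii{\bullet}$ is exhaustive and each $\Gii{j}$ is finite dimensional, each $v_j$ lies in some $\Gii{C_j}$; setting $C = \max_j C_j$ and using that $\Gii{\bullet}$ is ascending with $\Gii{0} = \kk$, I obtain $\Gi{1} \subseteq \Gii{C}$.

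Next, I would prove by induction on $i$ that $\Gi{i} \subseteq \Gii{Ci}$. The base cases $i = 0, 1$ are immediate, and the inductive step uses the compatibility of $\Gii{\bullet}$ with multiplication to get
\[ \Gi{i+1} = \Gi{1} \cdot \Gi{i} \subseteq \Gii{C} \cdot \Gii{Ci} \subseteq \Gii{C(i+1)}. \]
This establishes that $\Gi{\bullet}$ is linearly dominated by $\Gii{\bullet}$, and the inequality $\Dim(\Gi{\bullet}) \leq \Dim(\Gii{\bullet})$ then follows from the first part of \Cref{prop:lin-dom}.

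For the second statement, when both $\Gi{\bullet}$ and $\Gii{\bullet}$ are standard, applying the first assertion symmetrically yields linear equivalence. The equality of dimensions together with the simultaneous finiteness and positivity of multiplicities then follow from \Cref{prop:lin-equiv}.

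The argument is essentially bookkeeping: every element of $\Gi{i}$ is a polynomial of degree at most $i$ in the generators $v_1, \ldots, v_\ell$, so its filtration level in any compatible filtration is bounded by a linear function of $i$. I do not anticipate a serious obstacle; the only minor point is ensuring $\Gi{1} \subseteq \Gii{C}$ and not merely $\{v_1, \ldots, v_\ell\} \subseteq \Gii{C}$, which is handled by the inclusion $\kk = \Gii{0} \subseteq \Gii{C}$.
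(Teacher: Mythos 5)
Your proof is correct and follows essentially the same route as the paper: fix $C$ with $\Gi{1}\subseteq \Gii{C}$, deduce $\Gi{i}=(\Gi{1})^i\subseteq(\Gii{C})^i\subseteq\Gii{Ci}$ (your induction is just this computation spelled out), and then cite \Cref{prop:lin-dom,prop:lin-equiv}. The extra care you take with $\kk=\Gii{0}\subseteq\Gii{C}$ is a fine, if minor, point that the paper leaves implicit.
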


\begin{proof}
   For the first claim, we fix generators $v_1,\ldots,v_\ell$ such that $\Gi{1}=\kk\{v_1,\ldots,v_\ell\}$ and $\Gi{i}=(\Gi{1})^i$ for all $i\in \PP$.
   Let $C\in \PP$ be such that $\{v_1,\dots,v_\ell\} \subseteq \Gii{C}$, so $\Gi{1}\subseteq \Gii{C}$.
   Then for each $i\in \PP$ we have 
   \[\Gi{i} =(\Gi{1})^i \subseteq (\Gii{C})^i \subseteq \Gii{Ci}.\]
   The remaining claims now follow from \Cref{prop:lin-dom,prop:lin-equiv}.
\end{proof}

Let $A$ be a finitely generated $\kk$-algebra, and $\filtration{F}{\bullet}{}$ a standard filtration on $A$.
The Gelfand--Kirillov dimension of $A$ is $\operatorname{GK}(A)\coloneqq \Dim(\filtration{F}{\bullet}{})$, which does not depend on the choice of the standard filtration $\filtration{F}{\bullet}{}$ by \Cref{prop:algebra-std-filtrations}.
For a non-finitely generated $\kk$-algebra~$A$, the Gelfand--Kirillov dimension is defined as 
\[ \operatorname{GK}(A) \coloneqq \sup \left\{ \operatorname{GK}(A') \ \big| \ A'\text{ is a finitely generated $\kk$-subalgebra of }A \right\}. \]

\subsection{Finitely generated commutative associated graded algebras}

Consider a filtered $\kk$-algebra $(A,\filtration{F}{\bullet}{})$, and let $\gr(\filtration{F}{\bullet}{})=\bigoplus_{i\geq 0} \filtration{F}{i}{}/\filtration{F}{i-1}{}$ be its associated graded ring.
Given an $(A,\filtration{F}{\bullet}{})$-module $(M,\filtration{G}{\bullet}{})$, the associated graded module $\gr(\filtration{G}{\bullet}{})=\bigoplus_{i\geq 0} \filtration{G}{i}{}/\filtration{G}{i-1}{}$ is a $\gr(\filtration{F}{\bullet}{})$-module.

In this subsection, we focus on the case where $\gr(\filtration{F}{\bullet}{})$ is a finitely generated commutative $\kk$-algebra.

\begin{proposition}\label{prop:good-filtration First Noeth case}
   Let $(A,\filtration{F}{\bullet}{})$ be a filtered $\kk$-algebra.
   Suppose that the associated graded ring $\gr(\filtration{F}{\bullet}{})$ is a finitely generated commutative $\kk$-algebra.
   Let $M$ be a left or right $A$-module.
   A filtration $\filtration{G}{\bullet}{}$ on $M$ compatible with $\filtration{F}{\bullet}{}$ is a good filtration if and only if $\gr(\filtration{G}{\bullet}{})$ is a finitely generated $\gr(\filtration{F}{\bullet}{})$-module.
   Furthermore, any lift of a generating set for $\gr(\filtration{G}{\bullet}{})$ is a generating set for $M$.
\end{proposition}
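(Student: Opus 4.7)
The plan is to prove the two directions separately; the ``furthermore'' clause falls out of the $(\Leftarrow)$ direction.

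For $(\Leftarrow)$, suppose $\gr(\filtration{G}{\bullet}{})$ is finitely generated by homogeneous elements $\bar v_1,\ldots,\bar v_\ell$ in degrees $d_1,\ldots,d_\ell$.  I would lift each $\bar v_k$ to some $v_k \in \filtration{G}{d_k}{}$ and prove, by induction on $i$, the identity
\[
\filtration{G}{i}{} \; = \; \sum_{k=1}^{\ell}\, \filtration{F}{i-d_k}{}\, v_k.
\]
In the induction step, given $x \in \filtration{G}{i}{}$, the class of $x$ in $\gr(\filtration{G}{\bullet}{})_i$ decomposes as $\sum_k \bar a_k\, \bar v_k$ with $\bar a_k \in \gr(\filtration{F}{\bullet}{})_{i-d_k}$; lifting each $\bar a_k$ to $a_k \in \filtration{F}{i-d_k}{}$ places $x - \sum_k a_k v_k$ inside $\filtration{G}{i-1}{}$, and the induction hypothesis finishes the step.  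Using $\filtration{F}{i-d_k}{} \subseteq \filtration{F}{i}{}$ and $\filtration{F}{i}{}\, v_k \subseteq \filtration{G}{i+d_k}{}$, the displayed identity produces inclusions $\filtration{G}{i}{} \subseteq \filtration{F}{i}{}\{v_1,\ldots,v_\ell\} \subseteq \filtration{G}{i+D}{}$ for all $i$, where $D = \max_k d_k$, which exhibits shift equivalence of $\filtration{G}{\bullet}{}$ with a standard filtration and hence goodness.  The identity also gives $M = \bigcup_i \filtration{G}{i}{} = \sum_k A\, v_k$, establishing the final sentence of the proposition.

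For $(\Rightarrow)$, suppose $\filtration{G}{\bullet}{}$ is shift equivalent to a standard filtration $\filtration{G}{\bullet}{s} = \filtration{F}{\bullet}{}\{w_1,\ldots,w_m\}$.  I would translate the problem into the Rees construction: set $R = \bigoplus_{i\geq 0} \filtration{F}{i}{}\, t^i \subseteq A[t]$, and consider the $R$-modules $R_G = \bigoplus_{i\geq 0} \filtration{G}{i}{}\, t^i$ and $R_{G^s} = \bigoplus_{i\geq 0} \filtration{G}{i}{s}\, t^i$, noting that $R/tR \cong \gr(\filtration{F}{\bullet}{})$ and $R_G/tR_G \cong \gr(\filtration{G}{\bullet}{})$.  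Three observations drive the argument: (a) $R$ is left Noetherian, by a standard symbol-lifting argument that uses that $R/tR$ is a finitely generated commutative $\kk$-algebra (hence Noetherian) and that $t$ is central in $R$; (b) $R_{G^s} = \sum_k R\, w_k$ is a finitely generated $R$-module; (c) the shift inclusion $\filtration{G}{i}{} \subseteq \filtration{G}{i+j}{s}$ becomes $t^j R_G \subseteq R_{G^s}$ inside $M[t]$, so $t^j R_G$, and hence also $R_G$ (via the $R$-linear isomorphism given by multiplication by the central element $t^j$), is a finitely generated $R$-module.  Reducing modulo $t$ then yields finite generation of $\gr(\filtration{G}{\bullet}{})$ over $\gr(\filtration{F}{\bullet}{})$.

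The main subtlety is observation (a) in the forward direction: $R$ need not be commutative, so Hilbert's basis theorem does not apply directly.  The argument nonetheless goes through by the classical symbol-filtration technique, since the symbol map sends an ascending chain of left ideals of $R$ to an ascending chain in the Noetherian commutative ring $\gr(\filtration{F}{\bullet}{})$, and centrality of $t$ ensures that the induction on $t$-adic order behaves just as in the commutative setting.
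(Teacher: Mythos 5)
Your proof is correct, and it splits naturally into a half that matches the paper and a half that does not. The implication ``$\gr(\filtration{G}{\bullet}{})$ finitely generated $\Rightarrow$ $\filtration{G}{\bullet}{}$ good,'' together with the furthermore clause, is the same successive-approximation argument the paper gives (your induction on $i$ is just the paper's iterated descent $x^{(1)}, x^{(2)},\dots$ packaged as an induction hypothesis). The converse is where you genuinely diverge: the paper checks directly that a \emph{standard} filtration has finitely generated associated graded module and then handles an arbitrary good filtration by citing van den Essen's Lemma~1.12 for the shift-equivalence step, whereas you make that step self-contained via the Rees construction, observing that $t^jR_G$ is a (graded) submodule of the finitely generated module $R_{G^s}=\sum_k R\,w_k$ over the Rees ring $R$, so that Noetherianity of $R$ gives finite generation of $R_G$, and reduction modulo the central element $t$ gives finite generation of $\gr(\filtration{G}{\bullet}{})$. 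What your route buys is independence from the cited lemma; what it costs is that you must supply the Noetherianity of the Rees ring, which is essentially the content hidden in that citation.

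One caution on your step (a), which you rightly flag as the crux: centrality of $t$ plus Noetherianity of $R/tR\cong\gr(\filtration{F}{\bullet}{})$ is not by itself enough to force $R$ to be left Noetherian, and ``the symbol map sends an ascending chain of left ideals to an ascending chain in $\gr(\filtration{F}{\bullet}{})$'' does not literally transfer the ascending chain condition, since nested ideals can share the same symbol ideal. The classical argument works here because the filtration is positive: $R$ is $\NN$-graded with $R_0=\kk$ and $t$ homogeneous of degree one, so the $t$-adic order of a homogeneous element is bounded by its degree, the induction on $t$-adic order terminates, and since every module in your argument is graded, graded Noetherianity of $R$ suffices. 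With that role of the grading made explicit, the fact you invoke is indeed standard for positively filtered rings whose associated graded ring is commutative and Noetherian (it appears, e.g., in the theory of Zariskian filtrations and in standard $D$-module references), and your proof is complete.
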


\begin{proof}
   Without loss of generality, we suppose $M$ is a left $A$-module.
   Assume first that $\filtration{G}{\bullet}{}$ is a standard filtration on $M$, that is, $\filtration{G}{\bullet}{}=\filtration{F}{\bullet}{}\{v_1,\ldots,v_\ell\}$ for some generating set $v_1,\ldots,v_\ell$ of $M$.
   Let $\overline{u} \in \filtration{G}{i}{}/ \filtration{G}{i-1}{}$ be a nonzero homogeneous element of $\gr(\filtration{G}{\bullet}{})$, which is the class of $u\in \filtration{G}{i}{} \smallsetminus \filtration{G}{i-1}{}$.
   Since $u = \sum_{j=1}^\ell a_jv_j $ for some $a_j\in \filtration{F}{i}{}$, the associated graded module $\gr(\filtration{G}{\bullet}{})$ is generated as a $\gr(\filtration{F}{\bullet}{})$-module by the classes of the $v_j$.
   Any good filtration $\tildefiltration{G}{\bullet}{}$ is shift equivalent to a standard filtration, so using \cite[Lemma~1.12]{VdE86} we conclude that $\gr(\tildefiltration{G}{\bullet}{})$ is also a finitely generated $\gr(\filtration{F}{\bullet}{})$-module.

   Conversely, suppose $\gr(\filtration{G}{\bullet}{})$ is generated by homogeneous elements $\overline{u}_1, \dots, \overline{u}_s$ as a $\gr(\filtration{F}{\bullet}{})$-module, and let $u_j\in \filtration{G}{n_j}{}\smallsetminus \filtration{G}{n_j-1}{}$ be a lift of $\overline{u}_j$, for each $j$.
   Thus, any nonzero homogeneous element $\overline{u}\in \filtration{G}{i}{}/ \filtration{G}{i-1}{}$ can be written in the form $\overline{u}= \sum_{j=1}^s \overline{a_j}\overline{u_j}$ with $a_j\in \filtration{F}{i-n_j}{}$, and
   \[u-(a_1u_1 + \cdots +a_s u_s) \eqqcolon x^{(1)} \in \filtration{G}{i-1}{}.\]
   Applying the same argument to $x^{(1)}$, we can find $a_j^{(1)}\in \filtration{F}{i-n_j-1}{}$, for $j=1,\dots s$, such that $x^{(1)} - (a_1^{(1)}u_1 + \cdots +a_s^{(1)} u_s) \eqqcolon x^{(2)} \in \filtration{G}{i-2}{}$, and thus
   \[u=\big(a_1+a_1^{(1)}\big) u_1 + \cdots + \big(a_s + a_s^{(1)}\big) u_s + x^{(2)}.\]
   Repeating this process we eventually end up with some $x^{(j)}=0$, and thus
   \[\filtration{G}{i}{} = \filtration{F}{i- n_1}{} u_1 + \cdots + \filtration{F}{i- n_s}{} u_s\]
   showing that $\filtration{G}{\bullet}{}$ is shift equivalent to the standard filtration $\filtration{F}{\bullet}{}\{u_1,\ldots,u_s\}$.
   This construction also shows that any lift of a generating set for $\gr(\filtration{G}{\bullet}{})$ is a generating set for $M$.
\end{proof}

\begin{proposition}
   Let $(A,\filtration{F}{\bullet}{})$ be a filtered $\kk$-algebra.
   Suppose that the associated graded ring $\gr(\filtration{F}{\bullet}{})$ is a finitely generated commutative $\kk$-algebra.
   If $M$ is a finitely generated $A$-module, then $\Dim(M,\filtration{F}{\bullet}{})$ is an integer, and $\e(M,\filtration{F}{\bullet}{})$ is a positive rational number.
\end{proposition}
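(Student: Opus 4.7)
The strategy is to reduce the problem to the study of the Hilbert function of the associated graded module. Fix a good filtration $\filtration{G}{\bullet}{}$ on $M$ compatible with $\filtration{F}{\bullet}{}$. By \Cref{prop:good-filtration First Noeth case}, the associated graded module $N \coloneqq \gr(\filtration{G}{\bullet}{})$ is a finitely generated $\NN$-graded module over the finitely generated commutative graded $\kk$-algebra $R \coloneqq \gr(\filtration{F}{\bullet}{})$. \Cref{def:module-mult-alg-filt} reduces the problem to computing $\Dim(\filtration{G}{\bullet}{})$ and $\e(\filtration{G}{\bullet}{})$, and via the telescoping identity $\dim_\kk \filtration{G}{i}{} = \sum_{j=0}^{i} \dim_\kk N_j$, to understanding the asymptotics of the partial sums of the Hilbert function $j \mapsto \dim_\kk N_j$.

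The key input is the Hilbert--Serre theorem: writing $R$ as a quotient of $\kk[x_1,\ldots,x_n]$ with $\deg x_k = a_k \in \PP$, the Hilbert series $H_N(t) \coloneqq \sum_{j \geq 0} (\dim_\kk N_j)\, t^j$ equals $Q(t)/\prod_{k=1}^n(1-t^{a_k})$ for some $Q(t) \in \ZZ[t]$, whose pole at $t=1$ has order exactly $d \coloneqq \dim N$. Combined with the fact that Noether normalization forces $\dim_\kk N_j = O(j^{d-1})$, which bounds the pole order of $H_N(t)$ at every complex root of unity by $d$, a partial-fractions analysis yields that the Hilbert function is eventually quasi-polynomial: there exist $L \in \PP$ and polynomials $P_0, \ldots, P_{L-1} \in \QQ[x]$ of degree at most $d-1$ with nonnegative leading coefficients $c_0, \ldots, c_{L-1}$ not all zero, such that $\dim_\kk N_j = P_{j \bmod L}(j)$ for $j \gg 0$.

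Summing termwise and using $\sum_{j \leq i,\; j \equiv r\; (L)} j^{d-1} \sim i^{d}/(dL)$ for each residue $r$, we get, for $i \gg 0$,
\[
\dim_\kk \filtration{G}{i}{} \;=\; \frac{c_0 + c_1 + \cdots + c_{L-1}}{d\, L}\, i^{d} \;+\; O(i^{d-1}).
\]
This yields $\Dim(\filtration{G}{\bullet}{}) = d \in \NN$ and $\e(\filtration{G}{\bullet}{}) = (c_0 + \cdots + c_{L-1})/(dL) \in \QQ_{>0}$, as required. (The degenerate case $d = 0$ is immediate, since then $N$ has finite total $\kk$-dimension and $\dim_\kk \filtration{G}{i}{}$ is eventually equal to the positive integer $\dim_\kk M$.) The main technical step, and the principal obstacle, is invoking the Hilbert--Serre theorem in the non-standard graded setting, which forces the quasi-polynomial formulation rather than a simpler polynomial one; once this is in place, positivity of $\e$ follows from the order of the pole of $H_N(t)$ at $t=1$ matching the Krull dimension $d$, which forces at least one $c_r$ to be strictly positive.
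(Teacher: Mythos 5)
Your proof is correct and follows essentially the same route as the paper: choose a good filtration, invoke \Cref{prop:good-filtration First Noeth case} to see that the associated graded module is finitely generated over the commutative finitely generated graded ring $\gr(\filtration{F}{\bullet}{})$, and then apply Hilbert-function theory (quasipolynomiality in the non-standard graded setting) to the telescoping sums $\dim_\kk \filtration{G}{i}{} = \sum_{j\le i}\dim_\kk \gr(\filtration{G}{\bullet}{})_j$. The only difference is that you spell out the Hilbert--Serre/partial-fractions details and the summation asymptotics that the paper compresses into the phrase ``by the theory of Hilbert functions \ldots\ and both claims follow.''
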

     
\begin{proof}
   Let $\filtration{G}{\bullet}{}$ be a good filtration for $M$.
   Then $\gr(\filtration{G}{\bullet}{})$ is a finitely generated graded $\gr(\filtration{F}{\bullet}{})$-module by \Cref{prop:good-filtration First Noeth case}.
   By the theory of Hilbert functions on commutative finitely generated graded $\kk$-algebras, $\dim_{\kk} \filtration{G}{i}{} = \sum_{j=0}^i \dim_{\kk}(\gr(\filtration{G}{\bullet}{})_j)$ agrees with a quasipolynomial function of $i$ with rational coefficients, for $i \gg 0$, and both claims follow.
\end{proof}

%%%%%%%%%%%%%%%%%%%%%%%%%%%%%%%%%%%%%%%%%%
\section{Bernstein's inequality and holonomic modules} \label{SecBernsteinHol}
%%%%%%%%%%%%%%%%%%%%%%%%%%%%%%%%%%%%%%%%%%

In this section we recall the Bernstein inequality for what we call \emph{linearly simple algebras}.
The ideas behind this class of algebras can be found in Bavula's work \cite[Theorem~3.1]{Bavula}.
We rephrase his work for our purposes in the first part of \Cref{thm:Bern}, and include a proof for the convenience of the reader.
The second part of \Cref{thm:Bern} concerns multiplicities of filtered modules, which was not treated in Bavula's work in the generality needed in this manuscript. 

\begin{definition} \label{linearly_simple}
   A filtered $\kk$-algebra $(A,\filtration{F}{\bullet}{})$ is \emph{$C$-linearly simple} for some $C\in\PP$ if for each $i\in \NN$ and each $\delta \in \filtration{F}{i}{} \smallsetminus \{0\}$,
   \[ 1 \in \filtration{F}{Ci}{} \, \delta \, \filtration{F}{Ci}{}. \]
   We say that $(A,\filtration{F}{\bullet}{})$ is \emph{linearly simple} if it is $C$-linearly simple for some $C\in\PP$.
\end{definition}

\begin{remark}
   \label{rem: linear simplicity only depends on linear equivalence class}
%   \pedro{Added this remark}
   Linear simplicity of a filtered $\kk$-algebra depends only on the linear equivalence class of the filtration.
   Indeed, suppose $\filtration{F}{\bullet}{}$ and $\filtration{G}{\bullet}{}$ are linearly equivalent filtrations on a $\kk$-algebra $A$, with $\filtration{F}{i}{} \subseteq \filtration{G}{Ki}{}$ and $\filtration{G}{i}{} \subseteq \filtration{F}{Li}{}$ for each $i\in \NN$.
   If $(A,\filtration{F}{\bullet}{})$ is $C$-linearly simple, then one easily verifies that $(A,\filtration{G}{\bullet}{})$ is $KLC$-linearly simple. 
\end{remark}

The following key result implicitly appears in Bavula's work \cite[Proof of Theorem~3.1]{Bavula}.
We single it out because we use it to show new properties for holonomic modules in the generality we need.

\begin{lemma}\label{LemmaHomBernstein}
   Let $(A,\filtration{F}{\bullet}{})$ be a filtered $\kk$-algebra, and $(M,\filtration{G}{\bullet}{})$ a left $(A,\filtration{F}{\bullet}{})$-module.
   Suppose that $(A,\filtration{F}{\bullet}{})$ is $C$-linearly simple.
   Let $\Psi: \filtration{F}{i}{} \to \Hom_\kk(\filtration{G}{(C+1)i}{},\filtration{G}{(C+2)i}{})$ be defined by $\delta \mapsto \psi_\delta$, where $ \psi_\delta(v)=\delta v$.
   If $\filtration{G}{i}{}\neq 0$, then $\Psi$ is injective.
   The analogous result holds for right modules as well.
\end{lemma}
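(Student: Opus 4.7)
The plan is to verify well-definedness of $\Psi$ and then prove injectivity by contradiction, using linear simplicity to manufacture a contradiction from a nonzero element of the kernel.

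First, I would observe that the map $\Psi$ is well-defined: for $\delta \in \filtration{F}{i}{}$ and $v \in \filtration{G}{(C+1)i}{}$, compatibility of the filtration gives $\delta v \in \filtration{F}{i}{} \filtration{G}{(C+1)i}{} \subseteq \filtration{G}{(C+2)i}{}$, so $\psi_\delta$ does land in the claimed Hom space, and $\Psi$ is $\kk$-linear.

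Next, to establish injectivity, I would argue by contradiction. Suppose there exists a nonzero $\delta \in \filtration{F}{i}{}$ with $\psi_\delta = 0$, i.e.\ $\delta v = 0$ for every $v \in \filtration{G}{(C+1)i}{}$. By $C$-linear simplicity of $(A,\filtration{F}{\bullet}{})$, we have
\[
1 \in \filtration{F}{Ci}{} \, \delta \, \filtration{F}{Ci}{},
\]
so we may write $1 = \sum_j \alpha_j \, \delta \, \beta_j$ with $\alpha_j, \beta_j \in \filtration{F}{Ci}{}$. For any $w \in \filtration{G}{i}{}$ and each index $j$, we have
\[
\beta_j w \in \filtration{F}{Ci}{} \, \filtration{G}{i}{} \subseteq \filtration{G}{(C+1)i}{},
\]
and so $\delta(\beta_j w) = 0$ by hypothesis. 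Therefore
\[
w = 1 \cdot w = \sum_j \alpha_j \, \delta \, \beta_j w = 0.
\]
This forces $\filtration{G}{i}{} = 0$, contradicting the hypothesis. Hence $\Psi$ is injective.

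The right-module case is essentially identical, with the roles of $\alpha_j$ and $\beta_j$ exchanged: one uses that $w \beta_j \in \filtration{G}{i}{} \filtration{F}{Ci}{} \subseteq \filtration{G}{(C+1)i}{}$ and that right multiplication by $\delta$ annihilates $\filtration{G}{(C+1)i}{}$ to conclude $w = \sum_j w \beta_j \, \delta \, \alpha_j = 0$. There is no genuine obstacle here: the entire content of the argument is the happy interaction between the definition of $C$-linear simplicity and the bookkeeping of filtration indices, which was precisely engineered so that $\beta_j w$ lands in $\filtration{G}{(C+1)i}{}$, making the hypothesis $\psi_\delta = 0$ applicable.
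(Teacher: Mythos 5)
Your proof is correct and is essentially the same argument as the paper's: the paper proves the contrapositive directly, showing $\filtration{G}{i}{} \subseteq \filtration{F}{Ci}{}\,\delta\,\filtration{F}{Ci}{}\filtration{G}{i}{} = 0$ when $\psi_\delta = 0$, which is exactly your computation with the decomposition $1=\sum_j \alpha_j\delta\beta_j$ written out. The added well-definedness check and the right-module remark (which is just a relabeling of the factors in the decomposition) are fine.
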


\begin{proof}
We prove the contrapositive:
Suppose that there exists $\delta \in \filtration{F}{i}{} \smallsetminus \{0\}$ such that $ \psi_\delta=0$.
Then $\delta \filtration{F}{Ci}{} \filtration{G}{i}{}\subseteq \delta \filtration{G}{(C+1)i}{} = 0$, and thus $\delta \filtration{F}{Ci}{} \filtration{G}{i}{}=0$.
Since $1\in \filtration{F}{Ci}{} \delta \filtration{F}{Ci}{}$, we have $\filtration{G}{i}{} \subseteq \filtration{F}{Ci}{} \delta \filtration{F}{Ci}{} \filtration{G}{i}{} = 0$, and therefore $\filtration{G}{i}{} =0$.
\end{proof}

\begin{theorem}[Bernstein Inequality {\cite[Theorem~3.1]{Bavula}}]\label{thm:Bern}
   Let $(A,\filtration{F}{\bullet}{})$ be a filtered $\kk$-algebra with $\Dim(\filtration{F}{\bullet}{})<\infty$, and $(M,\filtration{G}{\bullet}{})$ a nontrivial left or right $(A,\filtration{F}{\bullet}{})$-module.
   Suppose that $(A,\filtration{F}{\bullet}{})$ is $C$-linearly simple.
   Then,
   \[\Dim(\filtration{G}{\bullet}{}) \geq \frac{1}{2}\Dim(\filtration{F}{\bullet}{}).\]
   Moreover, if $\theta\coloneqq \Dim(\filtration{G}{\bullet}{})=\frac{1}{2}\Dim(\filtration{F}{\bullet}{})$, then
   \[\e(\filtration{G}{\bullet}{})\geq \frac{\sqrt{ \e (\filtration{F}{\bullet}{}) }}{ (C+1)^{\theta/2}(C+2)^{\theta/2}}.\]
\end{theorem}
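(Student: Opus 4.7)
The plan is to derive both inequalities as direct numerical consequences of the injectivity statement in \Cref{LemmaHomBernstein}. Without loss of generality I will argue for left modules; the right-module case is symmetric. Since $M$ is nontrivial and $\filtration{G}{\bullet}{}$ is exhaustive, there is some $i_0$ with $\filtration{G}{i_0}{}\neq 0$; up to reindexing (which only changes things by shift equivalence and therefore does not affect dimension or multiplicity), I may assume $\filtration{G}{i}{}\neq 0$ for all $i\geq 0$, or equivalently just work with indices $i\geq i_0$ throughout. The lemma then gives, for every $i$, an injection
\[
\filtration{F}{i}{} \hookrightarrow \Hom_\kk\bigl(\filtration{G}{(C+1)i}{}, \filtration{G}{(C+2)i}{}\bigr),
\]
and since the codimensions are finite, this yields the fundamental numerical inequality
\[
\dim_\kk \filtration{F}{i}{} \;\leq\; \dim_\kk \filtration{G}{(C+1)i}{} \cdot \dim_\kk \filtration{G}{(C+2)i}{}
\qquad \text{for all } i \gg 0. \tag{$\star$}
\]

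For the dimension bound, let $e = \Dim(\filtration{G}{\bullet}{})$ and fix any real $t > e$. By definition of $\Dim$, there is a constant $K$ with $\dim_\kk \filtration{G}{n}{} \leq K n^t$ for all $n \gg 0$. Substituting into $(\star)$ gives
\[
\dim_\kk \filtration{F}{i}{} \;\leq\; K^2\, (C+1)^t (C+2)^t\, i^{2t}
\qquad \text{for all } i \gg 0,
\]
so $\Dim(\filtration{F}{\bullet}{}) \leq 2t$. Letting $t \searrow e$ forces $\Dim(\filtration{F}{\bullet}{}) \leq 2e$, which is exactly the Bernstein inequality $\Dim(\filtration{G}{\bullet}{}) \geq \tfrac12 \Dim(\filtration{F}{\bullet}{})$.

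For the multiplicity bound, assume $\theta = \Dim(\filtration{G}{\bullet}{}) = \tfrac12 \Dim(\filtration{F}{\bullet}{})$, so $2\theta = \Dim(\filtration{F}{\bullet}{})$. Dividing $(\star)$ by $i^{2\theta}$ and regrouping the powers yields
\[
\frac{\dim_\kk \filtration{F}{i}{}}{i^{2\theta}} \;\leq\; (C+1)^{\theta}(C+2)^{\theta}\cdot \frac{\dim_\kk \filtration{G}{(C+1)i}{}}{\bigl((C+1)i\bigr)^{\theta}} \cdot \frac{\dim_\kk \filtration{G}{(C+2)i}{}}{\bigl((C+2)i\bigr)^{\theta}}.
\]
Taking $\limsup_{i\to\infty}$, and using both that $\limsup(a_i b_i) \leq (\limsup a_i)(\limsup b_i)$ for nonnegative sequences and that any subsequence $n = (C+k)i$ has $\limsup_i \dim_\kk \filtration{G}{n}{}/n^\theta \leq \e(\filtration{G}{\bullet}{})$, I obtain
\[
\e(\filtration{F}{\bullet}{}) \;\leq\; (C+1)^{\theta}(C+2)^{\theta}\, \e(\filtration{G}{\bullet}{})^{2},
\]
and taking square roots gives the desired bound on $\e(\filtration{G}{\bullet}{})$.

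There is no real obstacle here; the content has been compressed into \Cref{LemmaHomBernstein}. The only minor subtlety is to be careful that the inequality $(\star)$ only holds for $i$ large enough that $\filtration{G}{(C+1)i}{} \neq 0$ (so that the hypothesis of the lemma applies), but this uses nothing beyond the exhaustiveness of $\filtration{G}{\bullet}{}$ and the fact that both $\Dim$ and $\e$ are insensitive to finitely many initial terms.
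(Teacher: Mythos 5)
Your proposal is correct and follows essentially the same route as the paper: both parts are deduced from the dimension-count inequality supplied by \Cref{LemmaHomBernstein}, with the same limit-superior manipulation for the multiplicity bound (and the nontriviality/exhaustiveness point you flag is handled identically in the paper, where the lemma's hypothesis is $\filtration{G}{i}{}\neq 0$ rather than $\filtration{G}{(C+1)i}{}\neq 0$, a harmless discrepancy since both hold for $i\gg 0$).
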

     
\begin{proof}
           The first claim holds  if $\Dim(\filtration{G}{\bullet}{})$ is infinite, so suppose that is not the case.
           Let $t > \theta \coloneqq \Dim(\filtration{G}{\bullet}{})$.
           For all sufficiently large $i$ we have $\dim_\kk \filtration{G}{i}{}\le i^t$ by the definition of dimension, as well as $\filtration{G}{i}{} \ne 0$ by the nontriviality of $M$.
           \Cref{LemmaHomBernstein} then shows that, for such $i$,
	\[
	\dim_\kk \filtration{F}{i}{}\leq ((C+1)i)^{t}( (C+2)i)^{t}
	= (C+1)^t (C+2)^{t} i^{2t},
	\]
	and it follows that $\Dim(\filtration{F}{\bullet}{})\leq 2t$.
	Since this holds for all $t > \theta$, we conclude that $\frac{1}{2}\Dim(\filtration{F}{\bullet}{}) \leq \theta = \Dim(\filtration{G}{\bullet}{})$.
        
	Now assume that $\theta=\Dim(\filtration{G}{\bullet}{})=\frac{1}{2}\Dim(\filtration{F}{\bullet}{})$.
        Invoking \Cref{LemmaHomBernstein} one more time, we have 
	\begin{align*}\e (\filtration{F}{\bullet}{})&= \limsup\limits_{i\to \infty} \frac{\dim_\kk{\filtration{F}{i}{}}}{ i^{2\theta}} \leq \limsup\limits_{i\to \infty} \frac{\dim_\kk{\filtration{G}{(C+1)i}{}} \cdot \dim_\kk{\filtration{G}{(C+2)i}{}}}{ i^{2\theta}}\\
	&\leq \left( \limsup\limits_{i\to \infty} \frac{(C+1)^{\theta} \dim_\kk{\filtration{G}{(C+1)i}{}}}{((C+1)i)^{\theta}} \right)
	\left(\limsup\limits_{i\to \infty} \frac{(C+2)^{\theta} \dim_\kk{\filtration{G}{(C+2)i}{}}}{((C+2)i)^{\theta}}\right)\\
	&\leq (C+1)^{\theta}(C+2)^{\theta}\left( \limsup\limits_{i\to \infty} \frac{\dim_\kk \filtration{G}{i}{}}{i^{\theta}}\right)^2 
	= (C+1)^{\theta}(C+2)^{\theta}  \e (\filtration{G}{\bullet}{})^2,
	\end{align*}
	and the claimed inequality follows.
\end{proof}

\begin{definition}
   Let $(A,\filtration{F}{\bullet}{})$ be a linearly simple filtered $\kk$-algebra such that $\dim(\filtration{F}{\bullet}{})<\infty$ and $0<\e(\filtration{F}{\bullet}{})<\infty$.
A nonzero $A$-module is \emph{holonomic} if it admits a filtration $\filtration{G}{\bullet}{}$ of dimension $\frac{1}{2}\Dim(\filtration{F}{\bullet}{})$ and with finite multiplicity; the zero module is also holonomic by convention.
\end{definition}

We shall see in \Cref{ThmLenHol} that a holonomic module has finite length, so in particular it must be finitely generated.  
We also point out that there are $A$-modules that admit filtrations of dimension $\frac{1}{2}\Dim(\filtration{F}{\bullet}{})$ but are not holonomic \cite[p.~224]{Bavula}.

\begin{proposition}\label{PropSubQuotSum}
If $(A,\filtration{F}{\bullet}{})$ is a linearly simple filtered $\kk$-algebra with finite dimension, and positive and finite multiplicity, then the following hold\textup:
\begin{enumerate}
   \item \label{item: holonomicity is preserved by submodules and quotients}
   Every submodule and quotient of a holonomic $A$-module is holonomic.
   \item \label{item: holonomicity is preserved by direct sums}
   Every finite direct sum of holonomic $A$-modules is holonomic.
\end{enumerate}
\end{proposition}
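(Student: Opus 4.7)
The plan is to construct canonical compatible filtrations on the submodule, quotient, or direct sum from the witnessing holonomic filtration(s), bound their $\kk$-dimensions term by term, and then invoke Bernstein's inequality (\Cref{thm:Bern}) to upgrade the resulting upper bound on the filtration dimension to an equality.

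For part \ref{item: holonomicity is preserved by submodules and quotients}, fix a holonomic module $M$ with witnessing filtration $\filtration{G}{\bullet}{}$ of dimension $\tfrac{1}{2}\Dim(\filtration{F}{\bullet}{})$ and finite multiplicity, and let $N \subseteq M$ be a submodule. I would set $\filtration{H}{i}{} \coloneqq N \cap \filtration{G}{i}{}$; this is readily checked to be an ascending exhaustive filtration on $N$ by finite-dimensional $\kk$-subspaces, and compatibility with $\filtration{F}{\bullet}{}$ follows from $\filtration{F}{i}{}\filtration{H}{j}{} \subseteq N \cap \filtration{F}{i}{}\filtration{G}{j}{} \subseteq \filtration{H}{i+j}{}$. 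Since $\filtration{H}{i}{} \subseteq \filtration{G}{i}{}$, \Cref{prop:lin-dom} gives $\Dim(\filtration{H}{\bullet}{}) \le \tfrac{1}{2}\Dim(\filtration{F}{\bullet}{})$ and, when equality holds, $\e(\filtration{H}{\bullet}{}) \le \e(\filtration{G}{\bullet}{}) < \infty$. If $N = 0$, it is holonomic by convention; otherwise \Cref{thm:Bern} supplies the reverse dimension inequality, so $\filtration{H}{\bullet}{}$ witnesses holonomicity of $N$. The argument for $M/N$ is identical with $\filtration{H}{i}{} \coloneqq (\filtration{G}{i}{}+N)/N$, which is a finite-dimensional quotient of $\filtration{G}{i}{}$ and thus satisfies the same dimension and multiplicity bounds.

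For part \ref{item: holonomicity is preserved by direct sums}, by induction it suffices to handle two holonomic summands $(M_1,\filtration{G}{\bullet}{1})$ and $(M_2,\filtration{G}{\bullet}{2})$. Setting $\filtration{H}{i}{} \coloneqq \filtration{G}{i}{1} \oplus \filtration{G}{i}{2}$ produces a compatible filtration on $M_1 \oplus M_2$ with $\dim_\kk \filtration{H}{i}{} = \dim_\kk \filtration{G}{i}{1} + \dim_\kk \filtration{G}{i}{2}$. Writing $d \coloneqq \tfrac{1}{2}\Dim(\filtration{F}{\bullet}{})$, subadditivity of $\limsup$ gives both $\Dim(\filtration{H}{\bullet}{}) \le d$ and $\e(\filtration{H}{\bullet}{}) \le \e(\filtration{G}{\bullet}{1}) + \e(\filtration{G}{\bullet}{2}) < \infty$, while \Cref{thm:Bern} provides the reverse dimension inequality as $M_1\oplus M_2 \ne 0$. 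The only points that require attention are the trivial cases where a submodule, quotient, or summand is zero, all handled by the convention that $0$ is holonomic, together with the bookkeeping needed to verify that the induced filtrations satisfy the requirements of \Cref{notation:filtration}; I do not anticipate any substantive obstacle.
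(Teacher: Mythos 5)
Your proposal is correct and follows essentially the same route as the paper: intersect with (respectively, push forward) the witnessing filtration for submodules and quotients, take the direct sum filtration after reducing to two summands, and use the termwise dimension bounds together with \Cref{thm:Bern} to force equality of dimension and finiteness of multiplicity. The extra bookkeeping you flag (compatibility checks, the zero-module convention, subadditivity of the limit superior) is exactly what the paper's argument implicitly relies on, so there is no gap.
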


\begin{proof}%\leavevmode
	\begin{enumerate}
           \item Let $M$ be a nonzero holonomic $A$-module, and let $\filtration{G}{\bullet}{}$ be a filtration on $M$ of dimension $\frac{1}{2}\Dim(\filtration{F}{\bullet}{})$ and finite multiplicity.
           For a nonzero proper submodule $N$ of $M$, $N\cap \filtration{G}{\bullet}{}$ is a filtration on $N$ with $\dim_\kk (N\cap \filtration{G}{i}{}) \leq \dim_\kk \filtration{G}{i}{}$, so $\Dim(N\cap \filtration{G}{\bullet}{})\leq \Dim(\filtration{G}{\bullet}{}) = \frac{1}{2}\Dim(\filtration{F}{\bullet}{})$, and if equality holds, then the multiplicity is finite.
           But equality holds by \Cref{thm:Bern}, so $N$ is holonomic.           
           We show that the quotient $M/N$ is holonomic in similar fashion, using the filtration $(\filtration{G}{\bullet}{}+N)/N$. 
           \item This reduces to the case of two modules, say $M_1$ and $M_2$, with filtrations $\filtration{G}{\bullet}{1}$ and $\filtration{G}{\bullet}{2}$ of dimension $\frac{1}{2}\Dim(\filtration{F}{\bullet}{})$ and finite multiplicity. Then $\filtration{G}{\bullet}{1}\oplus \filtration{G}{\bullet}{2}$ is a filtration on $M_1\oplus M_2$ of dimension $\frac{1}{2}\Dim(\filtration{F}{\bullet}{})$ and finite multiplicity. \qedhere
	\end{enumerate}
\end{proof}

\begin{remark}\label{HolonomicNotPrefect}
   We recall that an extension of two holonomic $A$-modules might not be holonomic \cite[Section~3]{TwoExamples}.
   This implies that multiplicity is not additive, nor subadditive.
   Furthermore, even for a holonomic $A$-module with a standard filtration, the limit superior in the definition of multiplicity cannot be changed to a plain limit, as the sequence in the definition may fail to converge \cite[Section~4]{TwoExamples}.
\end{remark}

It is known that a holonomic module over the ring of differential operators on a polynomial ring has finite length \cite[Theorem~9.6]{Bavula}.
We now show this for a more general class of algebras.  

\begin{theorem}\label{ThmLenHol}
Let $(A,\filtration{F}{\bullet}{})$ be a $C$-linearly simple filtered $\kk$-algebra with finite dimension and positive finite multiplicity.
If $M$ is a holonomic $A$-module, then $M$ has finite length as an $A$-module. Furthermore,
\[\Len_A M\leq 	\frac{\e(\filtration{G}{\bullet}{})^2(C+1)^\theta (C+2)^\theta}{ \e(\filtration{F}{\bullet}{})}\]
for any filtration $\filtration{G}{\bullet}{}$ on $M$ of dimension $\theta \coloneqq \frac{1}{2} \Dim(\filtration{F}{\bullet}{})$ and finite multiplicity.
\end{theorem}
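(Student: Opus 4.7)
The plan is to bound the length of any strictly ascending chain of $A$-submodules $0 = M_0 \subsetneq M_1 \subsetneq \cdots \subsetneq M_n = M$ by the right-hand side of the stated inequality. I would set $N_i = M_i/M_{i-1}$; by \Cref{PropSubQuotSum}(i), each $N_i$ is holonomic. I equip $M_i$ with the filtration $M_i \cap \filtration{G}{\bullet}{}$, and $N_i$ with the image filtration $\filtration{G}{\bullet}{N_i}$ via the quotient $M_i \twoheadrightarrow N_i$. Each of these is compatible with $\filtration{F}{\bullet}{}$, and the short exact sequence of $\kk$-vector spaces
\[0 \to M_{i-1} \cap \filtration{G}{j}{} \to M_i \cap \filtration{G}{j}{} \to \filtration{G}{j}{N_i} \to 0\]
telescopes upon summing over $i = 1, \dots, n$ to produce the key identity
\[\dim_\kk \filtration{G}{j}{} = \sum_{i=1}^n \dim_\kk \filtration{G}{j}{N_i}\qquad\text{for every } j \in \NN.\]

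Since each $N_i$ is nonzero and its filtration is exhaustive, $\filtration{G}{j}{N_i} \neq 0$ for all $j$ large enough, so I may choose a single threshold beyond which this holds simultaneously for every $i$. For such $j$, applying \Cref{LemmaHomBernstein} to $(N_i, \filtration{G}{\bullet}{N_i})$ yields
\[\dim_\kk \filtration{F}{j}{} \leq \dim_\kk \filtration{G}{(C+1)j}{N_i} \cdot \dim_\kk \filtration{G}{(C+2)j}{N_i}.\]
I would then take square roots, sum over $i$, and invoke the Cauchy--Schwarz inequality, combined with the additive identity above applied at indices $(C+1)j$ and $(C+2)j$, to obtain
\[n\sqrt{\dim_\kk \filtration{F}{j}{}} \leq \sum_{i=1}^n \sqrt{\dim_\kk \filtration{G}{(C+1)j}{N_i}\cdot \dim_\kk \filtration{G}{(C+2)j}{N_i}} \leq \sqrt{\dim_\kk \filtration{G}{(C+1)j}{}\cdot \dim_\kk \filtration{G}{(C+2)j}{}}.\]

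Squaring, dividing by $j^{2\theta}$, and passing to the limit superior as $j \to \infty$, using that $\limsup_j(a_j b_j) \leq (\limsup_j a_j)(\limsup_j b_j)$ for bounded nonnegative sequences, will give
\[n^2 \, \e(\filtration{F}{\bullet}{}) \leq (C+1)^\theta (C+2)^\theta \, \e(\filtration{G}{\bullet}{})^2.\]
To close, I observe that the multiplicity portion of \Cref{thm:Bern} is exactly the statement that $X := (C+1)^\theta (C+2)^\theta \e(\filtration{G}{\bullet}{})^2 / \e(\filtration{F}{\bullet}{}) \geq 1$; then $n^2 \leq X$ together with $X \geq 1$ yields $n \leq \sqrt{X} \leq X$, which is the desired bound.

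The main obstacle will be the assembly step: multiplicity is not subadditive on short exact sequences (as emphasized in \Cref{HolonomicNotPrefect}), so any naive attempt to sum multiplicities of the quotients $N_i$ is doomed. The crucial trick is to take square roots \emph{before} summing, so that Cauchy--Schwarz can repackage the sum of individual Bernstein-type injections back into an inequality expressed in terms of the filtration $\filtration{G}{\bullet}{}$ on $M$. The mild quadratic loosening between the derived bound $n^2 \leq X$ and the stated bound $n \leq X$ is then absorbed by the fact that $X \geq 1$, courtesy of the multiplicity inequality in \Cref{thm:Bern}.
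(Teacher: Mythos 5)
Your proposal is correct, and it follows the same skeleton as the paper's proof: the same chain of submodules, the same induced filtrations on the subquotients, the same telescoping identity expressing $\dim_\kk \filtration{G}{j}{}$ as the sum of the dimensions of the subquotient filtrations, and the same application of \Cref{LemmaHomBernstein} to each subquotient for $j\gg 0$. The only genuine divergence is the aggregation step. The paper simply notes that each product $\dim_\kk \olfiltration{G}{(C+1)j}{i}\,\dim_\kk \olfiltration{G}{(C+2)j}{i}$ is at least $\dim_\kk \filtration{F}{j}{}$ and bounds the sum of these products by the product of the sums, obtaining $t\,\dim_\kk \filtration{F}{j}{}\le \dim_\kk \filtration{G}{(C+1)j}{}\,\dim_\kk \filtration{G}{(C+2)j}{}$ and hence the stated bound $t\le X$ directly, with no need to know $X\ge 1$. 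Your square-root-plus-Cauchy--Schwarz aggregation instead gives the stronger intermediate inequality $n^2\,\e(\filtration{F}{\bullet}{})\le (C+1)^{\theta}(C+2)^{\theta}\e(\filtration{G}{\bullet}{})^2$, i.e.\ $\Len_A M\le \sqrt{X}$, which you then deliberately relax to the stated bound via the multiplicity half of \Cref{thm:Bern} (which indeed says $X\ge 1$, applicable since the relevant filtration has dimension $\theta=\tfrac12\Dim(\filtration{F}{\bullet}{})$ and $M\ne 0$; the case $M=0$ is trivial). So your argument is sound, all the limsup manipulations are the same ones the paper performs, and in fact you prove a slightly sharper length bound than the one stated, at the mild cost of an extra appeal to \Cref{thm:Bern}.
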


\begin{proof}
   Let $0=M_0\subsetneqq M_1\subsetneqq \cdots \subsetneqq M_t=M$ be a chain of submodules of $M$, which we may assume is a nontrivial $A$-module.
   Given a filtration $\filtration{G}{\bullet}{}$ on $M$ of dimension $\theta \coloneqq \frac{1}{2} \Dim(\filtration{F}{\bullet}{})$ and finite multiplicity, let $\olfiltration{G}{\bullet}{j}$ be the filtration on $M_j/M_{j-1}$ given by $\olfiltration{G}{i}{j} = (\filtration{G}{i}{}\cap M_j+M_{j-1})/M_{j-1}$.
   We note that, because $\olfiltration{G}{i}{j} \cong (\filtration{G}{i}{}\cap M_j)/(\filtration{G}{i}{}\cap M_{j-1})$, the sum $\sum^t_{j=1} \dim_\kk \olfiltration{G}{i}{j}$ telescopes to $\dim_\kk \filtration{G}{i}{}$ for each $i$.
   By \Cref{LemmaHomBernstein},
   \[\dim_\kk \filtration{F}{i}{}\leq \dim_\kk \olfiltration{G}{(C+1)i}{j} \dim_\kk \olfiltration{G}{(C+2)i}{j}\]
   for every $j$ and all sufficiently large $i$.
   Adding up over $j = 1,\ldots,t$, we obtain
   \begin{align*}
  t\dim_\kk \filtration{F}{i}{} &\le \sum_{j=1}^t \Big(\dim_\kk \olfiltration{G}{(C+1)i}{j} \dim_\kk \olfiltration{G}{(C+2)i}{j} \Big)\\
                &\le \bigg(\sum_{j=1}^t \dim_\kk \olfiltration{G}{(C+1)i}{j} \bigg)\bigg(\sum_{j=1}^t\dim_\kk \olfiltration{G}{(C+2)i}{j}\bigg)
  =\dim_\kk \filtration{G}{(C+1)i}{}  \dim_\kk \filtration{G}{(C+2)i}{}.
   \end{align*}
   Dividing by $i^{2\theta}$ and taking limit superior yields
   \begin{align*}
  t \e(\filtration{F}{\bullet}{}) &\le \limsup_{i\to\infty}\frac{\dim_\kk \filtration{G}{(C+1)i}{}  \dim_\kk \filtration{G}{(C+2)i}{}}{i^{2\theta}}\\
  &\le \bigg(\limsup_{i\to\infty}\frac{\dim_\kk \filtration{G}{(C+1)i}{}}{i^{\theta}}\bigg)
    \bigg(\limsup_{i\to\infty}\frac{\dim_\kk \filtration{G}{(C+2)i}{}}{i^{\theta}}\bigg)\\
  &= (C+1)^\theta (C+2)^\theta \bigg(\limsup_{i\to\infty}\frac{\dim_\kk \filtration{G}{(C+1)i}{}}{((C+1)i)^{\theta}}\bigg)
    \bigg(\limsup_{i\to\infty}\frac{\dim_\kk \filtration{G}{(C+2)i}{}}{((C+2)i)^{\theta}}\bigg)\\
  &\le (C+1)^\theta (C+2)^\theta \e(\filtration{G}{\bullet}{})^2.
   \end{align*}
   We conclude that
   \[t\leq \frac{\e(\filtration{G}{\bullet}{})^2(C+1)^\theta (C+2)^\theta}{ \e(\filtration{F}{\bullet}{})}\]
   which proves our two claims.
\end{proof}

\begin{remark}
   \Cref{ThmLenHol} shows in particular that any holonomic module is finitely generated, so the notion of good filtration applies, and adopting the notation of that theorem, we have
	\[\Len_A M\leq 	\frac{\e(M,\filtration{F}{\bullet}{})^2(C+1)^\theta (C+2)^\theta}{ \e(\filtration{F}{\bullet}{})}.\]
\end{remark}

\begin{proposition}
   Let $(A,\filtration{F}{\bullet}{})$ be a linearly simple filtered $\kk$-algebra with nonzero finite dimension and multiplicity.
   Then any holonomic $A$-module is cyclic.
\end{proposition}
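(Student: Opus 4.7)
The plan is to induct on the length of $M$ as an $A$-module, which is finite by \Cref{ThmLenHol}. The base case is immediate: if $M$ is simple, then any nonzero $m \in M$ satisfies $Am = M$, so $M$ is cyclic (and the zero module is cyclic by convention).

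For the inductive step, let $M$ have length $t \geq 2$, and assume the result for all holonomic $A$-modules of strictly smaller length. Choose a maximal proper submodule $N \subsetneq M$, so $M/N$ is simple; \Cref{PropSubQuotSum} shows that $N$ and $M/N$ are both holonomic, and since $N$ has length $t-1$ the inductive hypothesis produces $n \in N$ with $N = An$. For any $m \in M \smallsetminus N$, simplicity of $M/N$ gives $Am + N = M$, so to conclude that $M$ is cyclic it suffices to produce $a \in A$ with $n \in A(m + an)$: this would force $A(m+an) \supseteq An = N$, and hence $A(m+an) = Am + N = M$.

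The main obstacle is this cyclic-lifting step, which is a close relative of Stafford's theorem in the classical $D$-module setting and is in general quite delicate. In the abstract framework of linearly simple filtered algebras, the two principal inputs should be: (i) the algebraic simplicity of $A$, which is built into \Cref{linearly_simple}---for any nonzero $\delta \in \filtration{F}{i}{}$, the relation $1 \in \filtration{F}{Ci}{}\, \delta\, \filtration{F}{Ci}{}$ forces the two-sided ideal $A\delta A$ to equal $A$---and (ii) the tight control over dimensions and multiplicities of cyclic submodules of $M$ afforded by \Cref{LemmaHomBernstein,thm:Bern,ThmLenHol}. The anticipated argument locates a suitable $a$ by showing, via these bounds applied to a good filtration of $M$, that the ``bad'' set $\{a \in A : n \notin A(m+an)\}$ cannot exhaust $A$: one filters it by intersection with $\filtration{F}{i}{}$ and compares the asymptotic growth of its $\kk$-dimension against $\dim_\kk \filtration{F}{i}{}$, so that for $i \gg 0$ one finds $a \in \filtration{F}{i}{}$ for which the replacement $m \mapsto m + an$ succeeds in producing a cyclic generator of $M$.
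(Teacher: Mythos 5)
Your reduction is fine as far as it goes: finiteness of length via \Cref{ThmLenHol}, induction on length, a maximal proper submodule $N=An$ with $M/N$ simple, and the observation that it suffices to find $a\in A$ with $N\subseteq A(m+an)$. This matches the skeleton of the classical argument. But the proof stops exactly where the theorem lives: the cyclic-lifting step is not proved, only ``anticipated.'' The strategy you sketch for it does not work as stated. The set $\{a\in A : n\notin A(m+an)\}$ is merely a subset of $A$ --- it is not closed under addition or scalar multiplication --- so ``its $\kk$-dimension'' and a comparison of its growth against $\dim_\kk\filtration{F}{i}{}$ are undefined; even if you replaced it by the span of its intersection with $\filtration{F}{i}{}$, no estimate on that span is derived from \Cref{LemmaHomBernstein} or \Cref{thm:Bern}, and showing that the ``good'' set is nonempty is precisely the assertion to be proved, not something a crude count of $\dim_\kk\filtration{F}{i}{}$ gives you. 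So there is a genuine gap, not just a compressed step.

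The paper handles this by quoting the classical proof for the Weyl algebra (Coutinho, Theorem~10.2.5), which transfers verbatim to this setting, and the mechanism there is algebraic rather than asymptotic: (a) $A$ is a simple ring, which is immediate from \Cref{linearly_simple} since $1\in\filtration{F}{Ci}{}\,\delta\,\filtration{F}{Ci}{}\subseteq A\delta A$ for every nonzero $\delta$; and (b) $A$ does not have finite length as a left module over itself, so for any element $u$ of a finite-length module the left ideal $\ann_A(u)$ is nonzero. One then uses simplicity --- the two-sided ideal generated by $\ann_A(u)$ is all of $A$, so $\ann_A(u)\,a\,v\neq 0$ for a suitable $a$ --- together with an induction on the length of the cyclic piece being absorbed, to produce the element $a$ with $A(m+an)=M$. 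Dimensions and multiplicities play no role in this step beyond having already guaranteed finite length. If you want a self-contained write-up, you should either reproduce that lemma (simple, non-Artinian ring $\Rightarrow$ finite-length modules are cyclic) or cite it as the paper does; as written, your argument does not establish the proposition.
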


\begin{proof}
   This follows essentially from the classic proof for the Weyl algebra \cite[Theorem~10.2.5]{Coutinho}.
   Note that the same proof works in this context without the left Noetherian hypothesis on the ring, as $M$ has finite length and $A$ is not an Artinian module over itself.
\end{proof}

%%%%%%%%%%%%%%%%%%%%%%%%%%%%%%%%%%%%%%%%%%
\section{Rings of differential operators and Bernstein algebras} \label{Sec4}
%%%%%%%%%%%%%%%%%%%%%%%%%%%%%%%%%%%%%%%%%%

From this section onward, we direct our focus to rings of differential operators on commutative $\kk$-algebras.
After recalling some basic terminology, we take a look at local cohomology modules from the standpoint of $D$-module theory, and use the machinery developed in the previous sections to determine sufficient conditions under which local cohomology modules are holonomic---which in particular implies the finiteness of the sets of associated primes of these modules.
Narrowing our focus to rings of differential operators on commutative finitely generated graded $\kk$-algebras, we introduce a class of filtrations on those rings that generalize the classic Bernstein filtration on the Weyl algebra.
We conclude by introducing a class of algebras over which one can develop a nice theory of holonomic $D$-modules.

\subsection{Rings of differential operators}

\subsubsection*{Generalities}

Let $R$ be a commutative $\kk$-algebra, and consider the ring of $\kk$-linear endomorphisms $\End_{\kk}(R)$.
The $\kk$-linear differential operators of order $\le i$, where $i$ is a nonnegative integer, are defined inductively as follows:
A differential operator of order $0$ is simply the multiplication by an element of $R$.
If $i>0$, then a differential operator of order $\le i$ is a $\kk$-linear map $\delta:R\to R$ such that for
every $r\in R$, the commutator $[\delta, r] \coloneqq \delta\circ r - r\circ \delta$ is a differential operator of order $\le i-1$, where we consider $ r:R\to R$ as the multiplication by $r$.
Equivalently, a $\kk$-linear map $\delta:R\to R$ is a differential operator of order $\le i$ if for every $r_0,\ldots,r_i \in R$ the $(i+1)$-fold commutator $[\cdots [[\delta,r_0],r_1],\ldots, r_i]$ is zero.
We say that $\delta \in \End_{\kk}(R)$ is a differential operator of order $i$, and write $\ord(\delta) = i$, if $\delta$ is a differential operator of order $\le i$, but not of order $< i$. 

\begin{remark}
   \label{rmk: iterated commutators}
   For later use, we note that if $R$ is generated as a $\kk$-algebra by a set~$\Sigma$, then to verify that $\delta \in \End_\kk(R)$ is a differential operator of order $\le i$, it suffices to verify that $[\delta,r]$ is a differential operator of order $\le i-1$ for all $r\in \Sigma$, or that $[\cdots [[\delta,r_0],r_1],\ldots, r_i] = 0$ for all $r_0,\ldots,r_i\in \Sigma$.
\end{remark}

The set consisting of all $\kk$-linear differential operators on $R$ of order~$\le i$ is a $\kk$-subspace of $\End_\kk(R)$, which we denote by $D^i_{R|\kk}$.
Differential operators of all orders form a ring 
\[ D_{R|\kk} \coloneqq \bigcup_{i\in \NN} D^i_{R|\kk} \subseteq {\End}_{\kk}(R).\]
The chain of $\kk$-vector spaces $D^0 _{R|\kk} \subseteq D^1_{R|\kk} \subseteq D^2_{R|\kk} \subseteq \cdots$ is called the \emph{order filtration} on $D_{R|\kk}$, though we caution the reader that this is not a filtration in the sense of \Cref{notation:filtration}, since these are generally not finite-dimensional $\kk$-vector spaces and $D^0 _{R|\kk} \cong R \not\cong \kk$.
Despite that, we occasionally extend some terminology introduced for filtrations to the order filtration---specifically, we say that a filtration $\filtration{F}{\bullet}{}$ on $D_{R|\kk}$ is linearly dominated by the order filtration if there exists $C\in \PP$ such that $\filtration{F}{i}{} \subseteq D_{R|\kk}^{Ci}$ for every $i\in \NN$.

%\jack[inline]{The following must be in the literature somewhere. One can find it in Kunz's unpublished Algebraic differential calculus from his website, Prop II.3.13. Maybe we can just cite that. I don't think we have to state this here really; we can just cite it where we use it.}

%\pedro[inline]{Maybe we should move this closer to the point where it's used (e.g., after \Cref{explicit-action}).}

\subsubsection*{Differential operators in positive characteristic}

Turning to positive characteristic, suppose now that $R$ is a commutative algebra over a perfect field $\kk$ of characteristic $p>0$.
Assume that $R$ is $F$-finite, that is, $R$ is finitely generated as an $R^{p^e}$-module for some (equivalently, all) $e>0$, where $R^{p^e}$ is the subring of $R$ consisting of all $p^e$-th powers of its elements.
Then 
\[D_{R|\kk} = \bigcup_{e\in\NN}D^{(e)}_{R|\kk}\]
where $D^{(e)}_{R|\kk}\coloneqq \End_{R^{p^e}}(R)$ consists of the differential operators of \emph{level $e$}
\cite[Theorem~2.7]{SmithSP} (see also \cite[Theorem~1.4.9]{yekutieli.explicit_construction}).

If $R$ is reduced, we may consider the ring $R^{1/p^e}=\{r^{1/p^e} \; | \; r\in R\}$ consisting of the $p^e$-th roots of the elements of $R$, and identify $D^{(e)}_{R|\kk}$ with $\End_{R}(R^{1/p^e})$ via the map
$\delta\mapsto \delta^{1/p^e}$, where
\begin{equation}
   \delta^{1/p^e}(r^{1/p^e})=\delta(r)^{1/p^e}.\label{eq: definition of roots of maps}
\end{equation}
We note that $D_{R|\kk}^{p^e-1}\subseteq D_{R|\kk}^{(e)}$, and so
\begin{equation}
   \label{eq: comparing order and level filtrations}
   D^i_{R|\kk}\subseteq D_{R|\kk}^{(\lceil \log_p(i+1)\rceil)}
\end{equation}
and if $R$ is generated by $n$ elements as a $\kk$-algebra, then
\begin{equation}\label{eq: comparing order and level filtrations 2}
   D_{R|\kk}^{(e)} \subseteq D^{n(p^e-1)}_{R|\kk}.
\end{equation}
Both of these facts are established in the work of Smith \cite[Theorem~2.7]{SmithSP}.

\subsubsection*{Differential operators on polynomial rings}
If $S = \kk[x_1,\ldots,x_n]$ is a polynomial ring over a field $\kk$ of characteristic zero, then $D_{S|\kk}$ coincides with the Weyl algebra $S\langle\partial_1,\dots, \partial_n\rangle$, where $\partial_i =\frac{\partial\ \,}{\partial x_i}$ are the partial derivatives.
A $\kk$-linear differential operator on $S$ can be written, in its unique \emph{normal form}, as $\delta= \sum_{\alpha,\beta} a_{\alpha\beta} x^\alpha \partial^\beta$, where $\alpha,\beta \in \mathbb{N}^n$, all but finitely many of the coefficients $a_{\alpha\beta}\in \kk$ are zero, and we adopt the multi-index notation $x^\alpha\coloneqq x_1^{\alpha_1}\cdots x_n^{\alpha_n}$, $\partial^\beta\coloneqq \partial_1^{\beta_1} \cdots \partial_n^{\beta_n}$.

The order filtration in this case is given by
\[D^i_{S|\kk}=\bigg\{ \sum_{\alpha,\beta} a_{\alpha\beta} x^\alpha \partial^\beta : |\beta| \leq i \bigg\},\]
where $|\beta|=\beta_1+\cdots+\beta_n$.
Note that the $D^i_{S|\kk}$ are not finite-dimensional $\kk$-vector spaces. 
The main example of a finite-dimensional filtration on $D_{S|\kk}$ is the \emph{Bernstein filtration}  $\gbf{\bullet}{S}$ given by
\[\gbf{i}{S}=\bigg\{ \sum_{\alpha,\beta} a_{\alpha\beta} x^\alpha \partial^\beta : |\alpha| +|\beta| \leq i \bigg\},\]
which we shall study in greater generality in \Cref{sec: gbf}.

\begin{remark}\label{rem: weights}
   In the terminology used by Smith  \cite{GregSmith} and Boldoni \cite{Boldini}, the order filtration and the Bernstein filtration are filtrations associated to the weight vectors $(\mathbf{0},\mathbf{1})\in \mathbb{Z}^{2n}$ and $(\mathbf{1},\mathbf{1})\in \mathbb{Z}^{2n}$, respectively.
   More generally one may consider filtrations associated to weight vectors $(\underline{u},\underline{v})\in \mathbb{Z}^{2n}$.
\end{remark}

If $S=\kk[x_1,\dots, x_n]$ is a polynomial ring with coefficients in a perfect field $\kk$ of characteristic $p>0$, then 
\[D_{S|\kk}=S \left \langle \frac{1}{p^e !} \partial_i ^{p^e}  \ \big| \   i=1,\dots , n , \  e \in \NN \right \rangle\]
where $\frac{1}{p^e !} \partial_i ^{p^e}$ is the operator that maps $x_1^{\alpha_1}\cdots x_n^{\alpha_n} \mapsto \binom{\alpha_i}{p^e}x_1^{\alpha_1}\cdots x_i^{\alpha_i-p^e}\cdots x_n^{\alpha_n}$ (so, in particular, $x_i^{p^e} \mapsto 1$). 

\subsubsection*{Differential operators on finitely generated algebras}

Returning to arbitrary characteristic, we now consider differential operators on finitely generated algebras.
Let $S$ be a polynomial ring over a field $\kk$ of arbitrary characteristic, and $R = S/ I$ for some ideal $I\subseteq S$.
The ring of $\kk$-linear differential operators on $R$ has been described in terms of the $\kk$-linear differential operators on~$S$ \cite[Theorem~15.5.13]{McC_Rob} (see also \cite{Milicic,Moncada}).
Namely, we have
\begin{equation}\label{opsonquot}
   D_{R|\kk} \cong \frac{ \{ \delta\in D_{S|\kk} : \delta (I)\subseteq I\}}{I D_{S|\kk}}.
\end{equation}
The order of the differential operators is preserved under the previous isomorphism; thus the order filtration on $D_{R|\kk}$ is given by
\[D^i_{R|\kk} \cong \frac{ \{ \delta\in D^i_{S|\kk} \ | \  \delta (I) \subseteq I\}}{I D^i_{S|\kk}}.\]

\subsubsection*{Differential operators on graded algebras}

When discussing commutative graded algebras, we shall adopt the following convention.
\begin{convention}
   \label{conv: commutative graded ring}
   Throughout this paper, by a \emph{\cga} we mean a positively graded commutative ring $R= \bigoplus_{i\in \NN} R_i$ with $R_0 =\kk$, that is finitely generated as a $\kk$-algebra.
\end{convention}

If $R$ is a \cga, the ring of differential operators $D_{R|\kk}$ naturally inherits a $\ZZ$-grading that extends the grading on $R$, where we declare a differential operator $\delta: R \to R$ to be homogeneous of degree $d$ if $\delta(R_i) \subseteq R_{i+d}$ for each $i$.

If $S$ is a standard graded polynomial ring, then under the previous grading, the partial derivatives $\partial_i$ are homogeneous of degree $-1$.
More generally, if we consider $S$ as a graded ring with $\deg(x_i)=w_i$, then 
 $D_{S|\kk} $  is a graded ring with $\deg(\partial_i)=-w_i$, and $\deg\big( \frac{1}{p^e !} \partial_i ^{p^e} \big) = -p^e w_i$ when $\kk$ is a field of positive characteristic~$p$.
 As a consequence, if $\delta \in D^i_{S|\kk}$, then $\deg(\delta)\geq -iw$, where $w=\max\{w_i\}$.
 By \eqref{opsonquot}, if $I$ is a homogeneous ideal of $S$ and $R=S/I$, then $ D_{R|\kk} $ is a graded algebra, again satisfying $\deg(\delta)\geq -iw$ for every $\delta \in D^i_{R|\kk}$.

\subsection{\v{C}ech and local cohomology as $D$-modules}

A commutative $\kk$-algebra $R$ has a natural structure of a left $D_{R|\kk}$-module.
In general, given a $D_{R|\kk}$-module $M$ and an element $f\in R$, the localization $M_f$ is also a left $D_{R|\kk}$-module.
We define the action of a differential operator $\delta$ of order zero by $\delta ( \frac{v}{f^t} )=\frac{ \delta (v)}{f^t}$.
Assuming the action has been defined for differential operators of order less than $n$, for $\delta\in D^{n}_{R|\kk}$ we define
\[\delta \left(\frac{v}{f^t}\right) = \frac{\delta (v) - [\delta,f^t] (\frac{v}{f^t})}{f^t}.\]
Note that this well defined, since $[\delta,f^t]$ has order at most $n-1$.
With this $D_{R|\kk}$-module structure on $M_f$, the
localization map $M\to M_f$ is a morphism of $D_{R|\kk}$-modules.

The \v{C}ech complex of $M$ with respect to a sequence of elements $\underline{f}=f_1,\ldots,f_\ell\in R$ is defined by
\[
\Cech^\bullet(\underline{f};M): \hskip 3mm 0\to M\to \bigoplus_i
M_{f_i}\to\bigoplus_{i,j} M_{f_i f_j}\to \cdots \to M_{f_1 \cdots
f_\ell} \to 0
\]
where the maps on every summand are localization maps up to a sign.
The \v{C}ech cohomology modules of $M$ with respect to the sequence $\underline{f}$ are defined by
\[
H^j_{\underline{f}}(M)=H^j(\Cech^\bullet(\underline{f};M)).
\]
If $\underline{g}$ is another sequence of elements of $R$ such that $(\underline{f}) = (\underline{g}) \eqqcolon I$, then $H^j_{\underline{f}}(M)=H^j_{\underline{g}}(M)$ for each $j$, which justifies our denoting this module simply by $H^j_I(M)$.

The \v{C}ech cohomology modules $H^j_I(M)$ inherit a $D_{R|\kk}$-module structure from their construction, and agree with the local cohomology modules of $M$ with support in $I$ whenever $R$ is a Noetherian ring.

We show in this section that holonomicity is preserved by localization and \v{C}ech cohomology.

\begin{lemma}\label{commute-f}
      Let $R$ be a commutative $\kk$-algebra. Let $f\in R$ and $\delta\in D_{R|\kk}$.
   Set $\delta^{(0)}=\delta$, and for each positive integer $i$  define $\delta^{(i)} \in D_{R|\kk}$ inductively by $\delta^{(i)}=[\delta^{(i-1)},f]$.
   Then for each $j \in \ZZ$ we have the following identities in $D_{R_f|\kk}$\textup:
	\[  \delta f^{j}=\sum_{i=0}^{\ord(\delta)}   \binom{j}{i} f^{j-i}  \delta^{(i)} \qquad  \text{and} \qquad {f}^j \delta =\sum_{i=0}^{\ord(\delta)} (-1)^{i} \binom{j}{i} \delta^{(i)} f^{j-i},  \]
	where $\binom{j}{i}=\frac{j\cdot (j-1)\cdots(j-i+1)}{i!}$.	
\end{lemma}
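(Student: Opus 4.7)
The plan is to establish both identities by induction on $j$, first for $j \geq 0$ in $D_{R|\kk}$ and then for $j < 0$ in $D_{R_f|\kk}$ via a downward induction. The finiteness of the sums is automatic: the recursion $\delta^{(i)} = [\delta^{(i-1)}, f]$ lowers the order by at least one at each step, so $\delta^{(i)} = 0$ for $i > \ord(\delta)$, and the sums truncate at $i = \ord(\delta)$ regardless of sign conventions.

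For the first identity with $j \geq 0$, I would induct on $j$. The base case $j = 0$ is immediate. For the inductive step, write $\delta f^{j+1} = (\delta f^j)\,f$, apply the inductive hypothesis, rewrite each $\delta^{(i)} f$ as $f \delta^{(i)} + \delta^{(i+1)}$, reindex the resulting $\delta^{(i+1)}$ sum by $i \mapsto i-1$, and collapse using Pascal's identity $\binom{j}{i} + \binom{j}{i-1} = \binom{j+1}{i}$. The second identity for $j \geq 0$ follows by the same scheme applied to $f^{j+1}\delta = f\,(f^j \delta)$, using instead the dual relation $f\delta^{(i)} = \delta^{(i)} f - \delta^{(i+1)}$; the alternating sign $(-1)^i$ is produced automatically by this substitution together with the reindexing.

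To extend to $j < 0$, I would introduce the discrepancy
\[ P(j) \coloneqq \delta f^{j} - \sum_{i=0}^{\ord(\delta)} \binom{j}{i} f^{j-i} \delta^{(i)} \in D_{R_f|\kk}, \]
defined for every $j \in \ZZ$, and verify the recursion $P(j+1) = P(j)\,f$ by the same algebraic manipulation as in the inductive step---crucially, this computation only uses the definition of the $\delta^{(i)}$ and does not rely on the vanishing of $P$. Since $P(j) = 0$ for $j \geq 0$ by the first part, and $f$ becomes a unit in $D_{R_f|\kk}$, we obtain $P(-1) = P(0)\, f^{-1} = 0$, and by downward induction $P(j) = 0$ for all negative $j$ as well. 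The analogous device, with $Q(j) \coloneqq f^j \delta - \sum_i (-1)^i \binom{j}{i}\,\delta^{(i)}\, f^{j-i}$ satisfying $Q(j+1) = f\,Q(j)$, handles the second identity.

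No step presents a genuine obstacle; the argument amounts to careful bookkeeping with binomials, signs, and index shifts. The only conceptual point worth isolating is that the passage to negative $j$ genuinely requires working in $D_{R_f|\kk}$, since one needs $f$ to be invertible in order to run the downward induction.
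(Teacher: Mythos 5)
Your argument is correct. For $j \ge 0$ it coincides with the paper's proof: induction on $j$, the relation $\delta^{(i)}f = f\delta^{(i)} + \delta^{(i+1)}$, a reindexing, and Pascal's identity, with the truncation $\delta^{(i)}=0$ for $i>\ord(\delta)$ justifying the bookkeeping. Where you diverge is in the negative exponents. The paper handles $j\le 0$ by induction on $\ord(\delta)-j$, using the decomposition $\delta f^{j} = f^{-1}\delta f^{j+1} - f^{-1}\delta^{(1)}f^{j}$ and applying the inductive hypothesis to two different operators ($\delta$ with exponent $j+1$, and the lower-order $\delta^{(1)}$ with exponent $j$), then recombining via $\binom{j+1}{i}-\binom{j}{i-1}=\binom{j}{i}$. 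You instead verify once and for all that the discrepancy $P(j)$ satisfies $P(j+1)=P(j)\,f$ for every $j\in\ZZ$ --- the same computation as the inductive step for $j\ge 0$, valid because Pascal's identity holds for the generalized binomial coefficients as polynomials in $j$ --- and then use the invertibility of $f$ in $D_{R_f|\kk}$ to propagate $P=0$ downward in $j$ alone. This buys a single induction parameter and no second binomial identity, and it makes explicit (as you note) that localization is needed precisely for the downward step; the paper's double induction, by contrast, never isolates the discrepancy and stays at the level of the stated formula, at the cost of tracking two quantities. Your symmetric treatment of the second identity via $Q(j+1)=f\,Q(j)$ is likewise fine; the paper omits that case as ``similar.''
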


\begin{proof}
   The proofs of the two identities are similar, so we prove only the first one.
   We first prove it for $j \ge 0$ by induction on $j$, observing that the identity holds trivially for $j=0$.
   For $j > 0$, using our induction hypothesis on $\delta f^{j-1}$ we obtain
   \begin{align*}
     \delta f^{j}
     &= \delta f^{j-1} f = \sum_{i\ge 0}  \binom{j-1}{i} f^{j-1-i} \delta^{(i)} f \\
     &= \sum_{i\ge 0}  \binom{j-1}{i} f^{j-1-i} \big(f \delta^{(i)}  + \delta^{(i+1)}\big)  \\
     &=  \sum_{i\ge 0}  \binom{j-1}{i} f^{j-i} \delta^{(i)}  + \sum_{i\ge 0}  \binom{j-1}{i} f^{j-1-i} \delta^{(i+1)} \\
     &=  \sum_{i\ge 0}  \binom{j-1}{i} f^{j-i} \delta^{(i)}  + \sum_{i\ge 0}  \binom{j-1}{i-1} f^{j-i} \delta^{(i)} \\
     &= \sum_{i\ge 0}  \binom{j}{i}  f^{j-i} \delta^{(i)}.
   \end{align*}
   We now prove the first identity for $j\le 0$, using induction on $\ord(\delta) - j$, observing that the identity holds trivially when $j=0$ or $\ord(\delta)=0$.
   If $j<0$, we have
   \[\delta f^j = f^{-1}f \delta f^j = f^{-1}\big(\delta f - \delta^{(1)}\big) f^j = f^{-1}\delta f^{j+1} - f^{-1}\delta^{(1)}f^j,\]
   and applying our induction hypothesis to both $\delta f^{j+1}$ and $\delta^{(1)}f^j$ we see that
   \begin{align*}
     \delta f^j
     &= f^{-1} \cdot\sum_{i\ge 0} \binom{j+1}{i} f^{j+1-i} \delta^{(i)} - f^{-1}\cdot\sum_{i\ge 0} \binom{j}{i} f^{j-i}\delta^{(i+1)} \\
     &= \sum_{i\ge 0} \binom{j+1}{i} f^{j-i} \delta^{(i)} - \sum_{i\ge 0} \binom{j}{i-1} f^{j-i}\delta^{(i)} \\
     &= \sum_{i\ge 0} \binom{j}{i} f^{j-i} \delta^{(i)}. \qedhere 
   \end{align*}
\end{proof}

\begin{lemma}\label{LemmaLocSameDim}
   Let $R$ be a commutative $\kk$-algebra.
   % Let $R$ be a domain that is a $\kk$-algebra.
   Let $\filtration{F}{\bullet}{}$ be a filtration on $D_{R|\kk}$ that is linearly dominated by the order filtration.
   Let $M$ be a left $D_{R|\kk}$-module with a filtration $\filtration{G}{\bullet}{}$ that is compatible with $\filtration{F}{\bullet}{}$.
   Suppose that $\filtration{G}{\bullet}{}$ has finite dimension $\theta$ and finite multiplicity.
   Then for any $f\in R$, there exists a filtration $\tildefiltration{G}{\bullet}{}$ on $M_f$ that is compatible with $\filtration{F}{\bullet}{}$, has dimension at most $\theta$, and if its dimension equals $\theta$, then its multiplicity is finite.
\end{lemma}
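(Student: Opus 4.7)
The plan is to define an explicit filtration $\tildefiltration{G}{\bullet}{}$ on $M_f$ of the form $\tildefiltration{G}{i}{} = f^{-Ci}\filtration{G}{Ni}{}$, for constants $C, N \in \PP$ chosen as follows. Let $C \in \PP$ be such that $\filtration{F}{i}{} \subseteq D^{Ci}_{R|\kk}$ for every $i$, as provided by the hypothesis that $\filtration{F}{\bullet}{}$ is linearly dominated by the order filtration. Since $f \in R = D^0_{R|\kk} \subseteq D_{R|\kk}$ and the filtration $\filtration{F}{\bullet}{}$ on $D_{R|\kk}$ is exhaustive, there exists $n_0 \in \PP$ with $f \in \filtration{F}{n_0}{}$; I would take $N = 1 + Cn_0$.

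Before carrying out the compatibility check, I would verify that $\tildefiltration{G}{\bullet}{}$ is indeed an ascending, exhaustive filtration of $M_f$ by finite-dimensional $\kk$-vector spaces. The ascending property is straightforward: since $f^C \in \filtration{F}{Cn_0}{}$, we have $f^C\filtration{G}{Ni}{} \subseteq \filtration{G}{Ni + Cn_0}{} \subseteq \filtration{G}{N(i+1)}{}$, and so $\tildefiltration{G}{i}{} \subseteq \tildefiltration{G}{i+1}{}$. Exhaustiveness follows from the fact that any $m/f^s \in M_f$ with $m \in \filtration{G}{t}{}$ already lies in $\tildefiltration{G}{i}{}$ for all sufficiently large $i$.

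The heart of the proof---and the expected main obstacle---is checking compatibility, namely $\filtration{F}{j}{}\tildefiltration{G}{i}{} \subseteq \tildefiltration{G}{i+j}{}$. Given $\delta \in \filtration{F}{j}{}$ (so $\ord(\delta) \leq Cj$) and $v \in \filtration{G}{Ni}{}$, invoking \Cref{commute-f} gives
\[
   \delta\!\left(\frac{v}{f^{Ci}}\right) \;=\; \sum_{k=0}^{Cj} \binom{-Ci}{k}\, f^{-Ci-k}\, \delta^{(k)}(v),
\]
where the iterated commutators $\delta^{(k)} = [\delta^{(k-1)}, f]$ satisfy $\delta^{(k)} \in \filtration{F}{j + kn_0}{}$ by a short induction using $f \in \filtration{F}{n_0}{}$ and multiplicative closure of $\filtration{F}{\bullet}{}$. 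Because $k \leq Cj$, each term may be rewritten as $f^{-C(i+j)}\cdot f^{Cj-k}\delta^{(k)}(v)$ with $Cj - k \ge 0$, and the element $f^{Cj-k}\delta^{(k)}(v)$ lies in $\filtration{G}{(Cj-k)n_0 + (j+kn_0) + Ni}{} = \filtration{G}{N(i+j)}{}$ thanks to the choice $N = 1 + Cn_0$, so the whole sum sits inside $\tildefiltration{G}{i+j}{}$.

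Finally, for the dimension and multiplicity bounds, multiplication by $f^{-Ci}$ yields a $\kk$-linear surjection $\filtration{G}{Ni}{} \twoheadrightarrow \tildefiltration{G}{i}{}$, whence $\dim_\kk \tildefiltration{G}{i}{} \le \dim_\kk \filtration{G}{Ni}{}$. Comparing limits superior along the lines of \Cref{prop:lin-dom} gives $\Dim(\tildefiltration{G}{\bullet}{}) \le \theta$, with $\e(\tildefiltration{G}{\bullet}{}) \le N^\theta \e(\filtration{G}{\bullet}{}) < \infty$ whenever equality of dimensions holds. The key subtlety is that the naive choice $\tildefiltration{G}{i}{} = f^{-i}\filtration{G}{i}{}$ fails: multiplication by $f$ enlarges $\filtration{G}{\bullet}{}$-degrees by up to $n_0$, and iterated commutators of $\delta$ with $f$ leave $\filtration{F}{j}{}$ for $\filtration{F}{j+kn_0}{}$; the choice $N = 1 + Cn_0$ is precisely what is needed to absorb both effects simultaneously.
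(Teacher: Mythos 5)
Your proposal is correct and follows essentially the same route as the paper: you define the same filtration $\tildefiltration{G}{i}{}=f^{-Ci}\filtration{G}{(Ca+1)i}{}$ (your $N=1+Cn_0$ is exactly the paper's $Ca+1$), verify compatibility via the commutation identity of \Cref{commute-f} together with the bound $\delta^{(k)}\in\filtration{F}{j+kn_0}{}$, and obtain the dimension and multiplicity statements from $\dim_\kk\tildefiltration{G}{i}{}\le\dim_\kk\filtration{G}{Ni}{}$. The only cosmetic difference is that the paper packages the expansion as the single operator $f^{C(i+j)}\delta f^{-Cj}\in\filtration{F}{(i)(Ca+1)}{}$ before applying it, whereas you estimate the expansion term by term; the two manipulations are equivalent.
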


\begin{proof}
   Given $f\in R$, choose $a$ such that $f\in \filtration{F}{a}{}$; fix $C$ such that $\filtration{F}{i}{} \subseteq D_{R|\kk}^{Ci}$ for all~$i$.
   Set $\tildefiltration{G}{j}{} \coloneqq \frac{1}{f^{Cj}} \filtration{G}{j(Ca+1)}{}$.
   Then $\tildefiltration{G}{\bullet}{}$ is a finite-dimensional, ascending, exhaustive filtration on $M_f$, and the claims about the dimension and multiplicity of $\tildefiltration{G}{\bu}{}$ follow from the fact that $\dim_\kk \tildefiltration{G}{j}{} \le \dim_\kk \filtration{G}{j(Ca+1)}{}$ for each $j$.
  
   We need to verify that this filtration is compatible with $\filtration{F}{\bullet}{}$.
   Using the notation of \Cref{commute-f}, if the order of $\delta$ is less than or equal to $t$, we get
   \[ f^{j+t} \delta f^{-j} = \delta f^t - \binom{j+t}{1} \delta^{(1)} f^{t-1} + \cdots + (-1)^t \binom{j+t}{t} \delta^{(t)}  \]
   as an equality in $D_{R_f|\kk}$; in particular, this operator restricts to an operator in $D_{R|\kk}$.
   If $\delta\in \filtration{F}{i}{}$, then $\delta$ has order at most $Ci$ and $\delta^{(k)}\in \filtration{F}{i+ak}{}$ for all $k$, so the previous equation shows that $f^{Cj+Ci} \delta f^{-Cj} \in \filtration{F}{i(Ca+1)}{}.$
   We then have
   \begin{align*} \delta \cdot \tildefiltration{G}{j}{} &= \delta \cdot \frac{1}{f^{Cj}} \filtration{G}{j(Ca+1)}{} 
                                                   \subseteq \frac{1}{f^{Cj+Ci}} \filtration{F}{i(Ca+1)}{} \filtration{G}{j(Ca+1)}{} \\&\subseteq \frac{1}{f^{C(i+j)}} \filtration{G}{(i+j)(Ca+1)}{} = \tildefiltration{G}{i+j}{}
   \end{align*}
   as required.
\end{proof}

\begin{theorem}\label{ThmLocCohHol}
   Let $R$ be a commutative $\kk$-algebra.
   Let $\filtration{F}{\bullet}{}$ be a filtration on $D_{R|\kk}$ that is linearly dominated by the order filtration.
   Suppose that $(D_{R|\kk},\filtration{F}{\bullet}{})$ is linearly simple with finite dimension and finite positive multiplicity.
   If $M$ is a holonomic $D_{R|\kk}$-module, then the following hold.
   \begin{enumerate}
      \item $M_f$ is holonomic for every $f\in R$.
      \item $H^j_I(M)$ is holonomic for every finitely generated ideal $I\subseteq R$ and $j\in\NN$.
   \end{enumerate}
\end{theorem}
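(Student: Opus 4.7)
The plan is to derive both parts from the tools already assembled: part $(1)$ follows almost immediately from \Cref{LemmaLocSameDim} combined with Bernstein's inequality, and part $(2)$ follows from $(1)$ together with the closure properties of holonomic modules in \Cref{PropSubQuotSum}.

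For part $(1)$, fix $f\in R$ and assume $M$ is a nonzero holonomic $D_{R|\kk}$-module (otherwise $M_f=0$ is holonomic by convention). Let $\theta = \frac{1}{2}\Dim(\filtration{F}{\bullet}{})$, and let $\filtration{G}{\bullet}{}$ be a filtration on $M$ compatible with $\filtration{F}{\bullet}{}$ having dimension $\theta$ and finite multiplicity. Applying \Cref{LemmaLocSameDim}, we obtain a filtration $\tildefiltration{G}{\bullet}{}$ on $M_f$ compatible with $\filtration{F}{\bullet}{}$ with $\Dim(\tildefiltration{G}{\bullet}{}) \le \theta$, and with finite multiplicity whenever equality holds. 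If $M_f\ne 0$, then by the Bernstein inequality (\Cref{thm:Bern}) applied to the $(D_{R|\kk}, \filtration{F}{\bullet}{})$-module $(M_f,\tildefiltration{G}{\bullet}{})$, we have $\Dim(\tildefiltration{G}{\bullet}{}) \ge \theta$. Thus equality holds, the multiplicity is finite, and $M_f$ is holonomic.

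For part $(2)$, let $\underline f = f_1,\dots,f_\ell$ be a finite set of generators of $I$, and consider the \v{C}ech complex $\Cech^{\bullet}(\underline f; M)$. Each term in this complex is a finite direct sum of modules of the form $M_{f_{i_1}\cdots f_{i_k}}$, which is the localization of $M$ at a single element and hence holonomic by part $(1)$. By \Cref{PropSubQuotSum}\ref{item: holonomicity is preserved by direct sums}, each term of the complex is holonomic; thus the kernels and images of the differentials are holonomic by \Cref{PropSubQuotSum}\ref{item: holonomicity is preserved by submodules and quotients}, and finally $H^j_I(M) = H^j(\Cech^\bullet(\underline f; M))$, being a subquotient of a holonomic module, is itself holonomic by another application of \Cref{PropSubQuotSum}\ref{item: holonomicity is preserved by submodules and quotients}.

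There is not really a hard step here once \Cref{LemmaLocSameDim} and \Cref{PropSubQuotSum} are in hand; the only small point that deserves attention is handling the degenerate case $M_f=0$ in part $(1)$ so that the Bernstein inequality can be invoked only on nonzero modules, and similarly ensuring finiteness of the \v{C}ech complex in part $(2)$, which uses that $I$ is finitely generated.
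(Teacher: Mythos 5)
Your proposal is correct and follows essentially the same route as the paper: part (1) from \Cref{LemmaLocSameDim} (with the dimension pinned down to $\tfrac12\Dim(\filtration{F}{\bullet}{})$ via \Cref{thm:Bern}, which is the step the paper leaves implicit in ``follows immediately''), and part (2) by viewing $H^j_I(M)$ as a subquotient of terms of the \v{C}ech complex and invoking \Cref{PropSubQuotSum}. Your explicit handling of the case $M_f=0$ is a nice touch but not a substantive difference.
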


\begin{proof}
   The claim for a localization of $M$ follows immediately from \Cref{LemmaLocSameDim}.
   Finite direct sums of localizations of $M$ are therefore holonomic $D_{R|\kk}$-modules by \Cref{PropSubQuotSum}\ref{item: holonomicity is preserved by direct sums}.
   Thus, the kernel of any map in the \v{C}ech complex is holonomic by \Cref{PropSubQuotSum}\ref{item: holonomicity is preserved by submodules and quotients}, and since each $H^j_I(M)$ is a quotient of one of these kernels, it is holonomic, again by \Cref{PropSubQuotSum}\ref{item: holonomicity is preserved by submodules and quotients}.
\end{proof}

We now proceed to show that the previous result and \Cref{ThmLenHol} imply that the \v{C}ech cohomology modules of a holonomic $D$-module have finite sets of associated primes.
Toward that end, we first show that simple $D$-modules have at most one associated prime.

\begin{lemma}\label{lem: simple modules have few associated primes}
   Let $R$ be a commutative $\kk$-algebra. 
   If $M$ is a simple $D_{R|\kk}$-module, then $M$ has at most one associated prime.
\end{lemma}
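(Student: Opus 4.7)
The plan is to associate to every prime $\mathfrak{p}$ of $R$ the $\mathfrak{p}$-power torsion submodule
\[
  N_\mathfrak{p} = \{x\in M : \mathfrak{p}^n x = 0 \text{ for some } n\in\NN\},
\]
and show that this is a $D_{R|\kk}$-submodule of $M$. Once this is established, the proof follows easily: if $\mathfrak{p}\in\Ass_R(M)$, there is $x\in M$ with $\Ann_R(x)=\mathfrak{p}$, so $x\in N_\mathfrak{p}$ and $N_\mathfrak{p}\neq 0$; simplicity then forces $N_\mathfrak{p}=M$. If $\mathfrak{q}$ is another associated prime, with $\Ann_R(y)=\mathfrak{q}$, then $y\in M = N_\mathfrak{p}$, so $\mathfrak{p}^n\subseteq\Ann_R(y)=\mathfrak{q}$ for some $n$, whence $\mathfrak{p}\subseteq\mathfrak{q}$; swapping the roles of $\mathfrak{p}$ and $\mathfrak{q}$ gives equality.

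The hard part is the stability of $N_\mathfrak{p}$ under $D_{R|\kk}$, which I would phrase as the more precise statement: if $\delta\in D^k_{R|\kk}$ and $\mathfrak{p}^n x = 0$, then $\mathfrak{p}^{n+k}(\delta x) = 0$. To prove this, I would use the iterated commutator characterization recalled in the definition of differential operators: $[\cdots[[\delta,r_0],r_1],\ldots,r_k]=0$ for all $r_i\in R$ whenever $\delta\in D^k_{R|\kk}$. Combined with the Leibniz-type expansion obtained by iterating $\delta r = r\delta + [\delta,r]$, this yields an identity of the shape
\[
  \delta\cdot r_1\cdots r_N \;=\; r_1\cdots r_N\,\delta \;+\; \sum_{\emptyset\neq I\subseteq\{1,\ldots,N\}} \delta_I\cdot r_{\bar I},
\]
where $\delta_I$ is an iterated commutator of $\delta$ with the $r_i$ for $i\in I$, vanishing whenever $|I|>k$, and $r_{\bar I}=\prod_{j\notin I} r_j$. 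Taking $N=n+k$ and $r_1,\ldots,r_N\in\mathfrak{p}$, the term $r_1\cdots r_N \cdot \delta x$ equals $\delta(r_1\cdots r_N\, x) - \sum_{1\leq|I|\leq k}\delta_I(r_{\bar I}\,x)$. The first summand vanishes because $r_1\cdots r_N\in\mathfrak{p}^{n+k}\subseteq\mathfrak{p}^n$; each remaining summand vanishes because $r_{\bar I}$ is a product of $N-|I|\geq n$ elements of $\mathfrak{p}$, so $r_{\bar I}\,x=0$.

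I expect the main obstacle to be the bookkeeping in the commutator expansion above, which needs to be done carefully enough to verify both the order bound on $\delta_I$ and the count of factors in $r_{\bar I}$. An alternative, perhaps cleaner, presentation would induct directly on the order $k$ of $\delta$, using $r_0\delta = \delta r_0 - [\delta,r_0]$ to peel off one factor at a time: the term $r_1\cdots r_{n+k-1}\,\delta(r_0 x)$ is handled by the fact that $\mathfrak{p}^{n-1}(r_0 x)=0$ together with a secondary induction, while $r_1\cdots r_{n+k-1}[\delta,r_0]x$ is handled by the primary induction since $[\delta,r_0]\in D^{k-1}_{R|\kk}$. Either approach reduces the lemma to the characterization of differential operators by iterated commutators, which is the essential input.
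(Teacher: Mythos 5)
Your proof is correct, but it takes a genuinely different route from the one in the paper. You prove that for each prime $\p$ the $\p$-power torsion submodule $N_{\p}$ is a $D_{R|\kk}$-submodule, the key point being that an operator of order $k$ raises the torsion exponent by at most $k$; this follows, as you say, from the iterated-commutator characterization of order via a Leibniz-type expansion (note the expansion you display actually carries signs, e.g.\ $r_1\cdots r_N\,\delta=\sum_{I}(-1)^{|I|}\delta_I\,r_{\bar I}$ with $r_{\bar I}=\prod_{j\notin I}r_j$, but this is immaterial since every term annihilates $x$). Simplicity then forces $N_{\p}=M$ for any associated prime $\p$, and comparing two associated primes through containments of prime powers gives uniqueness. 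The paper instead fixes a nonzero $u\in M$, uses simplicity to write any other nonzero element as $\delta u$, and uses the explicit commutation formula of \Cref{commute-f} (with powers of a single element $f$) to show that $\sqrt{\ann_R(u)}$ is independent of $u$ and is prime; this common radical is then the only possible associated prime. Your argument isolates a reusable and somewhat more general fact (ideal-power torsion submodules of any $D_{R|\kk}$-module are $D_{R|\kk}$-stable, with controlled exponent growth) and does not need \Cref{commute-f}; the paper's argument yields slightly more, namely that all nonzero elements of $M$ have the same radical annihilator, which is what justifies the remark following the lemma that a simple $D_{R|\kk}$-module has a unique \emph{weakly} associated prime---a refinement your proof, as written, does not immediately give, though it is not needed for the statement itself.
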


\begin{proof}
   This follows from \cite[Lemmas~3.3.16 and 3.3.17]{Bjork79}, which we reproduce here for the reader's convenience.
   Given a simple left $D_{R|\kk}$-module $M$, let $u \in M\smallsetminus \{0\}$, so that $M = D_{R|\kk}u$.
   We claim that $\p \coloneqq \sqrt{\ann_R(u)}$ is prime and independent of the choice of $u$.
   From this, it follows easily that $\p$ is the only possible associated prime of $M$ (though $M$ could potentially fail to have associated primes).

   To verify that $\p$ does not depend on the choice of $u$, let $v$ be another nonzero element of $M$.
   Then there exists $\delta \in D_{R|\kk}$ such that $v = \delta u$.
   Suppose $f\in R$ is such that $f^i u = 0$ for some $i$.
   If $\delta$ has order $j$, then \Cref{commute-f} tells us that $f^{i+j} \delta \in D_{R|\kk} f^i$, and consequently, $f^{i+j}v =0$.
   This shows that $\sqrt{\ann_R(u)} \subseteq \sqrt{\ann_R(v)}$, and switching the roles of $u$ and $v$ we get the reverse containment.

   To verify that $\p$ is prime, let $f,g\in R$ and suppose that $fg \in \p$, but $g \notin \p$.
   Thus, $(fg)^iu=0$ for some $i$, but $v \coloneqq g^i u \ne 0$, and it follows that $f \in \sqrt{\ann_R(v)} = \p$.
   Lastly, note that $\p$ is proper, since $M$ is nontrivial.   
\end{proof}

\begin{proposition}
   \label{prop: finite ass}
   Let $R$ be a commutative $\kk$-algebra.
   Let $\filtration{F}{\bullet}{}$ be a filtration on $D_{R|\kk}$ that is linearly dominated by the order filtration.
   Suppose that $(D_{R|\kk},\filtration{F}{\bullet}{})$ is linearly simple with finite dimension and positive finite multiplicity.
   If $M$ is a holonomic $D_{R|\kk}$-module, then $\Ass_RH^i_I(M)$ is a finite set for every finitely generated ideal $I\subseteq R$ and $i \in \NN$.
\end{proposition}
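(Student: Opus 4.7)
The plan is to combine the three main tools from the preceding material: holonomicity of local cohomology (\Cref{ThmLocCohHol}), finite length of holonomic modules (\Cref{ThmLenHol}), and the fact that simple $D_{R|\kk}$-modules have at most one associated prime (\Cref{lem: simple modules have few associated primes}).

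First, by \Cref{ThmLocCohHol}, for any finitely generated ideal $I \subseteq R$ and any $i \in \NN$, the $D_{R|\kk}$-module $H^i_I(M)$ is holonomic. Next, \Cref{ThmLenHol} guarantees that $H^i_I(M)$ has finite length as a $D_{R|\kk}$-module. Thus there is a composition series
\[0 = N_0 \subsetneq N_1 \subsetneq N_2 \subsetneq \cdots \subsetneq N_t = H^i_I(M)\]
in the category of $D_{R|\kk}$-modules, in which each successive quotient $N_j/N_{j-1}$ is a simple $D_{R|\kk}$-module.

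Now I would invoke the standard fact from commutative algebra that for any short exact sequence $0 \to A \to B \to C \to 0$ of $R$-modules, $\Ass_R(B) \subseteq \Ass_R(A) \cup \Ass_R(C)$. Applying this inductively along the composition series above yields
\[\Ass_R H^i_I(M) \subseteq \bigcup_{j=1}^{t} \Ass_R\bigl(N_j/N_{j-1}\bigr).\]
Each $N_j/N_{j-1}$ is a simple $D_{R|\kk}$-module, so by \Cref{lem: simple modules have few associated primes} each set $\Ass_R(N_j/N_{j-1})$ has at most one element. Hence $\Ass_R H^i_I(M)$ is the union of finitely many finite sets, and is therefore finite.

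There is no real obstacle in this argument: all the nontrivial work has been done in the earlier results, and the proof amounts to assembling them via a composition series and the usual behavior of associated primes in short exact sequences. The only small subtlety worth double-checking is that the subquotients $N_j/N_{j-1}$ are genuine $D_{R|\kk}$-modules (so \Cref{lem: simple modules have few associated primes} applies to them), which is immediate since the $N_j$ form a chain of $D_{R|\kk}$-submodules; and that the associated primes computed over $R$ do not depend on the $D$-module structure, which is true by definition.
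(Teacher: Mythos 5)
Your argument is correct and is essentially identical to the paper's proof: both pass from holonomicity of $H^i_I(M)$ (\Cref{ThmLocCohHol}) to finite $D_{R|\kk}$-length (\Cref{ThmLenHol}), take a composition series, bound the associated primes by the union over the simple subquotients, and finish with \Cref{lem: simple modules have few associated primes}. No further comments are needed.
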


\begin{proof}
   If $M$ is a holonomic $D_{R|\kk}$-module, then so is $H^i_I(M)$ by \Cref{ThmLocCohHol}(ii).
   \Cref{ThmLenHol} then shows that $H^i_I(M)$ has finite length as a $D_{R|\kk}$-module, so there exists an ascending chain 
   \[0=M_0\subseteq M_1\subseteq \cdots \subseteq M_{\ell}= H^i_I(M)\]
   of $D_{R|\kk}$-modules such that each quotient $M_{j}/M_{j-1}$ is simple.
   We conclude that $\Ass_RH^i_I(M)$ is contained in $\bigcup_{j=1}^{\ell} \Ass_R M_{j}/M_{j-1}$, a finite set by \Cref{lem: simple modules have few associated primes}.
\end{proof}

As a side note, we point out that a prime ideal that is minimal over the annihilator of an element of a module is called a \emph{weakly associated prime} of the module \cite[p.~341, Exercise~17]{bourbaki}.
In \Cref{lem: simple modules have few associated primes} we actually showed that simple $D$-modules have a unique weakly associated prime, while in \Cref{prop: finite ass} we showed that the \v{C}ech cohomology modules of a holonomic $D$-module have finitely many weakly associated primes.

\subsection{Holonomicity of the module $R_f[s] \fs$}

Let $R$ be a commutative algebra over a field $\kk$ and $f$ a nonzero element of $R$.
Another $D$-module of classical importance in the smooth case is the $D_{R|\kk}[s]$-module $R_f[s] \fs$, where $s$ is an indeterminate and  $\fs$ is a formal symbol.
This module is characterized by the property that for any integer $t$, any $\delta\in D_{R|\kk}[s]$, and any $a(s) \fs\in R_f[s] \fs$, the equality
 \[ (\delta \act a(s)\fs)\big|_{s\mapsto t} = \delta\big|_{s\mapsto t} (a(t) f^t)\]
 holds, where $(-)|_{s\mapsto t}$ denotes specialization of the indeterminate $s$ to the integer $t$.
 
%\pedro[inline]{
%   What if we use $s\mapsto t$, rather than $s=t$?
%   \[ (\delta \act a(s )\fs)\big|_{s\mapsto t} = \delta\big|_{s\mapsto t} \act (a(t) f^t)\]
%   We could also eliminate the $\act$ on the RHS and use functional notation, to be consistent with the previous section:
%   \[ (\delta \act a(s)\fs)\big|_{s\mapsto t} = \delta\big|_{s\mapsto t} (a(t) f^t)\]
%}
Over a field $\kk$ of characteristic zero, any finitely generated or complete local $\kk$-algebra admits a unique such module up to isomorphism \cite[Theorem~2.12]{square}.
Note that this module is denoted by $M^R[\fs]$ in \loccit

We first give a more concrete description of this module. For $\delta\in D_{R|\kk}[s]$, we define the \emph{order} of $\delta$ to be $\min\{i\in \NN \ | \ \delta\in D^i_{R|\kk}[s]\}$.

\begin{lemma}\label{explicit-action}
   Let $R$ be a finitely generated commutative algebra over a field $\kk$ of characteristic zero, and $f$ a nonzero element of $R$.
   For $\delta\in D_{R|\kk}[s]$,  set $\delta^{(0)}=\delta$, and for each positive integer $i$  define $\delta^{(i)} \in D_{R|\kk}[s]$ inductively by $\delta^{(i)}=[\delta^{(i-1)},f]$. 
Then the action of $D_{R|\kk}[s]$ on $R_f[s] \fs$ is given by the rule
\[ \delta \act a(s) \fs = \sum_{i=0}^{\mathrm{ord}(\delta)} \binom{s}{i} f^{-i} \delta^{(i)} (a(s)) \fs,\]
where $\binom{s}{i}$ is the polynomial $\frac{s\cdot (s-1)\cdots(s-i+1)}{i!}$.	
\end{lemma}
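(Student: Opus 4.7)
The plan is to verify the formula by reducing to the analogous statement without the formal symbol $\fs$, which is precisely \Cref{commute-f}, via the specialization principle that characterizes $R_f[s]\fs$. Both sides of the claimed identity are of the form $b(s)\fs$ with $b(s)\in R_f[s]$ a polynomial of bounded degree in $s$; hence, since $\kk$ has characteristic zero and $R_f$ is therefore a $\QQ$-algebra, a standard Vandermonde argument shows that it suffices to verify the identity after specializing $s$ to every integer $t$ (or even to sufficiently many integers to exceed the degree bound on $b(s)$).

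Carrying this out, I would first note that the operation $\delta\mapsto\delta^{(i)}$ commutes with specialization $s\mapsto t$, since it is defined via iterated commutators with $f$ and specialization $D_{R|\kk}[s]\to D_{R|\kk}$ is a ring homomorphism fixing $f$. Writing $\delta_t\coloneqq \delta|_{s\mapsto t}\in D_{R|\kk}$, the defining property of $R_f[s]\fs$ shows that the left-hand side of the claimed formula specializes to $\delta_t(a(t)f^t)$, while the right-hand side specializes to
\[\sum_{i=0}^{\ord(\delta)}\binom{t}{i}f^{-i}\delta_t^{(i)}(a(t))\cdot f^t=\sum_{i=0}^{\ord(\delta)}\binom{t}{i}f^{t-i}\delta_t^{(i)}(a(t)).\]
But \Cref{commute-f} applied to $\delta_t\in D_{R|\kk}$ yields exactly the operator identity $\delta_t f^t=\sum_i\binom{t}{i}f^{t-i}\delta_t^{(i)}$, and evaluating at $a(t)$ (which commutes with $f^t$ in the commutative ring $R_f$) produces precisely this specialized right-hand side.

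The main obstacle is really bookkeeping rather than substantive mathematics: one needs to ensure that $\ord(\delta_t)\le\ord(\delta)$ so that the ranges of summation match, that specialization is compatible with the action of $D_{R|\kk}[s]$ on $R_f[s]$ in the sense that $\delta^{(i)}(a(s))|_{s\mapsto t}=\delta_t^{(i)}(a(t))$, and that the polynomial identity principle is correctly applied in the possibly non-domain ring $R_f[s]$---for which the characteristic zero hypothesis is essential, as it guarantees the invertibility of the Vandermonde matrix over $R_f$. Once these points are handled, the identity follows at once from \Cref{commute-f} together with the uniqueness (up to isomorphism) of the $D_{R|\kk}[s]$-module $R_f[s]\fs$ provided by the characterization recalled above.
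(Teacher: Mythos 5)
Your argument is correct, but it takes a genuinely different route from the paper's. The paper proves the lemma ``from scratch'': it packages the claimed rule into a map $\Theta\colon D_{R|\kk}[s]\to D_{R_f|\kk}[s]$, $\Theta(\delta)=\sum_i\binom{s}{i}f^{-i}\delta^{(i)}$, verifies by a direct computation (a Leibniz formula for iterated commutators plus a Vandermonde convolution, together with \Cref{commute-f}) that $\Theta$ is a ring homomorphism, so that the rule really defines a $D_{R|\kk}[s]$-module structure; it then checks the specialization property and invokes the uniqueness statement of \cite[Theorem~2.12]{square} to identify this module with $R_f[s]\fs$. You instead take as given that $R_f[s]\fs$ exists with an action satisfying the specialization identity (which is exactly how the paper introduces the module, citing \loccit), and you pin down the value $\delta\act a(s)\fs$ by polynomial interpolation in $s$: both sides are $b(s)\fs$ with $b(s)\in R_f[s]$, specializations at all integers agree by \Cref{commute-f} (note $\delta_t^{(i)}=0$ for $i>\ord(\delta_t)$, so the mismatch of summation ranges is harmless), and a Vandermonde argument over the $\QQ$-algebra $R_f$ forces $b(s)$ to be determined by its integer values. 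Your route is shorter and sidesteps the multiplicativity computation entirely, at the price of leaning on the existence part of \cite[Theorem~2.12]{square}; the paper's route is heavier but self-contained in that it independently exhibits the action (the homomorphism $\Theta$ being a byproduct of possible further use) and uses the citation only for the final identification. Both are complete proofs of the stated lemma.
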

\begin{proof}
First we show that this rule yields a module action. Let $\Theta: D_{R|\kk} \to D_{R_f|\kk}[s]$ be given by the rule
\[\Theta(\delta) = \sum_{i=0}^{\mathrm{ord}(\delta)} \binom{s}{i} f^{-i} \delta^{(i)}.\] We claim that this is a ring homomorphism. For this, it suffices to show that  $\Theta$ respects multiplication. To see this, first observe that by a straightforward induction, for any $\alpha,\beta\in D_{R|\kk}$ we have 
\[(\alpha \beta)^{(i)} = \sum_{j=0}^i \binom{i}{j} \alpha^{(j)} \beta^{(i-j)}.\]
%\pedro[inline]{
%   I added some more details below, to help me see the equality between $\Theta(\alpha\beta)$ and $\Theta(\alpha)\Theta(\beta)$.
%   If y'all prefer the simpler/more compact proof, I'll restore it. 
%}
Then
\begin{align}
  \Theta(\alpha \beta)
  &= \sum_{i=0}^{\mathrm{ord}(\alpha \beta)} \binom{s}{i} f^{-i} (\alpha \beta)^{(i)} \nonumber \\
  &= \sum_{i=0}^{\mathrm{ord}(\alpha \beta)} \binom{s}{i} f^{-i} \sum_{j=0}^i \binom{i}{j} \alpha^{(j)} \beta^{(i-j)}\nonumber \\
  &= \sum_{j,k\geq 0} \binom{s}{j+k} \binom{j+k}{j} f^{-j-k} \alpha^{(j)} \beta^{(k)}\nonumber \\
  &= \sum_{j,k\geq 0} \binom{s}{k}\binom{s-k}{j} f^{-j-k} \alpha^{(j)} \beta^{(k)}.\label{eq: theta of alpha beta}
   \end{align} 
   On the other hand,
     \begin{align*} \Theta(\alpha) \Theta(\beta) &= \left(\sum_{i=0}^{\mathrm{ord}(\alpha)} \binom{s}{i} f^{-i} \alpha^{(i)}\right)\left( \sum_{j=0}^{\mathrm{ord}(\beta)} \binom{s}{j} f^{-j} \beta^{(j)} \right)
       \\&= \sum_{i,j\ge 0}\binom{s}{i} \binom{s}{j} f^{-i} \left( \alpha^{(i)} f^{-j} \right) \beta^{(j)},
     \end{align*}
     and applying \Cref{commute-f} to $\alpha^{(i)} f^{-j}$ we see that
     \[ \Theta(\alpha) \Theta(\beta)  = \sum_{i,j,k\ge 0} \binom{s}{i} \binom{s}{j} \binom{-j}{k} f^{-i-j-k} \alpha^{(i+k)}  \beta^{(j)}.\]
  Grouping the summands according to the value of $l\coloneqq i+k$, this can be rewritten as 
  \[ \Theta(\alpha) \Theta(\beta)  = \sum_{j,l\ge 0} \left(\sum_{i,k\ge 0 \atop i+k=l} \binom{s}{i} \binom{-j}{k}\right) \binom{s}{j} f^{-l-j} \alpha^{(l)}  \beta^{(j)}\]
  where the sum in parentheses is a Vandermonde convolution that adds up to $\binom{s-j}{l}$, so this expression indeed equals \eqref{eq: theta of alpha beta}.
  Thus $\Theta$ is a homomorphism. We can extend $\Theta$ to $D_{R|\kk}[s] \to D_{R_f|\kk}[s]$ by setting $\Theta(s)=s$; we use the same name $\Theta$ for this extension.
  
Now note that there is a $D_{R_f|\kk}[s]$-action $\club$ on $R_f[s] \fs$ given by taking the standard action of $D_{R_f|\kk}$ on $R_f$ and extending in such a way that $s$ commutes with every operator. The action defined in the statement agrees with the action  $\delta \act a(s) \fs \coloneqq \Theta(\delta) \club a(s) \fs$. It follows that this is a module action.

Next, given $a(s) \fs \in R_f[s] \fs$, $\delta \in D_{R|\kk}[s]$, and $t\in \ZZ$, we have 
\begin{align*}
  (\delta \act a(s) \fs )\big|_{s\mapsto t} &= \left(\sum_{i=0}^{\mathrm{ord}(\delta)} \binom{s}{i} f^{-i} \delta^{(i)}(a(s))\fs \right)\Bigg|_{s\mapsto t} \\
                                  &= \sum_{i=0}^{\mathrm{ord}(\delta)} \binom{t}{i} f^{t-i} \big(\delta\big|_{s\mapsto t}\big)^{(i)}(a(t)) \\
                                  &= (\delta\big|_{s\mapsto t} f^t) (a(t)) \\
                                  &= \delta\big|_{s\mapsto t} ( a(t) f^t).
   \end{align*}
We conclude that the module with the stated action is isomorphic to $R_f[s] \fs$ \cite[Theorem~2.12]{square}. 
\end{proof}

We will also use the following, which is well known to the experts (see for instance \cite[Proposition~II.3.13]{KunzADC}).

\begin{lemma}\label{lem:basechange} Let $\kk$ be a field, $R$ a finitely generated commutative $\kk$-algebra, and $s$ an indeterminate. Write $R(s)$ for $R\otimes_{\kk} \kk(s)$. For every $i\in\NN$, there is an isomorphism $D^i_{R(s) | \kk(s) } \cong D^i_{R|\kk} \otimes_{\kk} \kk(s)$.
\qed
\end{lemma}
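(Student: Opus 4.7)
I would follow the standard route for base change of differential operators via the module of principal parts. Recall that for any commutative $\kk$-algebra $R$, the $i$-th module of principal parts is $P^i_{R|\kk} \coloneqq (R \otimes_\kk R)/J^{i+1}$, where $J = \ker(R \otimes_\kk R \to R)$ is the diagonal ideal, and one has the standard identification
\[
D^i_{R|\kk} \cong \Hom_R(P^i_{R|\kk}, R),
\]
where the $R$-module structure on $P^i_{R|\kk}$ is that of the left tensor factor. This reduces the lemma to a base change computation for $\Hom$, which I would carry out in two steps.

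First, I would show that the formation of $P^i$ commutes with the extension $\kk \to \kk(s)$. Using the canonical isomorphism $R(s) \otimes_{\kk(s)} R(s) \cong (R \otimes_\kk R) \otimes_\kk \kk(s)$ and flatness of $\kk(s)$ over $\kk$, one checks that the diagonal ideal $J'$ of $R(s)/\kk(s)$ corresponds to $J \otimes_\kk \kk(s)$, and consequently $(J')^{i+1}$ corresponds to $J^{i+1} \otimes_\kk \kk(s)$. Taking the quotient and using $(R\otimes_\kk R)\otimes_\kk\kk(s) = (R\otimes_\kk R)\otimes_R R(s)$ yields
\[
P^i_{R(s)|\kk(s)} \;\cong\; P^i_{R|\kk} \otimes_R R(s).
\]
Second, since $R$ is a finitely generated $\kk$-algebra, writing $R$ as a quotient of $\kk[x_1, \ldots, x_n]$ shows that $P^i_{R|\kk}$ is generated over $R$ by the classes of monomials of degree at most $i$ in the elements $dx_j \coloneqq x_j \otimes 1 - 1 \otimes x_j$. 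Hence $P^i_{R|\kk}$ is finitely generated over the Noetherian ring $R$, and therefore finitely presented. Standard flat base change for $\Hom$ of a finitely presented module along the flat extension $R \to R(s)$ then gives
\[
\Hom_R(P^i_{R|\kk}, R) \otimes_R R(s) \;\cong\; \Hom_{R(s)}\bigl(P^i_{R|\kk} \otimes_R R(s),\, R(s)\bigr).
\]
Combining with the previous display and using $\Hom_R(P^i_{R|\kk}, R)\otimes_R R(s) = D^i_{R|\kk}\otimes_\kk\kk(s)$ gives the desired isomorphism.

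The main technical points, neither of which is a serious obstacle, are the correspondence of diagonal ideals under the base change (which comes down to flatness of $\kk(s)/\kk$) and the finite presentation of $P^i_{R|\kk}$ (which comes down to Noetherianity of $R$, i.e., to finite generation over $\kk$). An alternative, more hands-on approach would be to define the natural map $D^i_{R|\kk}\otimes_\kk\kk(s)\to D^i_{R(s)|\kk(s)}$ by $\delta\otimes\alpha\mapsto \alpha(\delta\otimes\id_{\kk(s)})$, verify using \Cref{rmk: iterated commutators} that the target indeed lies in $D^i_{R(s)|\kk(s)}$ (the elements of $\kk(s)$ commute with the extended operator, so only commutators with elements of $R$ need checking), and then prove surjectivity by clearing denominators in $s$ to reduce to the polynomial analogue $D^i_{R[s]|\kk[s]} \cong D^i_{R|\kk}\otimes_\kk\kk[s]$. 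I find the principal parts argument cleaner and more conceptual, so that is the route I would present.
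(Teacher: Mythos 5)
Your argument is correct. The paper does not actually prove this lemma---it is stated with a citation to Kunz---and your route via principal parts is precisely the standard argument behind that citation: identify $D^i_{R|\kk}$ with $\Hom_R(P^i_{R|\kk},R)$, check that $P^i$ commutes with the flat extension $\kk\to\kk(s)$ (the diagonal ideal and its powers extend correctly by flatness), note that $P^i_{R|\kk}$ is finitely generated over the Noetherian ring $R$, hence finitely presented, and apply flat base change for $\Hom$. One point worth making explicit: for the way the lemma is used in \Cref{thm: R_f(s) f^s is holonomic}, one wants not merely an abstract isomorphism but the natural map $\delta\otimes\alpha\mapsto\alpha\,(\delta\otimes\id_{\kk(s)})$ to be an isomorphism, so that $\kk(s)\otimes_\kk\filtration{F}{i}{}$ genuinely sits inside $D_{R(s)|\kk(s)}$ as a filtration; your composite isomorphism is this natural map (the $\Hom$ base-change morphism transported through the canonical identification $P^i_{R(s)|\kk(s)}\cong P^i_{R|\kk}\otimes_R R(s)$ corresponds exactly to extending an operator $\kk(s)$-linearly), so a one-line remark to that effect would complete the write-up.
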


\begin{theorem}\label{thm: R_f(s) f^s is holonomic}
 Let $R$ be a commutative algebra over a field $\kk$ of characteristic zero, and $f$ a nonzero element of $R$.
   Let $\filtration{F}{\bullet}{}$ be a filtration on $D_{R|\kk}$ that is linearly dominated by the order filtration.
   Suppose that $(D_{R|\kk},\filtration{F}{\bullet}{})$ is linearly simple with finite dimension and finite positive multiplicity. Set $\tildefiltration{F}{i}{}=\kk(s) \otimes_{\kk} \filtration{F}{i}{}$ for all $i$.
   If $R$ is holonomic as a $D_{R|\kk}$-module, then $R_f(s) \fs$ is a holonomic $(D_{R(s)|\kk(s)},\tildefiltration{F}{\bullet}{})$-module.
\end{theorem}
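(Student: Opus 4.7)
The plan is to construct an explicit filtration on $R_f(s)f^s$ of the required dimension and finite multiplicity, by lifting the holonomic filtration on $R_f$ provided by \Cref{LemmaLocSameDim}. First I would verify that $(D_{R(s)|\kk(s)},\tildefiltration{F}{\bullet}{})$ satisfies the hypotheses needed for holonomicity to make sense. Since $\dim_{\kk(s)}\tildefiltration{F}{i}{} = \dim_\kk \filtration{F}{i}{}$, the dimension and multiplicity of $\tildefiltration{F}{\bullet}{}$ match those of $\filtration{F}{\bullet}{}$. For linear simplicity, given a nonzero $\delta\in\tildefiltration{F}{i}{}$, I would clear denominators to write $h(s)\delta = \sum_k s^k \delta_k$ with $\delta_k\in\filtration{F}{i}{}$, and let $j$ be the largest index with $\delta_j\ne 0$. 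By $C$-linear simplicity of $(D_{R|\kk},\filtration{F}{\bullet}{})$, pick $\alpha_l,\beta_l\in\filtration{F}{Ci}{}$ with $\sum_l \alpha_l \delta_j \beta_l = 1$; then $\sum_l \alpha_l (h(s)\delta)\beta_l$ is a nonzero polynomial $P(s)\in \kk[s]$ of degree $j$, and dividing through gives $\sum_l \frac{\alpha_l h(s)}{P(s)}\,\delta\,\beta_l = 1$ with each factor in $\tildefiltration{F}{Ci}{}$.

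Next, I would fix a holonomic filtration $\filtration{G}{\bullet}{}$ on $R$, choose $a\in\PP$ with $f\in\filtration{F}{a}{}$, and recall from the proof of \Cref{LemmaLocSameDim} that $f^{-Cj}\filtration{G}{j(Ca+1)}{}$ defines a holonomic filtration on $R_f$. I would then define a filtration on $R_f(s)f^s$ by
\[
   \tildefiltration{G}{j}{} := \kk(s)\cdot f^{-Cj}\filtration{G}{j(Ca+1)}{}\cdot f^s,
\]
the $\kk(s)$-subspace of $R_f(s)f^s$ spanned by elements $f^{-Cj}b\,f^s$ with $b\in\filtration{G}{j(Ca+1)}{}$. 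Via the $\kk(s)$-linear identification $R_f(s)f^s\cong R_f(s)$, $\dim_{\kk(s)}\tildefiltration{G}{j}{}$ equals the $\kk$-dimension of $f^{-Cj}\filtration{G}{j(Ca+1)}{}$, so \Cref{LemmaLocSameDim} gives $\Dim(\tildefiltration{G}{\bullet}{}) = \frac{1}{2}\Dim(\tildefiltration{F}{\bullet}{})$ and $\e(\tildefiltration{G}{\bullet}{})<\infty$.

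The main obstacle will be the compatibility check $\tildefiltration{F}{i}{}\tildefiltration{G}{j}{}\subseteq \tildefiltration{G}{i+j}{}$. By $\kk(s)$-linearity, I would reduce to verifying $\delta\act (f^{-Cj}b\,f^s)\in\tildefiltration{G}{i+j}{}$ for $\delta\in\filtration{F}{i}{}$ and $b\in\filtration{G}{j(Ca+1)}{}$. Combining \Cref{explicit-action} with \Cref{commute-f} applied to $\delta^{(k)}f^{-Cj}$ gives
\[
   \delta\act (f^{-Cj}b\,f^s) = \sum_{k,p\geq 0}\binom{s}{k}\binom{-Cj}{p}\,f^{-k-Cj-p}\,\delta^{(k+p)}(b)\,f^s,
\]
where the iterated commutator $\delta^{(k+p)}$ lies in $\filtration{F}{i+(k+p)a}{}$. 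Since commuting with $f$ drops the order of a differential operator by at least one, $\delta^{(k+p)}=0$ whenever $k+p > \ord(\delta)$, so the sum is finite with $k+p\leq \ord(\delta)\leq Ci$. Rewriting $f^{-k-Cj-p} = f^{-C(i+j)}\cdot f^{Ci-k-p}$ with non-negative exponent $Ci-k-p$, and observing that $f^{Ci-k-p}\delta^{(k+p)}(b)\in \filtration{F}{(Ci-k-p)a}{}\cdot\filtration{G}{j(Ca+1)+i+(k+p)a}{}\subseteq \filtration{G}{(i+j)(Ca+1)}{}$, each term of the sum lies in $\kk(s)\cdot f^{-C(i+j)}\filtration{G}{(i+j)(Ca+1)}{}f^s = \tildefiltration{G}{i+j}{}$, completing the verification.
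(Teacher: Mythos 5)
Your construction of the candidate filtration and the compatibility check are essentially the paper's own argument: the paper also takes $\tildefiltration{G}{j}{}=\kk(s)\otimes_\kk \frac{1}{f^{Cj}}\filtration{G}{j(Ca+1)}{}$ (with the same constants $a$ and $C$), expands $\delta\act(\tildefiltration{G}{j}{}\fs)$ via \Cref{explicit-action} and \Cref{commute-f}, and uses $\delta^{(k)}\in\filtration{F}{i+ak}{}$ exactly as you do, so that part of the proposal is correct and not a new route.

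There is, however, a genuine gap in your verification that $(D_{R(s)|\kk(s)},\tildefiltration{F}{\bullet}{})$ is linearly simple. After writing $h(s)\delta=\sum_k s^k\delta_k$ with $\delta_k\in\filtration{F}{i}{}$ and choosing $\alpha_l,\beta_l\in\filtration{F}{Ci}{}$ with $\sum_l\alpha_l\delta_j\beta_l=1$, the element $\sum_l\alpha_l\,h(s)\delta\,\beta_l=\sum_k s^k\bigl(\sum_l\alpha_l\delta_k\beta_l\bigr)$ is \emph{not} a polynomial in $\kk[s]$: only its top coefficient is $1$, while the coefficients for $k<j$ are arbitrary elements of $D_{R|\kk}$, since the $\alpha_l,\beta_l$ were chosen to work only for $\delta_j$. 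So ``dividing through by $P(s)$'' is not legitimate; indeed such an element need not be a unit of $D_{R(s)|\kk(s)}$ (e.g., $s+x$ in the Weyl algebra over $\kk(s)$ is not invertible, although $1=[\partial,s+x]$ shows linear simplicity still reaches $1$). The statement you want is true --- the paper asserts, without proof, that linear simplicity passes to $\tildefiltration{F}{\bullet}{}$ with the same constant --- but it needs a different argument. One correct route: fix $\kk$-bases of $\filtration{F}{Ci}{}$ and of $\filtration{F}{(2C+1)i}{}$; after clearing denominators, the condition $1\in\tildefiltration{F}{Ci}{}\,\delta\,\tildefiltration{F}{Ci}{}$ becomes the statement that the coordinate vector of $1$ lies in the $\kk(s)$-column span of a matrix with entries in $\kk[s]$, i.e.\ an equality of ranks over $\kk(s)$; these ranks agree with the ranks of the specialized matrices at all but finitely many $t\in\kk$, and for generic $t$ the specialization $\sum_k t^k\delta_k$ is a nonzero element of $\filtration{F}{i}{}$, so $C$-linear simplicity of $(D_{R|\kk},\filtration{F}{\bullet}{})$ gives the rank equality there, hence over $\kk(s)$. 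With that repair (or any correct proof of the transfer of linear simplicity, which is also what licenses invoking Bernstein's inequality over $D_{R(s)|\kk(s)}$ to pin the dimension of your filtration at exactly $\frac12\Dim(\tildefiltration{F}{\bullet}{})$), your proof is complete and coincides with the paper's.
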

\begin{proof}
First, by \Cref{lem:basechange}, we see that $\tildefiltration{F}{\bullet}{}$ is a filtration of $D_{R(s)|\kk(s)}$. As $\dim_{\kk} \filtration{F}{i}{} = \dim_{\kk(s)} \tildefiltration{F}{i}{}$, we have \[\Dim (D_{R(s)|\kk(s)},\tildefiltration{F}{\bullet}{}) = \Dim (D_{R|\kk},\filtration{F}{\bullet}{}) \ \ \text{and} \ \ \e (D_{R(s)|\kk(s)},\tildefiltration{F}{\bullet}{}) = \e (D_{R|\kk},\filtration{F}{\bullet}{}).\] The linear simplicity of $(D_{R(s)|\kk(s)},\tildefiltration{F}{\bullet}{})$ also follows from that of $(D_{R|\kk},\filtration{F}{\bullet}{})$, with the same constant.

Choose $a$ such that $f\in \filtration{F}{a}{}$; fix $C$ such that $\filtration{F}{i}{} \subseteq D_{R|\kk}^{Ci}$ for all~$i$.
Let $\filtration{G}{\bullet}{}$ be a filtration on $R$
compatible with $\filtration{F}{\bullet}{}$
of dimension $\frac{1}{2} \Dim(\filtration{F}{\bullet}{})$ and finite multiplicity.
Consider the filtration $\tildefiltration{G}{\bu}{}$ on $R_f(s)$ given by
 \[\tildefiltration{G}{j}{} = \kk(s) \otimes_{\kk} \frac{1}{f^{Cj}} \filtration{G}{j(aC+1)}{}.\]
    Let $\delta\in \tildefiltration{F}{i}{}$, so that $\ord(\delta) \le Ci$.
    Using \Cref{commute-f,explicit-action}, we see that
 \begin{align*} \delta \act \big(\tildefiltration{G}{j}{}\fs \big) 
  &\subseteq \left(\sum_{k=0}^{Ci} f^{-k} \delta^{(k)}  \tildefiltration{G}{j}{} \right) \fs \\  
%   &= \left(\sum_{k=0}^{Ci} f^{-k} \delta^{(k)} f^{-Cj} f^{Cj} \tildefiltration{G}{j}{} \right) \fs \\  
 &\subseteq \left(\sum_{k=0}^{Ci} \sum_{l=0}^{Ci-k} f^{-k-l-Cj} \delta^{(k+l)} f^{Cj} \tildefiltration{G}{j}{}\right) \fs \\
 &\subseteq  \left(\sum_{k=0}^{Ci}  f^{-C(i+j)} \delta^{(k)} f^{Cj} \tildefiltration{G}{j}{} \right) \fs.
 \end{align*}
 Now observe, as in the proof of \Cref{LemmaLocSameDim}, that $\delta^{(k)}\in \tildefiltration{F}{i+ak}{}$ for every $k$, so each $\delta^{(k)}$ in the last sum lies in $\tildefiltration{F}{i(aC+1)}{} = \kk(s)\otimes_\kk \filtration{F}{i(aC+1)}{}$.
 As $f^{Cj} \tildefiltration{G}{j}{} = \kk(s)\otimes_\kk \filtration{G}{j(aC+1)}{}$, the compatibility of $\filtration{G}{\bullet}{}$ with $\filtration{F}{\bullet}{}$ allows us to conclude that 
 \[\delta \act \big(\tildefiltration{G}{j}{} \fs\big) \subseteq  \kk(s)\otimes_\kk \frac1{f^{C(i+j)}}\filtration{G}{(i+j)(aC+1)}{} \fs = \tildefiltration{G}{i+j}{} \fs. \]
 Thus, $\tildefiltration{G}{\bullet}{}\fs$ is a filtration of $R_f(s) \fs$  compatible with $\tildefiltration{F}{\bullet}{}$.
 As in \Cref{ThmLenHol}, one can compute the lengths and see that $R_f(s) \fs$ is holonomic.
\end{proof}

   \begin{corollary}\label{CorBS1}
      Let $R$ be a finitely generated commutative algebra over a field $\kk$ of characteristic zero.
   Let $\filtration{F}{\bullet}{}$ be a filtration on $D_{R|\kk}$ that is linearly dominated by the order filtration.
   Suppose that $(D_{R|\kk},\filtration{F}{\bullet}{})$ is linearly simple with finite dimension and finite positive multiplicity. If $R$ is holonomic as a $D_{R|\kk}$-module, then for every nonzero element $f\in R$, $R_f(s) \fs$ has finite length as a $D_{R(s)|\kk(s)}$-module. \qed %Consequently, every nonzero element in $R$ admits a Bernstein--Sato polynomial. \qed
\end{corollary}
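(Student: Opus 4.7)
The plan is to derive this corollary directly from the preceding \Cref{thm: R_f(s) f^s is holonomic} and the general length bound for holonomic modules, namely \Cref{ThmLenHol}. Given the strong results already established, this corollary should essentially be a matter of assembling the right pieces.

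First, I would verify that the hypotheses of \Cref{thm: R_f(s) f^s is holonomic} are met: we have a finitely generated $\kk$-algebra $R$ in characteristic zero, a filtration $\filtration{F}{\bullet}{}$ on $D_{R|\kk}$ linearly dominated by the order filtration, linear simplicity, and $R$ holonomic as $D_{R|\kk}$-module. The theorem then produces the base-changed filtration $\tildefiltration{F}{\bullet}{}$ on $D_{R(s)|\kk(s)}$ and asserts that $R_f(s)\fs$ is a holonomic $(D_{R(s)|\kk(s)},\tildefiltration{F}{\bullet}{})$-module.

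Second, I would check that the filtered $\kk(s)$-algebra $(D_{R(s)|\kk(s)},\tildefiltration{F}{\bullet}{})$ itself satisfies the hypotheses of \Cref{ThmLenHol}. This is essentially observed in the opening of the proof of \Cref{thm: R_f(s) f^s is holonomic}: because $\dim_{\kk}\filtration{F}{i}{} = \dim_{\kk(s)}\tildefiltration{F}{i}{}$ for each $i$, the dimension and multiplicity are preserved under the base change, and linear simplicity of $(D_{R|\kk},\filtration{F}{\bullet}{})$ with constant $C$ transfers to $(D_{R(s)|\kk(s)},\tildefiltration{F}{\bullet}{})$ with the same constant $C$. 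Hence $(D_{R(s)|\kk(s)},\tildefiltration{F}{\bullet}{})$ is a $C$-linearly simple filtered $\kk(s)$-algebra with finite dimension and positive finite multiplicity.

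Finally, I would invoke \Cref{ThmLenHol} applied to the holonomic module $R_f(s)\fs$ over the filtered algebra $(D_{R(s)|\kk(s)},\tildefiltration{F}{\bullet}{})$ to conclude that $R_f(s)\fs$ has finite length as a $D_{R(s)|\kk(s)}$-module, with an explicit bound given by the formula in \Cref{ThmLenHol}. There is no real obstacle here; the only bookkeeping is to confirm that the length in the category of $D_{R(s)|\kk(s)}$-modules is what is being bounded, which is exactly what \Cref{ThmLenHol} delivers since the filtration $\tildefiltration{F}{\bullet}{}$ is genuinely a filtration of $D_{R(s)|\kk(s)}$ by \Cref{lem:basechange}. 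The whole argument is essentially a one-line citation of the two preceding results, with the verification of the hypotheses being the only content.
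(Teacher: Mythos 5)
Your proposal is correct and matches the paper's intended argument: the corollary carries a \qed precisely because it follows by combining \Cref{thm: R_f(s) f^s is holonomic} (whose proof already records that $(D_{R(s)|\kk(s)},\tildefiltration{F}{\bullet}{})$ inherits linear simplicity, finite dimension, and positive finite multiplicity from $(D_{R|\kk},\filtration{F}{\bullet}{})$) with the length bound of \Cref{ThmLenHol}. Nothing is missing.
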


As in the classical case of smooth $\kk$-algebras \cite{Ber72,SatoPoly}, we can also deduce the existence of Bernstein--Sato polynomials in this singular setting. Namely, 
 for every $f\in R$, there exist $\delta\in D_{R|\KK}[s]$ and a nonzero polynomial $b(s)\in \KK[s]$ such that the following functional equation is satisfied:
\begin{equation*}\label{eq: BS functional eq for single poly}
\delta \act f\fs=b(s) \fs.
\end{equation*}
The \emph{Bernstein--Sato polynomial} associated to $f$ is the unique monic polynomial  of smallest degree satisfying such functional equation.
See \cite{square} for more insight on the Bernstein--Sato theory in singular $\kk$-algebras, and the recent survey \cite{SurveyBS} for more information on the role of this polynomial in algebraic geometry and commutative algebra.

\begin{corollary}\label{CorBS2}
   Let $R$ be a finitely generated commutative algebra over a field $\kk$ of characteristic zero.
   Let $\filtration{F}{\bullet}{}$ be a filtration on $D_{R|\kk}$ that is linearly dominated by the order filtration.
   Suppose that $(D_{R|\kk},\filtration{F}{\bullet}{})$ is linearly simple with finite dimension and finite positive multiplicity. If $R$ is holonomic as a $D_{R|\kk}$-module, then every nonzero element $f\in R$  admits a Bernstein--Sato polynomial. \qed
\end{corollary}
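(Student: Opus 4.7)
The plan is to run the standard Bernstein-style extraction of a functional equation from the finite length of $R_f(s)\fs$ as a $D_{R(s)|\kk(s)}$-module, which is exactly the content of \Cref{CorBS1}.

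Inside $R_f(s)\fs$, consider the chain of $D_{R(s)|\kk(s)}$-submodules $N_n \coloneqq D_{R(s)|\kk(s)} \cdot f^n\fs$ for $n \geq 0$. Since $f$ acts as a zeroth-order operator and $f \act f^n\fs = f^{n+1}\fs$, we have $N_{n+1} \subseteq N_n$, and finite length forces this descending chain to stabilize. Therefore there exist $n \geq 0$ and $\delta \in D_{R(s)|\kk(s)}$ with $\delta \act f^{n+1}\fs = f^n\fs$. Using \Cref{lem:basechange} to identify $D_{R(s)|\kk(s)} \cong D_{R|\kk} \otimes_\kk \kk(s)$, I clear $\kk(s)$-denominators in $\delta$ to produce $\tilde\delta \in D_{R|\kk}[s]$ and a nonzero $q(s) \in \kk[s]$ with
\[\tilde\delta \act f^{n+1}\fs = q(s)\, f^n\fs.\]

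The last step is a shift: letting $\tilde\delta' \in D_{R|\kk}[s]$ be the operator obtained by substituting $s \mapsto s-n$ in the polynomial coefficients of $\tilde\delta$, I claim $\tilde\delta' \act f\fs = q(s-n)\fs$. This is a functional equation of the form $\delta\act f\fs = b(s)\fs$ with $\delta\in D_{R|\kk}[s]$ and $b(s)\in \kk[s]\smallsetminus\{0\}$, and the existence of a Bernstein--Sato polynomial follows since the set of such $b(s)$ forms a nonzero ideal of $\kk[s]$. To verify the shift identity I would appeal to \Cref{explicit-action}: two elements of $R_f(s)\fs$ are equal if and only if their specializations at $s=t$ agree for infinitely many integers $t$, where $(\alpha \act a(s)\fs)|_{s\mapsto t} = \alpha|_{s\mapsto t}(a(t)f^t)$. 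Specializing both sides of the claimed identity at $s=t$ reduces it to $\tilde\delta|_{s\mapsto t-n}(f^{t+1}) = q(t-n)\,f^t$, which is the $s\mapsto t-n$ specialization of the displayed equation above. The main obstacle is this shift verification, since the $D_{R|\kk}[s]$-action is governed by the intricate formula of \Cref{explicit-action}; every other step is a formal consequence of finite length and the base change of rings of differential operators.
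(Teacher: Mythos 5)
Your proposal is correct and is exactly the argument the paper has in mind: the paper gives no written proof of \Cref{CorBS2}, deducing it ``as in the classical case'' from the finite length of $R_f(s)\fs$ established in \Cref{CorBS1}, which is precisely your chain-stabilization, denominator-clearing, and $s\mapsto s-n$ shift. The shift verification you flag as the main obstacle does go through as you sketch, since both sides lie in $R_f[s]\fs$ and, in characteristic zero, agreement of the specializations $a(t)f^t$ for infinitely many integers $t$ forces $a(s)=0$.
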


\subsection{Generalized Bernstein filtrations}\label{sec: gbf}

The aim of this subsection is to extend the Bernstein filtration beyond the polynomial ring case. 
We assume the following:

\begin{setup} \label{Notation_graded}
   Let $\kk$ be a field,
   and $R$ a \cga, as in \Cref{conv: commutative graded ring}.
	Set $\m=\bigoplus_{i>0} R_i$, $n=\dim_\kk \m/\m^2$, and $w=\max\{t\in\NN\;|\;[ \m/\m^2]_t\neq 0\}$.
	We fix a polynomial ring $S=\kk[x_1,\ldots,x_n]$ over $\kk$, a grading on $S$ with $\deg x_i=w_i\in \PP$, and a homogeneous ideal $I\subseteq S$ such that $R\cong S/I$ as graded rings.
        We observe that $w=\max\{w_1,\ldots,w_n\}$.
\end{setup}

\begin{definition} \label{Bernstein_filtration}
   Let $R$ be as in \Cref{Notation_graded}, and $a$ a real number greater than $w$.
   The \emph{generalized Bernstein filtration $\gbf{\bullet}{a,R}$ on $D_{R|\kk}$ with slope $a$} is given by
   \[ \gbf{i}{a,R}\coloneqq \kk \cdot \{ \delta \in D_{R|\kk} \ \text{homogeneous} \ | \ \deg(\delta) + a \ord(\delta) \leq i \}.\]
   If the slope is clear from the context, or irrelevant, then we simply write $\gbf{\bullet}{R}$.
\end{definition}

\begin{example}
   \label{ex: Bernstein filtration of positively graded polynomial rings}
   Let $S$ be a standard graded polynomial ring over a field of characteristic zero. Then the generalized Bernstein filtration with slope $2$ is just the usual Bernstein filtration on the Weyl algebra.
   If $S=\kk[x_1,\dots,x_n]$ is a positively graded polynomial ring with $\deg(x_i)=w_i$, we may interpret the generalized Bernstein filtration with integral slope $a$, in the context of \Cref{rem: weights}, as the filtration associated to the weight vector $(\underline{w}, a-\underline{w})\coloneqq (w_1,\dots,w_n, a-w_1,\dots, a-w_n)\in \ZZ^{2n}$.			
\end{example}

\begin{lemma}\label{RemAdmSlopeW}
   If $R$ is as in \Cref{Notation_graded} and $\gbf{\bullet}{a,R}$ is a generalized Bernstein filtration on $D_{R|\kk}$, then the following hold.
\begin{enumerate}
\item 	$\dim_\kk \gbf{i}{a,R}$ is finite for every $i$.
\item There exists $\varepsilon>0$ such that $\deg(\delta) + (a-\varepsilon) \ord(\delta) > 0$ for all homogeneous $\delta\in D_{R|\kk}\smallsetminus \kk$.
\end{enumerate}	
\end{lemma}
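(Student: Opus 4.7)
My plan is to prove part (ii) first, then bootstrap it into part (i). The key external input is the bound $\deg(\delta) \ge -w\ord(\delta)$ for every nonzero homogeneous $\delta \in D_{R|\kk}$ noted earlier in the excerpt (which follows from the analogous bound on $D_{S|\kk}$ via the quotient description \eqref{opsonquot}, since both the grading and the order are preserved under the isomorphism).

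For part (ii), I would set $\varepsilon \coloneqq (a-w)/2$, which is strictly positive by the hypothesis $a > w$, and split into two cases. If $\ord(\delta) = 0$, then $\delta$ is multiplication by a homogeneous element of $R$; since $\delta \notin \kk = R_0$, this forces $\deg(\delta) \ge 1$, and thus $\deg(\delta) + (a-\varepsilon)\ord(\delta) = \deg(\delta) > 0$. If instead $\ord(\delta) \ge 1$, then using $\deg(\delta) \ge -w\ord(\delta)$ I obtain
\[
\deg(\delta) + (a-\varepsilon)\ord(\delta) \ge (a - \varepsilon - w)\ord(\delta) = \tfrac{a-w}{2}\,\ord(\delta) > 0,
\]
which finishes part (ii).

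For part (i), the strategy is to show that $\gbf{i}{a,R}$ is contained in a finite sum of spaces of the form $[D^k_{R|\kk}]_d$ (homogeneous operators of degree $d$ and order at most $k$) with only finitely many $(d,k)$ occurring, and that each such space is finite-dimensional over $\kk$. To bound $\ord$ and $\deg$ of a nonzero homogeneous generator $\delta$ of $\gbf{i}{a,R}$ outside $\kk$, I combine the defining inequality $\deg(\delta) + a\ord(\delta) \le i$ with the conclusion of part (ii): subtracting gives $\varepsilon \ord(\delta) < i$, so $\ord(\delta) < i/\varepsilon$, and then $\deg(\delta) \le i$ from above while $\deg(\delta) \ge -w\ord(\delta) > -wi/\varepsilon$ from below. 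The scalars in $\kk \subseteq \gbf{i}{a,R}$ contribute only a one-dimensional summand.

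It remains to observe that $[D^k_{R|\kk}]_d$ is finite-dimensional for each $(d,k)$. Since $[D^k_{R|\kk}]_d$ is a subquotient of $[D^k_{S|\kk}]_d$ by \eqref{opsonquot}, it suffices to handle $S$: in characteristic zero $D^k_{S|\kk}$ is a free $S$-module on the finitely many monomials $\partial^\beta$ with $|\beta|\le k$, each of degree $-\sum w_j\beta_j$, so $[D^k_{S|\kk}]_d$ is a finite direct sum of finite-dimensional graded pieces of $S$; in positive characteristic the same argument applies with the divided-power differential operators $\tfrac{1}{p^e!}\partial_j^{p^e}$ replacing the ordinary ones, since $D^k_{S|\kk}$ is again a finitely generated graded $S$-module. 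Summing over the finitely many admissible $(d,k)$ gives the finite-dimensionality of $\gbf{i}{a,R}$.

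I do not anticipate a serious obstacle: the only nontrivial ingredient is the a priori bound $\deg(\delta) \ge -w\ord(\delta)$, which is already recorded, and a small amount of bookkeeping to confirm the finite-dimensionality of graded pieces of the order filtration on $D_{R|\kk}$.
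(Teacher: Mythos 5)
Your proof is correct and takes essentially the same route as the paper: the paper's proof just says the claims hold for a polynomial ring by explicit computation and descend to $R$ because every differential operator on $R$ is the image of one on $S$ (via \eqref{opsonquot}, preserving degree and order), which is exactly the reduction you carry out, combined with the already-recorded bound $\deg(\delta)\ge -w\,\ord(\delta)$. Your write-up simply fills in the details the paper leaves implicit (the choice $\varepsilon=(a-w)/2$, the bounds $\ord(\delta)<i/\varepsilon$ and $-wi/\varepsilon<\deg(\delta)\le i$, and the finite-dimensionality of the graded pieces of $D^k_{S|\kk}$ via its free basis of (divided-power) operators).
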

\begin{proof}
	By explicit computation, this holds for a polynomial ring. It then follows for $R$ from the fact that a differential operator on a quotient of a polynomial ring is the image of a differential operator on the polynomial ring.
\end{proof}

\begin{proposition}
   If $R$ is as in \Cref{Notation_graded}, then every generalized Bernstein filtration on $D_{R|\kk}$ is a $\kk$-algebra filtration in the sense of \Cref{notation:filtration}.
\end{proposition}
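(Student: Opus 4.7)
My approach would be to verify, one at a time, each of the requirements in Definition \ref{notation:filtration}: that $\gbf{\bullet}{a,R}$ is an $\NN$-indexed, ascending, exhaustive chain of finite-dimensional $\kk$-subspaces of $D_{R|\kk}$ with $\gbf{0}{a,R} = \kk$ and $\gbf{i}{a,R} \gbf{j}{a,R} \subseteq \gbf{i+j}{a,R}$. By construction $\gbf{i}{a,R}$ is a $\kk$-subspace, indexed by $\NN$, and the ascending property is immediate from the definition, since $\deg(\delta) + a\ord(\delta) \le i$ implies the same bound with $i$ replaced by $i+1$. Finite-dimensionality of $\gbf{i}{a,R}$ is exactly Lemma \ref{RemAdmSlopeW}(i). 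This leaves three conditions requiring a small amount of genuine verification: $\gbf{0}{a,R} = \kk$, multiplicativity, and exhaustiveness.

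\textbf{The identity} $\gbf{0}{a,R} = \kk$ is where I would use Lemma \ref{RemAdmSlopeW}(ii) directly. The inclusion $\kk \subseteq \gbf{0}{a,R}$ is trivial since scalars are homogeneous of degree $0$ and order $0$. For the reverse inclusion, I would take a homogeneous $\delta$ with $\deg(\delta) + a\ord(\delta) \le 0$ and split into two cases: if $\ord(\delta) > 0$, Lemma \ref{RemAdmSlopeW}(ii) yields $\varepsilon > 0$ with $\deg(\delta) + a\ord(\delta) > \varepsilon \ord(\delta) > 0$, a contradiction; if $\ord(\delta) = 0$, then $\delta$ is multiplication by a homogeneous element of $R$ of degree $\le 0$, which by the positivity of the grading on $R$ forces $\delta \in R_0 = \kk$. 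For \textbf{multiplicativity} it suffices, by bilinearity, to check the containment on homogeneous generators: if $\delta_1, \delta_2$ are homogeneous with $\deg(\delta_\ell) + a\ord(\delta_\ell) \le i_\ell$, then $\delta_1 \delta_2$ is again homogeneous with $\deg(\delta_1\delta_2) = \deg(\delta_1) + \deg(\delta_2)$ and $\ord(\delta_1\delta_2) \le \ord(\delta_1) + \ord(\delta_2)$, so that $\deg(\delta_1\delta_2) + a\ord(\delta_1\delta_2) \le i_1 + i_2$.

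\textbf{Exhaustiveness} is the step I expect to require the most care, as it relies on the (essentially standard) fact that every element of $D_{R|\kk}$ is a \emph{finite} sum of homogeneous operators. My plan is to obtain this from the presentation \eqref{opsonquot}: any $\delta\in D_{R|\kk}$ lifts to some $\tilde\delta\in D_{S|\kk}$ stabilizing $I$, which in turn is a finite $\kk$-linear combination of the monomial-like homogeneous operators $x^\alpha \partial^\beta$ (or their divided-power analogues in positive characteristic). Since $I$ is homogeneous, each graded piece of $\tilde\delta$ also stabilizes $I$, so the resulting finite decomposition descends to an expression of $\delta$ as a finite $\kk$-linear combination of homogeneous operators in $D_{R|\kk}$. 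Each homogeneous summand $\delta'$ has $\deg(\delta') + a\ord(\delta') \in \RR_{\ge 0}$ (nonnegative by the previous paragraph), so $\delta' \in \gbf{i}{a,R}$ for any integer $i \ge \deg(\delta') + a\ord(\delta')$; taking the maximum over the finitely many summands places $\delta$ in some $\gbf{i}{a,R}$. This completes the verification.
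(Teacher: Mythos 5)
Your proposal is correct and follows essentially the same route as the paper's proof: multiplicativity from $\deg(\delta_1\delta_2)\le\deg(\delta_1)+\deg(\delta_2)$ and $\ord(\delta_1\delta_2)\le\ord(\delta_1)+\ord(\delta_2)$ on homogeneous operators, finite-dimensionality from \Cref{RemAdmSlopeW}(i), the identification $\gbf{0}{a,R}=\kk$ from \Cref{RemAdmSlopeW}(ii), and exhaustiveness from the fact that every element of $D_{R|\kk}$ is a finite sum of homogeneous operators, each lying in $\gbf{i}{a,R}$ for $i=\lceil\deg(\delta)+a\ord(\delta)\rceil$. The only difference is presentational: you spell out the $\ZZ$-gradedness of $D_{R|\kk}$ via the presentation \eqref{opsonquot} and split the $\gbf{0}{a,R}=\kk$ argument into the cases $\ord(\delta)>0$ and $\ord(\delta)=0$, details the paper treats as already established or immediate.
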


\begin{proof}
   The generalized Bernstein filtration $\gbf{\bullet}{a,R}$ satisfies $\gbf{i}{a,R}\gbf{j}{a,R} \subseteq \gbf{i+j}{a,R}$ for each $i$ and $j$ because $\ord(\delta_1\delta_2)\leq \ord(\delta_1)+\ord(\delta_2)$ and $\deg(\delta_1\delta_2)\leq \deg(\delta_1)+\deg(\delta_2)$ for any homogeneous operators $\delta_1,\delta_2 \in D_{R|\kk}$.
   This filtration is  ascending, and it is finite dimensional by \Cref{RemAdmSlopeW}(i).
   It is exhaustive because any homogeneous operator $\delta \in D_{R|\kk}$ lies in $\gbf{i}{a,R}$ for $i=\lceil\deg(\delta)+a\ord(\delta)\rceil$.
   Finally, if $\delta$ is a homogeneous element of $\gbf{0}{a,R}$, then $\deg(\delta) + a \ord(\delta) \le 0$, so \Cref{RemAdmSlopeW}(ii) tells us that $\delta\in \kk$.
   Thus, $\gbf{0}{a,R} \subseteq \kk$, and since the reverse inclusion is obvious, equality holds.
\end{proof} 

\begin{proposition}\label{prop:compare-gbf-order}
   If $R$ is as in \Cref{Notation_graded} and $\gbf{\bullet}{R}$ is a generalized Bernstein filtration on $D_{R|\kk}$, then $\gbf{\bullet}{R}$ is linearly dominated by the order filtration.
\end{proposition}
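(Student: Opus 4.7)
The plan is to show that any homogeneous operator appearing in the spanning set of $\gbf{i}{a,R}$ has order bounded by a linear function of $i$; since $\gbf{i}{a,R}$ is the $\kk$-span of such operators, this yields the desired containment $\gbf{i}{a,R}\subseteq D^{Ci}_{R|\kk}$ for a suitable constant $C\in\PP$.

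The key input is the lower bound on degrees of differential operators in terms of their order, already observed in the discussion following \Cref{conv: commutative graded ring}: every homogeneous $\delta\in D_{R|\kk}$ of order $j$ satisfies $\deg(\delta)\geq -jw$. This comes from the fact that on the polynomial ring $S$ the partial derivatives $\partial_i$ have degree $-w_i\geq -w$, together with the description \eqref{opsonquot} of $D_{R|\kk}$ as a subquotient of $D_{S|\kk}$ that preserves both degree and order.

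First I would fix a homogeneous $\delta\in D_{R|\kk}\smallsetminus\{0\}$ with $\deg(\delta)+a\ord(\delta)\leq i$, and let $j=\ord(\delta)$. Combining the bound $\deg(\delta)\geq -jw$ with this defining inequality gives
\[
(a-w)j \;=\; -jw + aj \;\leq\; \deg(\delta)+a\,\ord(\delta) \;\leq\; i.
\]
Since $a>w$ by the hypothesis on the slope in \Cref{Bernstein_filtration}, we have $a-w>0$, and therefore $\ord(\delta) = j \leq i/(a-w)$. Setting $C=\lceil 1/(a-w)\rceil\in\PP$, this forces $\delta\in D^{Ci}_{R|\kk}$.

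Finally, since $\gbf{i}{a,R}$ is by definition the $\kk$-span of such homogeneous $\delta$, and $D^{Ci}_{R|\kk}$ is a $\kk$-subspace of $D_{R|\kk}$, we conclude $\gbf{i}{a,R}\subseteq D^{Ci}_{R|\kk}$ for all $i\in\NN$, which is exactly the statement that $\gbf{\bullet}{R}$ is linearly dominated by the order filtration (in the sense extended to the order filtration just before the $D$-module subsection). There is no real obstacle here: the only subtle point is ensuring that the slope hypothesis $a>w$ is used precisely to guarantee $a-w>0$ so that the constant $C$ is a well-defined positive integer.
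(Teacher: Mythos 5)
Your proof is correct and takes essentially the same route as the paper's: the paper invokes \Cref{RemAdmSlopeW}(ii) to get some $\varepsilon>0$ with $\deg(\delta)+(a-\varepsilon)\ord(\delta)>0$ for homogeneous $\delta\notin\kk$ and sets $C=\lceil 1/\varepsilon\rceil$, whereas you make that $\varepsilon$ explicit via the bound $\deg(\delta)\geq -w\ord(\delta)$, effectively taking $\varepsilon=a-w$ and $C=\lceil 1/(a-w)\rceil$. In both cases the point is the same linear bound $\ord(\delta)\leq Ci$ for the homogeneous spanning elements of $\gbf{i}{a,R}$, followed by taking $\kk$-spans.
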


\begin{proof}
   Suppose $\gbf{\bullet}{R}$ has slope $a$.
   By \Cref{RemAdmSlopeW}(ii) we have some $\varepsilon>0$ such that $\deg(\delta) + (a-\varepsilon) \ord(\delta) >0$ for every homogeneous operator $\delta$ not in $\kk$.
   Set $C = \lceil 1/\varepsilon\rceil$.
   Then, for $\delta$ homogeneous in $\gbf{i}{R} \smallsetminus \kk$ we have $\varepsilon \ord(\delta) < \deg(\delta) + a \ord(\delta) \leq i$, so $\ord(\delta) < Ci$, from which it follows that $\gbf{i}{R} \subseteq D_{R|\kk}^{Ci}$.
\end{proof}

Next, we show that all generalized Bernstein filtrations on $D_{R|\kk}$ are linearly equivalent.

\begin{proposition}\label{prop:gbf-lin-eq}
   If $R$ is as in \Cref{Notation_graded}, then any two generalized Bernstein filtrations $\gbf{\bullet}{a,R}$ and $\gbf{\bullet}{b,R}$ on $D_{R|\kk}$ are linearly equivalent.
\end{proposition}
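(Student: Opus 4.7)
The plan is to argue that one inclusion is trivial and the other follows by controlling the order of an operator in $\gbf{i}{a,R}$ linearly in $i$, exactly as in the proof of \Cref{prop:compare-gbf-order}. Without loss of generality assume $a \le b$.

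First, for the easy direction, I would observe that for every homogeneous $\delta \in D_{R|\kk}$ we have $\deg(\delta) + a\,\ord(\delta) \le \deg(\delta) + b\,\ord(\delta)$ since $a \le b$ and $\ord(\delta)\ge 0$. Thus $\gbf{j}{b,R} \subseteq \gbf{j}{a,R}$ for every $j\in\NN$, so in particular $\gbf{\bullet}{b,R}$ is linearly dominated by $\gbf{\bullet}{a,R}$.

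For the reverse direction, I would first derive a linear bound on $\ord(\delta)$ for $\delta$ homogeneous in $\gbf{i}{a,R}$. By \Cref{RemAdmSlopeW}(ii) applied to slope $a$, there exists $\varepsilon > 0$ (one can take any $\varepsilon < a-w$) such that $\deg(\delta) + (a-\varepsilon)\ord(\delta) > 0$ for every homogeneous $\delta \notin \kk$. Combined with $\deg(\delta) + a\,\ord(\delta) \le i$, this gives $\varepsilon\,\ord(\delta) < i$, hence $\ord(\delta) \le K i$ for $K\coloneqq \lceil 1/\varepsilon\rceil$. Then I would simply write
\[
\deg(\delta) + b\,\ord(\delta) = \bigl(\deg(\delta) + a\,\ord(\delta)\bigr) + (b-a)\,\ord(\delta) \le i + (b-a)K i = \bigl(1 + (b-a)K\bigr) i,
\]
so setting $C \coloneqq \lceil 1+(b-a)K \rceil$ we obtain $\gbf{i}{a,R} \subseteq \gbf{Ci}{b,R}$ for every $i\in\NN$, as desired.

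There is no real obstacle here; the only subtlety is making sure the constant $\varepsilon$ from \Cref{RemAdmSlopeW}(ii) is uniform for all homogeneous operators, which is precisely what that lemma provides. Combining the two directions yields the linear equivalence, completing the proof.
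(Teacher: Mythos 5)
Your proof is correct and follows essentially the same route as the paper: the containment $\gbf{j}{b,R}\subseteq\gbf{j}{a,R}$ is immediate from $a\le b$, and the reverse linear domination comes from bounding $\ord(\delta)$ linearly in $i$ using the fact that $a>w$ (the paper packages this by choosing $C$ with $b\le C(a-w)$, while you extract the same bound from \Cref{RemAdmSlopeW}(ii), as in \Cref{prop:compare-gbf-order}). The only cosmetic difference is the explicit constant, so no further comment is needed.
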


\begin{proof}
   If $a\leq b$, choose an integer $C$ such that $b\leq C (a - w)$.
   Then we have $\gbf{i}{b,R}\subseteq \gbf{i}{a,R}$ and $\gbf{i}{a,R}\subseteq \gbf{Ci}{b,R}$ for all $i$.
\end{proof}

The following is an immediate consequence of \Cref{rem: linear simplicity only depends on linear equivalence class,prop:gbf-lin-eq}.

\begin{proposition}
   \label{prop: linear simplicity is independent of slope}
   If $R$ is as in \Cref{Notation_graded}, and $\gbf{\bullet}{a,R}$ and $\gbf{\bullet}{b,R}$ are generalized Bernstein filtrations on $D_{R|\kk}$, then $(D_{R|\kk},\gbf{\bullet}{a,R})$ is linearly simple if and only if $(D_{R|\kk},\gbf{\bullet}{b,R})$ is linearly simple.
   \qed
\end{proposition}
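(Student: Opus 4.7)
The plan is essentially a two-step application of previously established facts, as the statement is flagged in the paper itself as an immediate consequence. First I would invoke \Cref{prop:gbf-lin-eq} to conclude that $\gbf{\bullet}{a,R}$ and $\gbf{\bullet}{b,R}$ are linearly equivalent filtrations on $D_{R|\kk}$, so there exist constants $K, L \in \PP$ with $\gbf{i}{a,R} \subseteq \gbf{Ki}{b,R}$ and $\gbf{i}{b,R} \subseteq \gbf{Li}{a,R}$ for every $i \in \NN$.

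Then I would apply \Cref{rem: linear simplicity only depends on linear equivalence class}, which asserts that the property of being linearly simple is invariant under linear equivalence of filtrations (more precisely, if one is $C$-linearly simple, the other is $KLC$-linearly simple). Combining these two observations, linear simplicity of $(D_{R|\kk}, \gbf{\bullet}{a,R})$ implies that of $(D_{R|\kk}, \gbf{\bullet}{b,R})$; the reverse direction follows by symmetry of linear equivalence. There is no real obstacle here: the only substantive content has already been absorbed into the cited remark and proposition, so the ``proof'' is simply to cite both and invoke symmetry.
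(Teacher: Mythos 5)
Your proposal is correct and matches the paper's argument exactly: the paper states this proposition as an immediate consequence of \Cref{prop:gbf-lin-eq} (linear equivalence of any two generalized Bernstein filtrations) together with \Cref{rem: linear simplicity only depends on linear equivalence class}, which is precisely the two-step citation you give.
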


Other consequences of \Cref{prop:gbf-lin-eq}---that the dimension, and the positivity and finiteness of the multiplicity of a Bernstein filtration are all independent of the slope---are explored in the next subsection.
Coming back to the case of a polynomial ring $S=\kk[x_1,\dots,x_n]$ over a field $\kk$ of characteristic zero, we now show that $D_{S|\kk}$ with a generalized Bernstein filtration is linearly simple.

\begin{proposition}\label{prop:bavula-poly}
   Let $S$ be a positively graded polynomial ring over a field $\kk$ of characteristic zero.
   If $\gbf{\bullet}{S}$ is a generalized Bernstein filtration on $D_{S|\kk}$, then $(D_{S|\kk},\gbf{\bullet}{S})$ is linearly simple; that is, there exists a positive integer $C$ such that for every $i\in \NN$ and every nonzero $\delta\in \gbf{i}{S}$, we have $1\in \gbf{Ci}{S} \cdot \delta \cdot \gbf{Ci}{S}$.
\end{proposition}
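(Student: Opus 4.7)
My plan is to fix any slope $a>w$ and prove linear simplicity directly via an effective simplicity argument for the Weyl-type algebra $D_{S|\kk}$. (By \Cref{prop: linear simplicity is independent of slope} one is free to choose a convenient slope, but the argument below works for arbitrary $a>w$.) Given $\delta \in \gbf{i}{a,S}\smallsetminus\{0\}$, I will write $\delta=\sum c_{\alpha\beta}x^\alpha\partial^\beta$ in normal form and construct an explicit iterated commutator of the form $\mathrm{ad}(\partial)^l\, \mathrm{ad}(x)^m(\delta)=c\cdot 1$ with $c\in\kk\smallsetminus\{0\}$ and with $|m|,|l|$ bounded linearly in $i$. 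Expanding this commutator as a sum of sandwich products around $\delta$ and bounding the filtration level of each factor will then exhibit $1\in \gbf{Ci}{a,S}\cdot\delta\cdot\gbf{Ci}{a,S}$.

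The order of operations is as follows. First, the assumption $\delta\in\gbf{i}{a,S}$ forces each monomial with $c_{\alpha\beta}\neq 0$ to satisfy $w\cdot\alpha-w\cdot\beta+a|\beta|\le i$; since $a-w_j\ge a-w>0$ and $w\cdot\alpha\ge w_{\min}|\alpha|$ with $w_{\min}=\min_j w_j$, this yields the bounds $\ord(\delta)\le i/(a-w)$ and $|\alpha|\le i/w_{\min}$ for every $\alpha$ with $c_{\alpha,\beta}\neq 0$ for some $\beta$. Next, I would choose $(\alpha^*,\beta^*)$ with $c_{\alpha^*\beta^*}\neq 0$ and $|\beta^*|=\ord(\delta)$, and set $m=\beta^*$. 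Using that the $\mathrm{ad}(x_j)$ are pairwise commuting derivations with $\mathrm{ad}(x_j)(x^\alpha\partial^\beta)=-\beta_j x^\alpha\partial^{\beta-e_j}$, the choice $|m|=\ord(\delta)$ forces every term with $\beta\neq m$ to vanish along the iteration, giving
\[ \mathrm{ad}(x)^m(\delta)=(-1)^{|m|}m!\sum_{\alpha} c_{\alpha,m}\, x^\alpha, \]
a nonzero polynomial $p(x)$. Then, letting $\alpha^\dagger$ be a componentwise maximal element of $\{\alpha: c_{\alpha,m}\neq 0\}$ and setting $l=\alpha^\dagger$, the same derivation-style computation for $\mathrm{ad}(\partial_j)$ gives $\mathrm{ad}(\partial)^l(p(x))=l!\,c_{\alpha^\dagger,m}$, a nonzero scalar (nonzero thanks to $\mathrm{char}\,\kk=0$).

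The final step is to expand $\mathrm{ad}(\partial)^l\,\mathrm{ad}(x)^m(\delta)$ via iterated application of $[z,A]=zA-Az$ as a signed sum of $2^{|m|+|l|}$ terms of the form $\pm P\delta Q$, where $P$ and $Q$ are products of $x_j$'s and $\partial_j$'s whose multi-index counts together partition $m$ and $l$. Since $x_j\in\gbf{w_j}{a,S}$ and $\partial_j\in\gbf{a-w_j}{a,S}$, the multiplicativity of the filtration yields $P,Q\in\gbf{|m|w+|l|a}{a,S}$. Combining with the earlier bounds on $|m|$ and $|l|$ gives $|m|w+|l|a\le Ci$ for the constant $C=\lceil w/(a-w)+a/w_{\min}\rceil$. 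Dividing by the nonzero scalar $l!\,c_{\alpha^\dagger,m}$ then displays $1$ as a $\kk$-linear combination of elements of $\gbf{Ci}{a,S}\cdot\delta\cdot\gbf{Ci}{a,S}$, completing the proof.

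I expect the main obstacle to be the combinatorial bookkeeping in the expansion step: keeping track of the distribution of $x_j$'s and $\partial_j$'s between the left and right factors $P,Q$ across all $2^{|m|+|l|}$ summands, and verifying that the crude bound $P,Q\in\gbf{|m|w+|l|a}{a,S}$ is uniform across all of them. Everything else reduces to standard Weyl-algebra derivation identities, adapted to the weighted graded setting.
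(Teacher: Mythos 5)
Your argument is correct, but it takes a genuinely different route from the paper's. The paper proves linear simplicity by induction on the filtration level: it assumes (after reducing to integral slope) that some monomial of $\delta$ has $\alpha_j\neq 0$ or $\beta_j\neq 0$, forms a \emph{single} commutator $[\delta,\partial_j]$ or $[\delta,x_j]$, which is a nonzero element of a strictly smaller filtration piece, and lets the constant $C=\max_j\{w_j,a-w_j\}$ telescope through the induction. You instead run the classical ``one-shot'' simplicity argument for the Weyl algebra: from $\delta\in\gbf{i}{a,S}$ you extract the explicit linear bounds $\ord(\delta)\le i/(a-w)$ and $|\alpha|\le i/w_{\min}$, choose $m=\beta^*$ of maximal length and a componentwise-maximal $\alpha^\dagger$, and show $\operatorname{ad}(\partial)^{l}\operatorname{ad}(x)^{m}(\delta)$ is a nonzero scalar (characteristic zero enters via $m!\,l!$); expanding the iterated commutator as $\sum\pm P\delta Q$ and bounding each word $P,Q$ in $\gbf{|m|w+|l|a}{a,S}$ gives the conclusion with $C=\lceil w/(a-w)+a/w_{\min}\rceil$. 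Each approach has its merits: yours avoids the reduction to integral slope (it works verbatim for any real slope $a>w$) and produces a fully explicit constant, at the cost of the combinatorial bookkeeping in the expansion of the iterated commutator; the paper's induction avoids that expansion and any bounds on $|\alpha|$, $|\beta|$ altogether, since only one commutator is formed per step, yielding a shorter proof with the cleaner constant $\max_j\{w_j,a-w_j\}$. Your minor notational slip (the factor $(-1)^{|m|}m!$ already absorbed into $p(x)$ reappears missing in the final scalar) is immaterial, since the scalar is nonzero either way and $\gbf{Ci}{a,S}\cdot\delta\cdot\gbf{Ci}{a,S}$ is a $\kk$-subspace.
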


\begin{proof}
   Suppose $\gbf{\bullet}{S}$ has slope $a$, which we may assume is an integer, by \Cref{prop: linear simplicity is independent of slope}.
   Suppose that $S = \kk[x_1,\ldots, x_n]$ and $\deg(x_j) = -\deg(\partial_j) = w_j$ for each $j$, so that $x_j \in \gbf{w_j}{S}$  and $\partial_j \in \gbf{a-w_j}{S}$.
   We shall verify that the claim holds for $C = \max\{w_j,a-w_j\}$.
   This is true when $i=0$, so suppose that $i>0$ and $\delta\in \gbf{i}{S}\smallsetminus\{0\}$.
   Let $\varepsilon_j$ denote the $j$-th standard basis vector of $\ZZ^n$.
   The identity $[ x^{\alpha}\partial^{\beta}, \partial_j] = -\alpha_j x^{\alpha-\varepsilon_j}\partial^{\beta}$ shows that, as long as the normal form of $\delta$ contains a monomial $x^\alpha \partial^\beta$ with $\alpha_j \ne 0$ for some $j$, the commutator $[\delta,\partial_j]$ is a nonzero element of $\gbf{i-w_j}{S}$.
   An inductive argument then allows us to assume that $1 \in \gbf{C(i-w_j)}{S} [\delta,\partial_j]\gbf{C(i-w_j)}{S}$.
   But $[\delta,\partial_j]$ lies in $\gbf{a-w_j}{S}\cdot\delta\cdot\gbf{a-w_j}{S}$, so we conclude that
   \[1\in \gbf{C(i-w_j)}{S}\gbf{a-w_j}{S}\cdot\delta\cdot\gbf{a-w_j}{S}\gbf{C(i-w_j)}{S}\subseteq \gbf{Ci}{S} \cdot \delta \cdot \gbf{Ci}{S}\]
   where the last containment follows from the inequalities $C \ge a-w_j$ and $w_j \ge 1$.

   If the normal form of $\delta$ contains a monomial $\partial^\beta$ with $\beta_j \ne 0$ for some $j$, we mimic the previous argument using, instead, the identity $[\partial^{\beta}, x_j] = \beta_j \partial^{\beta-\varepsilon_j}$.
   This time around, we use the inequalities $C\ge w_j$ and $a-w_j \ge 1$, the latter being a consequence of our assumption that $a\in \NN$ and $a>w$.

   Finally, if neither condition is satisfied, then $\delta \in \kk\smallsetminus \{0\}$, and the result follows.
\end{proof}

We conclude this subsection by showing that the associated graded ring of a generalized Bernstein filtration on a polynomial ring is itself a polynomial ring.

\begin{proposition}
   \label{prop: associated graded ring is polynomial ring}
   Let $S=\kk[x_1,\dots,x_n]$ be a positively graded polynomial ring over a field $\kk$ of characteristic zero.
   If $\gbf{\bullet}{S}$ is a generalized Bernstein filtration on $D_{S|\kk}$ with an integral slope, then the associated graded ring $\gr(\gbf{\bullet}{S})$ is a polynomial ring in $2n$ variables, and in particular, is commutative.
\end{proposition}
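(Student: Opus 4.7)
The plan is to identify $\gr(\gbf{\bullet}{a,S})$ with a weighted polynomial ring in $2n$ variables by exhibiting an explicit $\kk$-algebra isomorphism. The starting point is the normal form on $D_{S|\kk}$: it has $\kk$-basis $\{x^\alpha\partial^\beta : \alpha,\beta\in\NN^n\}$, and the monomial $x^\alpha\partial^\beta$ is homogeneous of order $|\beta|$ and degree $\sum_i w_i\alpha_i - \sum_j w_j\beta_j$. Setting
\[\|\alpha,\beta\|\coloneqq \sum_{i=1}^n w_i\alpha_i + \sum_{j=1}^n(a-w_j)\beta_j,\]
the Bernstein weight of $x^\alpha\partial^\beta$ equals $\|\alpha,\beta\|$, so $\gbf{i}{a,S}$ has $\kk$-basis $\{x^\alpha\partial^\beta : \|\alpha,\beta\|\leq i\}$ and consequently $\gbf{i}{a,S}/\gbf{i-1}{a,S}$ has a $\kk$-basis indexed by $\{(\alpha,\beta):\|\alpha,\beta\|=i\}$.

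Next, I would check that the symbols $\xi_i\coloneqq [x_i]$ and $\eta_j\coloneqq [\partial_j]$ in $\gr(\gbf{\bullet}{a,S})$ pairwise commute; the only nontrivial case is $\xi_i\eta_j = \eta_j\xi_i$. In $D_{S|\kk}$ we have $x_i\partial_j - \partial_j x_i = -\delta_{ij}\in\kk = \gbf{0}{a,S}$, so it suffices to place this constant inside $\gbf{w_i+a-w_j-1}{a,S}$. The hypotheses that $a$ is an integer with $a>w\geq w_j$ force $a\geq w_j+1$, hence $w_i+a-w_j-1\geq w_i\geq 1$, and thus $\gbf{0}{a,S}\subseteq \gbf{w_i+a-w_j-1}{a,S}$, as required.

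With commutativity in hand, the universal property of polynomial rings yields a $\kk$-algebra homomorphism
\[\Phi\colon \kk[\xi_1,\ldots,\xi_n,\eta_1,\ldots,\eta_n]\longrightarrow \gr(\gbf{\bullet}{a,S}), \qquad \xi_i\mapsto [x_i],\ \ \eta_j\mapsto[\partial_j].\]
Grading the source by $\deg\xi_i=w_i$ and $\deg\eta_j=a-w_j$ makes $\Phi$ a graded homomorphism, and its $i$-th graded piece sends the monomial basis $\{\xi^\alpha\eta^\beta:\|\alpha,\beta\|=i\}$ bijectively to the basis of $\gbf{i}{a,S}/\gbf{i-1}{a,S}$ described in the first paragraph. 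Hence $\Phi$ is a graded bijection, and therefore an isomorphism of graded $\kk$-algebras. The only mildly subtle step is the commutation verification; once one checks that $\kk=\gbf{0}{a,S}$ lies in the right filtration piece---which uses both the integrality of $a$ and the strict inequality $a>w$---the remainder is a direct comparison of monomial bases.
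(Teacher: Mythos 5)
Your proof is correct, but it takes a more self-contained route than the paper, which disposes of the statement by citation: the paper's proof observes that $\gbf{\bullet}{a,S}$ is the filtration attached to the weight vector $(\underline{w},a-\underline{w})\in\ZZ^{2n}$, notes that each coordinate sum $w_j+(a-w_j)=a$ is positive, and then invokes Smith's results on weight filtrations of the Weyl algebra \cite{GregSmith} to conclude that the associated graded ring is a polynomial ring in $2n$ variables. You instead reprove this special case from scratch: the normal form gives a monomial basis adapted to the filtration, the symbols of $x_i$ and $\partial_j$ commute in $\gr(\gbf{\bullet}{a,S})$ because their commutator $-\delta_{ij}$ lies in $\gbf{0}{a,S}\subseteq\gbf{w_i+a-w_j-1}{a,S}$ (which is exactly where $a>w$ and the integrality of $a$ enter, mirroring the positivity condition the paper uses), and the induced graded map from the weighted polynomial ring is a bijection on monomial bases. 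The one step you pass over a little quickly is the assertion that $\gbf{i}{a,S}$ has basis the monomials $x^\alpha\partial^\beta$ of weight at most $i$: by definition $\gbf{i}{a,S}$ is spanned by homogeneous operators $\delta$ with $\deg(\delta)+a\ord(\delta)\le i$, and one should note that every monomial occurring in such a $\delta$ has the same degree as $\delta$ and order at most $\ord(\delta)$, hence weight at most $i$ since $a>0$; this is routine, not a gap. Your approach buys self-containedness and makes visible where each hypothesis is used; the paper's approach buys brevity and the greater generality of arbitrary weight vectors with positive coordinate sums.
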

 
\begin{proof}
   Let $a \in \NN$ be the slope of $\gbf{\bullet}{S}$, and assume that $\deg(x_j)= - \deg(\partial_j) = w_j$, for each~$j$.
   As in \Cref{ex: Bernstein filtration of positively graded polynomial rings}, $\gbf{\bullet}{S}$ is the filtration associated to the weight vector $(\underline{w}, a-\underline{w}) \in \ZZ^{2n}$.
   Since $w_j + (a-w_j) > 0$ for all~$j$, the associated graded ring $\gr(\gbf{\bullet}{S})$ is a polynomial ring in $2n$ variables \cite[Proposition~2.2, Example~2.4]{GregSmith}.
\end{proof}

\subsection{Bernstein algebras}

We retain the hypotheses of \Cref{Notation_graded}.
In order to have a good theory of holonomic $D_{R|\kk}$-modules generalizing what is known in the regular case, we would like not only to have Bernstein's inequality, but also to have the dimension of $D_{R|\kk}$ be twice the dimension of $R$.
We note first that dimension for the ring of differential operators is not well defined in general.

\begin{example}
   Let $\kk=\FF_p$ and $R=\kk[x_1,\dots,x_d]$.
   Note that $D_{R|\kk}$ is not finitely generated, so the notion of standard filtration for $D_{R|\kk}$ as a $\kk$-algebra as in \Cref{def:algebra-std-filtration} does not apply.
   If we set $\filtration{F}{i}{}=\big[D_{R|\kk}^{(\ell_i)}\big]_{\leq i}$ where $\ell_i=\lfloor \log_p(\log_p(i))\rfloor$, then one verifies easily that $\filtration{F}{\bullet}{}$ is a filtration in the sense of \Cref{notation:filtration} and that $\Dim(\filtration{F}{\bullet}{})=d$.
   \Cref{prop:any-larger-dim} then tells us that for any $\lambda\geq d$, there exists a filtration on $D_{R|\kk}$ with dimension $\lambda$.
\end{example}

However, the generalized Bernstein filtrations yield a well-defined notion of dimension for the ring of differential operators, and finitely generated $D_{R|\kk}$-modules, as the following result shows.

\begin{proposition}
   \label{prop: independence of slope}
   If $R$ is a \cga, as in \Cref{Notation_graded}, and $\gbf{\bullet}{a,R}$ and $\gbf{\bullet}{b,R}$ are generalized Bernstein filtrations on $D_{R|\kk}$, then 
   \begin{enumerate}
      \item $\Dim(\gbf{\bullet}{a,R})=\Dim(\gbf{\bullet}{b,R})$,
      \item $\e(\gbf{\bullet}{a,R})<\infty$ if and only if $\e(\gbf{\bullet}{b,R})<\infty$, and
      \item $\e(\gbf{\bullet}{a,R})>0$ if and only if $\e(\gbf{\bullet}{b,R})>0$.
   \end{enumerate}
   Moreover, if $M$ is a finitely generated left or right $D_{R|\kk}$-module, then, in the notation of \Cref{def:module-mult-alg-filt},
   \begin{enumerate}\setcounter{enumi}{3}
      \item $\Dim(M,\gbf{\bullet}{a,R})=\Dim(M,\gbf{\bullet}{b,R})$,
      \item $\e(M,\gbf{\bullet}{a,R})<\infty$ if and only if $\e(M,\gbf{\bullet}{b,R})<\infty$, and
      \item $\e(M,\gbf{\bullet}{a,R})>0$ if and only if $\e(M,\gbf{\bullet}{b,R})>0$.
   \end{enumerate}
\end{proposition}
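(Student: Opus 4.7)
The plan is to reduce all six claims to the linear equivalence of any two generalized Bernstein filtrations on $D_{R|\kk}$, which has already been established in \Cref{prop:gbf-lin-eq}. Once linear equivalence is in hand, the statements about dimension and multiplicity follow formally from the general results on linearly equivalent filtrations proved in \Cref{SecBackgrpoundFil}.

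More concretely, the first step is simply to invoke \Cref{prop:gbf-lin-eq}, which tells us that $\gbf{\bullet}{a,R}$ and $\gbf{\bullet}{b,R}$ are linearly equivalent as filtrations on the $\kk$-algebra $D_{R|\kk}$. Parts (i), (ii), and (iii) are then immediate consequences of \Cref{prop:lin-equiv} applied to these two filtrations: \Cref{prop:lin-equiv} says exactly that linearly equivalent filtrations have the same dimension, and their multiplicities agree on both finiteness and positivity.

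For parts (iv), (v), and (vi), the plan is to apply \Cref{prop:lin-equiv-alg-mod}. Let $\filtration{G}{\bullet}{a}$ and $\filtration{G}{\bullet}{b}$ be good filtrations on $M$ compatible with $\gbf{\bullet}{a,R}$ and $\gbf{\bullet}{b,R}$, respectively; such filtrations exist because $M$ is finitely generated (take any standard filtration). Since $\gbf{\bullet}{a,R}$ is linearly dominated by $\gbf{\bullet}{b,R}$, \Cref{prop:lin-equiv-alg-mod} yields $\Dim(\filtration{G}{\bullet}{a}) \leq \Dim(\filtration{G}{\bullet}{b})$ and the corresponding implications for finiteness and vanishing of multiplicity. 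Swapping the roles of $a$ and $b$ and using the reverse linear domination gives the opposite inequality and implications, so equality of dimensions holds, and the biconditionals for $\e<\infty$ and $\e>0$ follow. By \Cref{def:module-mult-alg-filt}, these invariants agree with $\Dim(M,\gbf{\bullet}{a,R})$, $\e(M,\gbf{\bullet}{a,R})$, etc., so (iv)--(vi) follow.

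There is essentially no obstacle remaining at this stage: all the substantive work has been done in proving \Cref{prop:gbf-lin-eq} and in the formalism of \Cref{SecBackgrpoundFil}. The proposition is really a packaging result, showing that the numerical invariants coming from the generalized Bernstein filtration depend only on the algebra $R$ (and the module $M$), not on the choice of slope.
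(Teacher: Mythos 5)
Your proposal is correct and follows essentially the same route as the paper, whose proof is precisely the one-line citation of \Cref{prop:gbf-lin-eq} together with \Cref{prop:lin-equiv} for parts (i)--(iii) and \Cref{prop:lin-equiv-alg-mod} for parts (iv)--(vi). Your spelled-out use of the two directions of linear domination for the module statements is exactly what that citation encodes.
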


\begin{proof}
   This follows from \Cref{prop:lin-equiv,prop:lin-equiv-alg-mod,prop:gbf-lin-eq}.
\end{proof}

\begin{definition}
   Let $R$ be as in \Cref{Notation_graded}, and $M$ a finitely generated $D_{R|\kk}$-module.
   Then $\dim(D_{R|\kk}) \coloneqq \Dim(\gbf{\bullet}{R})$ and $\dim(M) \coloneqq \Dim(M,\gbf{\bullet}{R})$, where $\gbf{\bullet}{R}$ is a generalized Bernstein filtration on $D_{R|\kk}$.
\end{definition}

\Cref{prop: independence of slope} ensures that these definitions do not depend on the choice of the generalized Bernstein filtration. 

The class of algebras for which we have a good theory of holonomic $D_{R|\kk}$-modules is the following.

\begin{definition} \label{Bernstein_algebra}
   Let $R$ be as in \Cref{Notation_graded}, and $\gbf{\bullet}{R}$ a generalized Bernstein filtration on $D_{R|\kk}$.
   We say that $R$ is a \emph{Bernstein algebra} if %there exists an admissible slope $a$ such that
   \begin{enumerate}
      \item $(D_{R|\kk},\gbf{\bullet}{R})$ is linearly simple,
      \item 	$\Dim(\gbf{\bullet}{R})=2\dim(R)$, and
      \item $0<\e(\gbf{\bullet}{R})<\infty$.
   \end{enumerate}
\end{definition}

Note that \Cref{prop: independence of slope,prop: linear simplicity is independent of slope} show that conditions (i)--(iii) in the previous definition do not depend on the choice of the slope for $\gbf{\bullet}{R}$.

We give sufficient conditions for a \cga\ to be a Bernstein algebra in terms of the \emph{differential signature} introduced by Brenner and the third and fourth authors of this manuscript \cite{BJNB}.

\begin{definition}[\cite{SurveySP,BJNB}]
   Let $R$ be a \cga\ of dimension $d$, with maximal homogeneous ideal $\m$.
   \begin{enumerate}
      \item For each positive integer $i$, the \emph{$i$-th differential power of $\m$} is the ideal
      \[\m\dif{i}=\big\{f\in R \, | \, \delta(f)\in \m \hbox{ for all } \delta\in D^{i-1}_{R|\kk}\big\}.\]
      \item The \emph{differential signature of $R$} is the real number
      \[\ds(R)=\limsup\limits_{i\to\infty} \frac{d!\cdot\dim_\kk R/\m\dif{i}}{i^d}.\]
   \end{enumerate}
\end{definition}

\begin{remark} \label{positive-diff-sig: R}
   Let $G$ be a finite group that acts linearly on a polynomial ring $R$ over a field $\kk$ of characteristic zero.
   Then the differential signature of the ring of invariants $R^G$ is positive \cite[Theorem~6.15]{BJNB}.
   Moreover, the differential signature of a strongly $F$-regular $\kk$-algebra is also positive \cite[Theorem~5.17]{BJNB}.
\end{remark}

The next result presents a perfect pairing between a certain quotient of $D^{i-1}_{R|\kk}$ and $R/\m\dif{i}$.
This was implicitly introduced in previous work regarding convergence of differential signature \cite[Section~8]{BJNB}.

\begin{lemma}[{\cite[Lemma~3.4]{DNB}}]\label{LemmaPairing}
   If $R$ is a \cga\ with homogeneous maximal ideal $\m$, and
   \[\cJ_{R|\kk}=\{\delta \in D_{R|\kk} \;|\; \delta(R)\subseteq \m\},\]
   then there exists a non-degenerate $\kk$-bilinear function 
\[D^{i-1}_{R|\kk}/\big(\cJ_{R|\kk} \cap D^{i-1}_{R|\kk}\big)\times R/\m\dif{i} \to R/\m\]
defined by $( \overline{\delta },\overline{r} ) \mapsto \overline{\delta(r)}$.
\qed
\end{lemma}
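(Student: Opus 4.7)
The plan is a direct unwinding of definitions. First, the map $(\delta,r)\mapsto \overline{\delta(r)}$ is $\kk$-bilinear as a function $D^{i-1}_{R|\kk}\times R\to R/\m$, since every differential operator is $\kk$-linear and the assignment $\delta\mapsto \delta(r)$ is linear in $\delta$ by the $\kk$-vector space structure on $D_{R|\kk}$. I would then check that this descends to the claimed quotients. On the operator side, if $\delta\in \cJ_{R|\kk}\cap D^{i-1}_{R|\kk}$, then $\delta(R)\subseteq \m$ by the definition of $\cJ_{R|\kk}$, so $\overline{\delta(r)}=0$ for every $r$. On the element side, if $r\in \m\dif{i}$, then by the very definition of the $i$-th differential power, $\delta(r)\in \m$ for every $\delta\in D^{i-1}_{R|\kk}$, so again $\overline{\delta(r)}=0$.

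Non-degeneracy then falls out tautologically from the same definitions. If $\overline\delta\ne 0$ in $D^{i-1}_{R|\kk}/(\cJ_{R|\kk}\cap D^{i-1}_{R|\kk})$, then $\delta\notin \cJ_{R|\kk}$, so by the definition of $\cJ_{R|\kk}$ there exists $r\in R$ with $\delta(r)\notin \m$; this $r$ witnesses $\overline{\delta(r)}\ne 0$ in $R/\m$. Conversely, if $\overline r\ne 0$ in $R/\m\dif{i}$, then the statement $r\notin \m\dif{i}$ is, by definition, precisely the existence of some $\delta\in D^{i-1}_{R|\kk}$ with $\delta(r)\notin \m$, and that $\delta$ witnesses non-degeneracy from the other side.

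I do not foresee any real obstacle here, since the two quotients $\cJ_{R|\kk}\cap D^{i-1}_{R|\kk}$ and $\m\dif{i}$ are set up precisely as the left and right radicals of the natural evaluation pairing $(\delta,r)\mapsto \overline{\delta(r)}$. In effect, the lemma is a restatement of those definitions: $\m\dif{i}$ is defined so that its elements are exactly those in the right-kernel of the pairing, while $\cJ_{R|\kk}\cap D^{i-1}_{R|\kk}$ consists by definition of exactly those operators in the left-kernel, so quotienting by both yields a non-degenerate pairing by construction.
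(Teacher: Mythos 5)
Your proof is correct: the map is visibly $\kk$-bilinear, it descends to the quotients because $\cJ_{R|\kk}\cap D^{i-1}_{R|\kk}$ and $\m\dif{i}$ are by definition the left and right kernels of the evaluation pairing $(\delta,r)\mapsto \overline{\delta(r)}$, and non-degeneracy is exactly the statement that nothing outside these subspaces pairs to zero with everything. The paper itself gives no argument for this lemma---it is quoted from an external reference---so your direct unwinding of the definitions is precisely the expected proof, and it suffices for the way the lemma is used later (the finite-dimensional dual-basis extraction in \Cref{thm:dim-Bernstein-filtration} is carried out there, not here).
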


\begin{lemma}\label{LemmaFiltR}
   Let $R$ be as in \Cref{Notation_graded}, and $\gbf{\bullet}{R}$ a generalized Bernstein filtration on $D_{R|\kk}$.
   If $\filtration{G}{\bullet}{}$ is the $D_{R|\kk}$-module filtration on $R$ given by $\filtration{G}{i}{}=\gbf{i}{R} \cdot 1 \subseteq R$, then $\Dim(\filtration{G}{\bullet}{})=\dim(R)$, and $\e(\filtration{G}{\bullet}{}) = \e(R)$, which is finite and positive.
\end{lemma}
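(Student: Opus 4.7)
The plan is to show that the filtration $\filtration{G}{\bullet}{}$ on $R$ coming from multiplying $\mathcal{B}^\bullet_{R}$ against the unit element coincides with the filtration on $R$ by degree, i.e., $\filtration{G}{i}{} = R_{\le i}\coloneqq \bigoplus_{d\le i} R_d$. Once this is established, the dimension and multiplicity claims are essentially just the Hilbert function description of $R$ as a graded $\kk$-algebra.

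First, I would check both inclusions defining $\filtration{G}{i}{} = R_{\le i}$. Let $a>w$ be the slope of $\gbf{\bullet}{R}$. For the inclusion $R_{\le i}\subseteq \filtration{G}{i}{}$, take any homogeneous $r\in R_d$ with $d\le i$; then left multiplication by $r$ is a homogeneous operator of degree $d$ and order $0$, so $r\in \gbf{d}{R}\subseteq \gbf{i}{R}$ and $r=r\cdot 1 \in \filtration{G}{i}{}$. For the reverse inclusion, it is enough to check on a homogeneous $\delta\in D_{R|\kk}$ with $\deg(\delta)+a\ord(\delta)\le i$. Since $a>0$, we get $\deg(\delta)\le i$; applying $\delta$ to $1\in R_0$ yields an element of $R_{\deg(\delta)}$, which is $0$ if $\deg(\delta)<0$ (as $R$ is positively graded) and otherwise lies in $R_{\le i}$.

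Second, I would verify that $\filtration{G}{\bullet}{}$ is indeed an $(D_{R|\kk},\gbf{\bullet}{R})$-module filtration: since $\gbf{i}{R}\gbf{j}{R}\subseteq \gbf{i+j}{R}$, we have
\[\gbf{i}{R}\cdot \filtration{G}{j}{} = \gbf{i}{R}\gbf{j}{R}\cdot 1 \subseteq \gbf{i+j}{R}\cdot 1 = \filtration{G}{i+j}{},\]
and $\filtration{G}{i}{}=R_{\le i}$ is clearly finite-dimensional, ascending, and exhaustive.

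Finally, with $\filtration{G}{i}{}=R_{\le i}$, we have $\dim_\kk \filtration{G}{i}{}=\sum_{d\le i} \dim_\kk R_d$, which is the accumulated Hilbert function of $R$. Standard Hilbert-series theory for a positively graded finitely generated $\kk$-algebra of Krull dimension $d=\dim R$ shows that this accumulated Hilbert function agrees, for $i\gg 0$, with a quasi-polynomial in $i$ of degree $d$ whose leading coefficient is a positive rational (in particular finite and nonzero). This immediately gives $\Dim(\filtration{G}{\bullet}{})=d=\dim(R)$ and that $\e(\filtration{G}{\bullet}{})=\limsup_{i\to\infty} \dim_\kk R_{\le i}/i^d$ is finite and positive; this latter quantity is precisely what the paper denotes by $\e(R)$.

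There is essentially no obstacle here: the content of the lemma is the identification $\gbf{i}{R}\cdot 1 = R_{\le i}$, which follows directly from the definition of the generalized Bernstein filtration together with the fact that the nonnegative grading on $R$ kills operators of negative degree applied to $1$. Everything else is a citation to classical Hilbert function theory.
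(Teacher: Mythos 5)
Your argument is correct and is essentially the paper's own proof: the paper likewise reduces the lemma to the identification $\gbf{i}{R}\cdot 1=[R]_{\le i}$ (which you spell out in more detail) and then cites the theory of Hilbert functions for finitely generated graded $\kk$-algebras.
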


\begin{proof}
   As $\filtration{G}{i}{}=\gbf{i}{R} \cdot 1 = [R]_{\leq i}$, the result follows from the theory of Hilbert functions for commutative finitely generated graded $\kk$-algebras. 
\end{proof}

\begin{theorem}\label{thm:dim-Bernstein-filtration}
   Let $R$ be as in \Cref{Notation_graded}, and $\gbf{\bullet}{R}$ a generalized Bernstein filtration on $D_{R|\kk}$.
   If $\ds(R)>0$ and $(D_{R|\kk},\gbf{\bullet}{R})$ is linearly simple, then $R$ is a Bernstein algebra.
   In particular, $R$, its localizations $R_f$ for $f\in R$, and its local cohomology modules $H^j_I(R)$ for $I\subseteq R$ are all holonomic $D_{R|\kk}$-modules.
  Likewise, if $\kk$ has characteristic zero, then $R_f(s) \fs$ is a holonomic $D_{R(s)|\kk(s)}$-module for any nonzero element $f\in R$.
\end{theorem}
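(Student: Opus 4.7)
My plan is to verify the three conditions defining a Bernstein algebra in \Cref{Bernstein_algebra}. Condition~(i), linear simplicity, is part of the hypothesis, so only (ii) $\Dim(\gbf{\bullet}{R}) = 2d$ with $d = \dim R$, and (iii) $0 < \e(\gbf{\bullet}{R}) < \infty$ need work. For the upper bounds, I would apply Bernstein's inequality (\Cref{thm:Bern}) to $R$ itself as a $D_{R|\kk}$-module, equipped with the filtration of \Cref{LemmaFiltR}: that filtration has dimension $d$ and positive finite multiplicity $\e(R)$, so \Cref{thm:Bern} yields $\Dim(\gbf{\bullet}{R}) \le 2d$, and in the equality case also $\e(\gbf{\bullet}{R}) \le \e(R)^2 (C+1)^d (C+2)^d < \infty$.

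The crux is the reverse inequality $\Dim(\gbf{\bullet}{R}) \ge 2d$ together with the strict positivity $\e(\gbf{\bullet}{R}) > 0$; this is where the hypothesis $\ds(R) > 0$ enters. I would refine \Cref{LemmaPairing} to the graded setting: since every homogeneous operator of positive degree automatically lies in $\cJ_{R|\kk}$, the bilinear pairing restricts, for each $k \ge 0$, to a perfect duality
\[
\bigl[D^{i-1}_{R|\kk}/(\cJ_{R|\kk} \cap D^{i-1}_{R|\kk})\bigr]_{-k} \times [R/\m\dif{i}]_k \longrightarrow \kk.
\]
Combining this with the left $R$-module structure on $D_{R|\kk}$, the multiplication map $R_e \otimes_\kk [D^{i-1}_{R|\kk}/\cJ_{R|\kk}]_{-k} \to [D^{i-1}_{R|\kk}]_{e-k}$, $r \otimes \bar\delta \mapsto r\delta$, should be injective: evaluating $\sum c_{ij} r_j \delta_i = 0$ on $s \in R_k$ places $\delta_i(s) \in R_0 = \kk$, and the linear independence of $\{r_j\} \subseteq R_e$ together with nondegeneracy of the pairing forces all $c_{ij} = 0$. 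For each triple $(i, e, k)$ with $e, k \ge 0$ satisfying the Bernstein constraint $(e - k) + a(i - 1) \le N$, this produces $\dim_\kk R_e \cdot \dim_\kk [R/\m\dif{i}]_k$ linearly independent operators of degree $e - k$ inside $\gbf{N}{R}$. Since operators of distinct degrees in $D_{R|\kk}$ are automatically $\kk$-linearly independent, summing over a family of such triples parametrized by $f = e - k$ would yield
\[
\dim_\kk \gbf{N}{R} \;\ge\; \sum_{f} \dim_\kk R_{e(f)} \cdot \dim_\kk [R/\m\dif{i(f)}]_{k(f)}.
\]
Selecting $i$ along a sequence where $\dim_\kk R/\m\dif{i} \ge (\ds(R) - \varepsilon)\, i^d / d!$, and using the Hilbert-function growth $\sum_{e \le N} \dim_\kk R_e \sim \e(R)\, N^d / d!$, a Beta-function-type estimate should deliver $\dim_\kk \gbf{N}{R} \ge c N^{2d}$ along infinitely many $N$, with $c > 0$ proportional to $\ds(R)\,\e(R)$. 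The main obstacle is precisely this last bookkeeping step: the pairing alone yields only $\Dim \ge d$, and the passage to $2d$ depends on harvesting one factor of $N^d$ from the pairing and a second from the $R$-module action, while keeping the chosen operators in different degrees so that independence comes for free.

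Once $R$ is verified to be a Bernstein algebra, the remaining claims follow formally. \Cref{LemmaFiltR} exhibits $R$ as a holonomic $D_{R|\kk}$-module, since $\Dim(R, \gbf{\bullet}{R}) = d = \tfrac12 \Dim(\gbf{\bullet}{R})$ and $\e(R) < \infty$. By \Cref{prop:compare-gbf-order}, $\gbf{\bullet}{R}$ is linearly dominated by the order filtration, so \Cref{ThmLocCohHol} produces holonomicity of each localization $R_f$ and each local cohomology module $H^j_I(R)$. Finally, \Cref{thm: R_f(s) f^s is holonomic} delivers the holonomicity of $R_f(s)\fs$ as a $D_{R(s)|\kk(s)}$-module in characteristic zero.
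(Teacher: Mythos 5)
Your reduction to checking conditions (ii) and (iii) of \Cref{Bernstein_algebra}, the upper bounds obtained from \Cref{thm:Bern} applied to the filtration of \Cref{LemmaFiltR}, and the final harvest of holonomicity from \Cref{ThmLocCohHol,prop:compare-gbf-order,LemmaFiltR,thm: R_f(s) f^s is holonomic} all match the paper. The gap sits exactly where you flag ``the main obstacle,'' and it is not mere bookkeeping: the scheme of choosing, for each total degree $f=e-k$, a single triple $(i,e,k)$ and relying on ``operators of distinct degrees are independent'' cannot produce growth of order $N^{2d}$. For a fixed $f$ you harvest only one product $\dim_\kk R_e\cdot\dim_\kk[R/\m\dif{i}]_k$, and since $\dim_\kk R_e=O(e^{d-1})$ and $\dim_\kk[R/\m\dif{i}]_k\le\dim_\kk R_k=O(k^{d-1})$, each such term is $O(N^{2d-2})$; summing over the $O(N)$ admissible degrees $f$ gives at most $O(N^{2d-1})$. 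This is precisely the convolution (``Beta-function'') estimate you allude to, and it is one power of $N$ short: already for $R=\kk[x_1,\dots,x_d]$ standard graded your count tops out at order $N^{2d-1}$, while $\dim_\kk\gbf{N}{R}$ grows like $N^{2d}$. As proposed, the argument would only yield $\Dim(\gbf{\bullet}{R})\ge 2d-1$ and gives no positive lower bound on $\e(\gbf{\bullet}{R})$.

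The paper's fix is to extract from the nondegenerate pairing of \Cref{LemmaPairing} not graded dimension counts but a dual family: homogeneous $\delta_1,\dots,\delta_{\alpha_i}\in D^{i-1}_{R|\kk}$ and $f_1,\dots,f_{\alpha_i}\in R$, with $\alpha_i=\dim_\kk R/\m\dif{i}$, such that $\delta_j(f_k)=0$ for $j\ne k$ and $\delta_j(f_j)=1$. Applying a relation $\sum_j r_j\delta_j=0$ to $f_k$ gives $r_k=0$, so the $\delta_j$ are linearly independent \emph{over $R$}; this is the strengthening that your degree-by-degree duality and the injectivity of $R_e\otimes_\kk[D^{i-1}_{R|\kk}/\cJ_{R|\kk}]_{-k}\to[D^{i-1}_{R|\kk}]_{e-k}$ do not capture, because it lets you multiply by \emph{all} of $[R]_{\le i}$ at once without separating degrees. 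Consequently $\dim_\kk\big([R]_{\le i}\{\delta_1,\dots,\delta_{\alpha_i}\}\big)=\alpha_i\cdot\dim_\kk[R]_{\le i}$, and since $\deg(\delta_j)\le 0$ and $\ord(\delta_j)\le i-1$, this space lies in $\gbf{(a+1)i}{R}$, where $a$ is the slope. Taking $\limsup$ of $\dim_\kk\gbf{(a+1)i}{R}/((a+1)i)^{2d}$ then yields in one stroke both $\Dim(\gbf{\bullet}{R})\ge 2d$ and $\e(\gbf{\bullet}{R})\ge\ds(R)\,\e(R)/\big((a+1)^{2d}(d!)^2\big)>0$. With this replacement for your lower-bound step, the rest of your outline goes through as written.
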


\begin{proof}
   For each positive integer $i$, set $\alpha_i=\dim_\kk R/\m\dif{i}$.
   By \Cref{LemmaPairing}, there exist $\delta_1,\ldots, \delta_{\alpha_i}\in D^{i-1}_{R|\kk}$ and $f_1,\ldots, f_{\alpha_i}\in R$ homogeneous such that $\delta_j(f_k)=0$ if $j\neq k$, and $\delta_j(f_j)=1$.
   In particular, this implies that $\delta_1,\ldots, \delta_{\alpha_i}$ are linearly independent over $R$.
   Let $a$ be the slope of $\gbf{\bullet}{R}$, and $d=\dim(R)$.
   Noting that $\deg(\delta_j)\leq 0$ for each $j$, we see that $[R]_{\leq i} \{  \delta_1,\ldots, \delta_{\alpha_i} \}\subseteq \gbf{(a+1)i}{R}$, and thus
   \[
      \limsup\limits_{i\to \infty}\frac{\dim_\kk \gbf{i}{R}}{i^{2d}} \geq
      \limsup\limits_{i\to \infty} \frac{\dim_\kk \gbf{(a+1)i}{R}}{((a+1)i)^{2d}} \geq
      \limsup\limits_{i\to \infty}\frac{\dim_\kk ( [R]_{\leq i} \{  \delta_1,\ldots, \delta_{\alpha_i}\} )}{((a+1)i)^{2d}}.
   \]
   The linear independence of the $\delta_j$ over $R$ implies that $\dim_\kk ( [R]_{\leq i} \{  \delta_1,\ldots, \delta_{\alpha_i}\} ) = \alpha_i \dim_\kk [R]_{\leq i}$, so
   \begin{align*}
     \limsup\limits_{i\to \infty}\frac{\dim_\kk \gbf{i}{R}}{i^{2d}}
     &\ge \limsup\limits_{i\to \infty}\frac{\alpha_i \dim_\kk [R]_{\leq i} }{((a+1)i)^{2d}}\\
     & = \frac{1}{(a+1)^{2d}}\cdot \limsup\limits_{i\to \infty}\frac{\alpha_i }{i^{d}} \cdot \lim\limits_{i\to \infty}\frac{ \dim_\kk [R]_{\leq i} }{i^{d}}\\
     &=\frac{1}{(a+1)^{2d}(d!)^2} \cdot\ds(R)\e(R).
   \end{align*}
   Since the last quantity is positive, this shows that $\Dim(\gbf{\bullet}{R})\geq 2d$.

   To prove the reverse inequality, let $\filtration{G}{\bullet}{}$ be as in \Cref{LemmaFiltR}.
   Then $\filtration{G}{\bullet}{}$ is a filtration on the $D_{R|\kk}$-module $R$ compatible with $\gbf{\bullet}{R}$, of dimension $d=\dim(R)$, so Bernstein's inequality (\Cref{thm:Bern}) tells us that $\Dim(\gbf{\bullet}{R})\leq 2d$.

   Having established that $\Dim(\gbf{\bullet}{R}) = 2d$, the calculation displayed above also shows that $\e(\gbf{\bullet}{R})$ is positive, whereas \Cref{thm:Bern} tells us that if $(D_{R|\kk},\gbf{\bullet}{R})$ is $C$-linearly simple, then
   \begin{align*}
\e (\gbf{\bullet}{R}) & \leq (C+1)^{d}(C+2)^{d}  \e (\filtration{G}{\bullet}{})^2\\
&= (C+1)^{d}(C+2)^{d}  \e (R)^2 <\infty.
   \end{align*}
   The claims about holonomicity follow from \Cref{ThmLocCohHol,prop:compare-gbf-order,LemmaFiltR,thm: R_f(s) f^s is holonomic}.
\end{proof}

In the final two sections of this paper, we shall use \Cref{thm:dim-Bernstein-filtration} to introduce two interesting classes of Bernstein algebras.

%%%%%%%%%%%%%%%%%%%%%%%%%%%%%%%%%%%%%%%%%%
\section{Rings of invariants of finite groups in characteristic zero}  \label{Sec5}
%%%%%%%%%%%%%%%%%%%%%%%%%%%%%%%%%%%%%%%%%%

Let $R$ be a polynomial ring over a field $\kk$ of characteristic zero, and $G$ a finite group acting linearly on $R$. 
The goal of this section is to prove that the ring of invariants $R^G$ is a Bernstein algebra.
In particular, Bernstein's inequality is satisfied, and the dimension of $D_{R^G|\kk}$ is twice the dimension of $R^G$.
We also relate holonomic modules over $R$ and over $R^G$.

%%%%%%%%%%%%%%%%%%%%%%%%%%%%%%%%%%%%%%%%%%
\subsection{Bernstein inequality for rings of invariants}  \label{Sec5_1}
%%%%%%%%%%%%%%%%%%%%%%%%%%%%

If $R$ and $G$ are as in the preceding paragraph, the action of $G$ on $R$ induces a degree and order-preserving action on $D_{R|\kk}$, defined as follows: for each $g\in G$ and $\delta\in D_{R|\kk}$, we define $g\cdot \delta \in D_{R|\kk}$ by
\[
   (g\cdot\delta)(r) \coloneqq g\cdot \delta(g^{-1}\cdot r).
\]
It is easy to verify that if $\delta \in (D_{R|\kk})^G$, then $\delta$ maps $R^G$ into itself, so we have a well-defined map
\begin{equation}
   \label{eq: restriction}
   (D_{R|\kk})^G \to D_{R^G|\kk}
\end{equation}
given by restriction.
Note that as $G$ is finite, this map is injective \cite[Theorem~6.3(1)]{Schwarz} (see also \cite[Theorem~2]{Traves} for an elementary proof).

For our applications, we need the restriction map \eqref{eq: restriction} to be not merely injective, but surjective as well, to allow us to relate generalized Bernstein filtrations on $D_{R|\kk}$ and on $D_{R^G|\kk}$.
This is made possible by the following result.

\begin{lemma}\label{lem:nopseudo}
   Let $G$ be a finite group that acts linearly on a polynomial ring $R$ over a field $\kk$.
   Assume that $|G|$ is nonzero in $\kk$.
   Then there exists a normal subgroup $H\trianglelefteq G$ such that $R^H$ is a polynomial ring \textup(that may not be standard graded\textup), and such that $ \big(D^i_{R^H|\kk}\big)^{G/H}\cong D^i_{R^G|\kk}$ under the natural restriction map for every $i$.
\end{lemma}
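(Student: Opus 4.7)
The plan is to take $H$ to be the subgroup of $G$ generated by all pseudo-reflections, i.e., elements $g\in G$ whose fixed subspace in $V \coloneqq [R]_1$ (or more generally in the degree-one piece $\m/\m^2$-dual on which $G$ acts linearly) has codimension one. We then check the four desired properties: $H\trianglelefteq G$, $R^H$ is a polynomial ring, $G/H$ acts on $R^H$ without pseudo-reflections, and finally $(D^i_{R^H|\kk})^{G/H}\cong D^i_{R^G|\kk}$.

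Normality of $H$ is immediate: if $s\in G$ is a pseudo-reflection with fixed hyperplane $W\subseteq V$, then $gsg^{-1}$ is a pseudo-reflection with fixed hyperplane $g(W)$. So conjugation by $G$ permutes pseudo-reflections, hence preserves $H$. Since $|G|$ is invertible in $\kk$ and $H$ is by construction generated by pseudo-reflections, the Chevalley--Shephard--Todd theorem gives that $R^H$ is a polynomial ring on $\dim V$ homogeneous generators; these generators may have distinct degrees equal to the fundamental degrees of $H$, whence the non-standard grading allowed in the statement.

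Next, $G/H$ acts on $R^H$ without pseudo-reflections: this is a standard consequence of the construction (see, e.g., Bourbaki, \emph{Groupes et alg\`ebres de Lie}, Ch.~V), because any element of $G$ which acts as a pseudo-reflection on $R^H\cong \Sym V^*/\text{(stuff)}$ can be lifted to a pseudo-reflection in $G$, forcing it into $H$ and so acting trivially on $R^H$. Concretely, if $\bar g\in G/H$ fixes a hyperplane in $\Spec R^H$, then pulling back along the quotient map $\Spec R \to \Spec R^H = \Spec R/H$---which, by Chevalley--Shephard--Todd, is flat of degree $|H|$ and unramified outside the union of reflection hyperplanes of $H$---shows that a lift of $\bar g$ to $G$ would act as a pseudo-reflection on $V$, contradicting the maximality of $H$.

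Finally, we invoke the classical theorem (due to Kantor, with later treatments by Levasseur--Stafford, Schwarz, and others): if a finite group $\Gamma$ acts on a smooth finitely generated graded $\kk$-algebra $S$ with $|\Gamma|$ invertible in $\kk$ and contains no pseudo-reflections on $S$, then the natural restriction map $(D^i_{S|\kk})^\Gamma \to D^i_{S^\Gamma|\kk}$ is an isomorphism for every $i\in\NN$. Applying this with $S=R^H$ (smooth since it is polynomial) and $\Gamma=G/H$, and using $(R^H)^{G/H}=R^G$, yields the desired order-preserving isomorphism. The main obstacle is the third step, verifying that $G/H$ genuinely contains no pseudo-reflections acting on $R^H$; the cleanest route is the geometric one above via the ramification of $\Spec R\to\Spec R^H$, but a direct representation-theoretic argument using a $G$-stable complement of the kernel of $V\to [R^H]_{\text{low}}/\m_{R^H}^2$ also works once one identifies fixed subspaces carefully.
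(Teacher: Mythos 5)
Your choice of $H$ (the subgroup generated by the pseudo-reflections), the normality argument, and the appeal to Shephard--Todd in the non-modular case all match the paper, and your observation that elements of $G\smallsetminus H$ only fix loci of codimension at least two is exactly the geometric input the paper uses. The gap is in the final step: you discharge the entire differential-operator content onto a ``classical theorem'' asserting that whenever a finite group $\Gamma$ with $|\Gamma|$ invertible acts without pseudo-reflections on a smooth finitely generated graded $\kk$-algebra $S$, the restriction map $(D^i_{S|\kk})^\Gamma \to D^i_{S^\Gamma|\kk}$ is an isomorphism for every $i$. No such statement is available off the shelf in the generality you need. Kantor's theorem \cite{Kantor} (and the later treatments by Levasseur--Stafford and Schwarz) concern \emph{linear} actions in characteristic zero, whereas here (a) the action of $G/H$ on $R^H$ is a priori only degree-preserving, and ``pseudo-reflection'' is not even defined for such an action --- the meaningful hypothesis is that fixed loci have codimension at least two, i.e.\ that the quotient map is \'etale in codimension one; (b) the lemma assumes only that $|G|$ is invertible in $\kk$, so it includes non-modular positive characteristic, which none of your references cover; and (c) you need the isomorphism at each order level $i$, not merely an isomorphism of the full rings of operators. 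In characteristic zero one can repair (a) by averaging to produce a $G/H$-stable graded space of algebra generators of $R^H$, so the action linearizes and Kantor applies, but (b) and (c) still leave the statement you quote unproved; as it stands, the step carrying all the weight is asserted rather than established.

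For comparison, the paper does not pass through a Kantor-type theorem for the $G/H$-action at all: it shows directly that $R^G \hookrightarrow R^H$ is \'etale in codimension one (using precisely your codimension-two observation for elements of $G\smallsetminus H$, together with Kemper's result identifying completions at points whose stabilizer lies in $H$), and then invokes a levelwise descent result for differential operators along extensions that are \'etale in codimension one \cite[proof of Proposition~6.4]{BJNB}, giving $D^i_{R^H|\kk} \cong D^i_{R^G|\kk}(R^G,R^H)$ for every $i$; taking $G/H$-invariants finishes the proof. Your ramification argument for ``no pseudo-reflections in $G/H$'' is this same codimension count in disguise, so the fix is to convert it into the statement that $R^G \subseteq R^H$ is \'etale in codimension one and then prove (or properly cite) the order-preserving descent of differential operators along such extensions, rather than appealing to a black-box invariant-theoretic theorem.
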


\begin{proof}
   We identify $R$ with the ring of polynomial functions on a $\kk$-vector space $V$.
   Let $H\leq G$ be the subgroup generated by all elements $g$ in $G$ such that the rank of $\id- g$ on $V$ is one; such elements are called pseudoreflections in the literature.
   The subgroup $H$ is normal, and $R^H$ is a polynomial ring by the Shephard--Todd Theorem \cite{ShephardTodd} (see also \cite[Theorem~7.1.4]{neusel} for a modern proof, in arbitrary characteristic).

   The inclusion map ${R^G \to R^H}$ is \'etale in codimension one.
   This is well known to experts, but we include an argument for convenience of the reader.
   Since the \'etale locus is preserved by base change, we may assume that $\kk$ is algebraically closed.
   Let $g_1,\dots,g_t \in G$ be a set of representatives for $G/H \smallsetminus \{H\}$. Let $X\subseteq V$ be the union of the fixed spaces of $g_1,\dots,g_t$.
   Note that $X$ has codimension at least two in $V$.
   It suffices to show that for any maximal ideal $\m$ of $R$ that corresponds to a point $v\in V\smallsetminus X$, the inclusion map ${R^G_{\m \cap R^G} \to R^H_{\m \cap R^H}}$ is \'etale.
   To this end, we note that the stabilizer of $v$ in $G$ is contained in $H$.
   Then, by~\cite[Proposition~1.1]{Kemper}, the inclusion map induces an isomorphism on the completions ${\widehat{R^G}_{\m \cap R^G} \xrightarrow{\cong} \widehat{R^H}_{\m \cap R^H}}$, and the claim follows.

   Thus,  the restriction map
   \[{D^i_{R^H|\kk} \to D^i_{R^G|\kk}\big(R^G,R^H\big)},\]
   where $D^i_{R^G|\kk}\big(R^G,R^H\big)$ denotes the differential operators from $R^G$ to $R^H$ as {$R^G$-modules}, is an isomorphism for all $i$ \cite[Proof of Proposition~6.4]{BJNB}.
   Restricting to $G/H$ invariants on both sides yields the desired isomorphism.
\end{proof}

The case of the previous lemma where $G$ contains no pseudoreflections is a theorem of Kantor \cite[Chapitre III, Th\'eor\`eme 4]{Kantor}.

With the notation of \Cref{lem:nopseudo}, we have $R^G = \big(R^H\big)^{G/H}$, where $R^H$ is a polynomial ring and the action of $G/H$ on $R^H$, although not necessarily linear, fixes $\kk$ and is degree preserving.
\Cref{lem:nopseudo}  allows us to work in the following setting.

\begin{setup}\label{setup:finite-char0}
   Let $R=\kk[x_1,\dots,x_n]$ be a polynomial ring over a field $\kk$ of characteristic zero, with an arbitrary positive grading.
   Let $G$ be a finite group that acts on $R$ by degree-preserving $\kk$-linear automorphisms, and suppose that $D_{R^G|\kk}\cong (D_{R|\kk})^G$ under the natural restriction map \eqref{eq: restriction}.
   Henceforth we freely identify $D_{R^G|\kk}$ with $(D_{R|\kk})^G$.
   Let $\gbf{\bullet}{R}$ and $\gbf{\bullet}{R^G}$ be generalized Bernstein filtrations on $D_{R|\kk}$ and $D_{R^G|\kk}$ with the \emph{same integral slope}.
   Then under the aforementioned identification we have
   \begin{equation}\label{eq: relating filtrations}
      \gbf{i}{R^G}=(\gbf{i}{R})^G = \gbf{i}{R}\cap D_{R^G|\kk}\ \text{ for each $i$.}
   \end{equation}
\end{setup}

\begin{lemma}\label{lem:invariants-fg-graded}
   In the context of \Cref{setup:finite-char0}, there exists a natural injective ring homomorphism $\gr(\gbf{\bullet}{R^G}) \to \gr(\gbf{\bullet}{R})$.
   This is a module-finite map of commutative rings, and $\gr(\gbf{\bullet}{R^G})$ is a finitely generated $\kk$-algebra of dimension $2 \dim(R^G)$.
\end{lemma}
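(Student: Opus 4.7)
The plan is to identify the associated graded ring of the invariant filtration with the ring of $G$-invariants of the associated graded ring of $R$, and then invoke the classical theorem that $|G|$-free invariants of a finitely generated commutative algebra form a finitely generated, module-finite subalgebra.

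First I would construct the map $\gr(\gbf{\bullet}{R^G}) \to \gr(\gbf{\bullet}{R})$ in the obvious way: for each $i$, the inclusion $\gbf{i}{R^G} = \gbf{i}{R} \cap D_{R^G|\kk} \hookrightarrow \gbf{i}{R}$ from \eqref{eq: relating filtrations} sends $\gbf{i-1}{R^G}$ into $\gbf{i-1}{R}$, and the induced map on quotients has kernel $\gbf{i}{R^G}\cap \gbf{i-1}{R} = \gbf{i-1}{R^G}$, hence is injective. These maps assemble to an injective graded ring homomorphism.

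Next I would identify the image with the invariants of $\gr(\gbf{\bullet}{R})$. The $G$-action on $R$ preserves both degree and order, hence preserves $\gbf{\bullet}{R}$, so $G$ acts on $\gr(\gbf{\bullet}{R})$. Since $\operatorname{char}(\kk)=0$ and $G$ is finite, the Reynolds operator $\tfrac{1}{|G|}\sum_{g\in G}g$ makes the functor $(-)^G$ exact on $\kk[G]$-modules, so the short exact sequences $0 \to \gbf{i-1}{R} \to \gbf{i}{R} \to \gbf{i}{R}/\gbf{i-1}{R}\to 0$ remain exact after taking invariants. Combining this with $\gbf{i}{R^G}=(\gbf{i}{R})^G$ yields $\gbf{i}{R^G}/\gbf{i-1}{R^G} = (\gbf{i}{R}/\gbf{i-1}{R})^G$ for each $i$, so our map identifies $\gr(\gbf{\bullet}{R^G})$ with $\gr(\gbf{\bullet}{R})^G$.

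From this identification the remaining claims are easy. By \Cref{prop: associated graded ring is polynomial ring}, $\gr(\gbf{\bullet}{R})$ is a polynomial ring in $2n$ variables, in particular a commutative finitely generated $\kk$-algebra, so $\gr(\gbf{\bullet}{R^G}) = \gr(\gbf{\bullet}{R})^G$ is commutative. Because $G$ is finite and acts by graded $\kk$-algebra automorphisms on the finitely generated commutative $\kk$-algebra $\gr(\gbf{\bullet}{R})$, the classical theorem on invariants of finite group actions ensures that $\gr(\gbf{\bullet}{R})^G$ is a finitely generated $\kk$-algebra and the extension $\gr(\gbf{\bullet}{R})^G \subseteq \gr(\gbf{\bullet}{R})$ is module-finite. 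Finally, since this extension is module-finite and $\gr(\gbf{\bullet}{R})$ has Krull dimension $2n = 2\dim(R) = 2\dim(R^G)$ (the last equality because $R^G\subseteq R$ is itself module-finite), we conclude that $\dim \gr(\gbf{\bullet}{R^G}) = 2\dim(R^G)$.

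I do not anticipate a genuine obstacle here; the only subtle point is being careful enough to see that $\gbf{\bullet}{R^G}$ really is the subspace filtration from $\gbf{\bullet}{R}$, which is precisely what \eqref{eq: relating filtrations} provides once the slopes are chosen to match.
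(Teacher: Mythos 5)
Your proposal is correct and follows essentially the same route as the paper's proof: use \eqref{eq: relating filtrations} to see that $\gbf{\bullet}{R^G}$ is the induced (subspace) filtration, identify $\gr(\gbf{\bullet}{R^G})$ with $\gr(\gbf{\bullet}{R})^G$, invoke \Cref{prop: associated graded ring is polynomial ring}, and conclude by classical invariant theory of finite group actions on finitely generated commutative $\kk$-algebras. You merely spell out the kernel computation and the exactness of $(-)^G$ in characteristic zero, which the paper leaves implicit.
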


\begin{proof}
   We are identifying $D_{R^G|\kk}$ with $(D_{R|\kk})^G \subseteq D_{R|\kk}$; under this identification, \eqref{eq: relating filtrations} is telling us that $\gbf{\bullet}{R^G}$ is the filtration on $D_{R^G|\kk}$ induced by $\gbf{\bullet}{R}$.
   Thus, we have a natural injective map $\gr(\gbf{\bullet}{R^G}) \to \gr(\gbf{\bullet}{R})$, which gives an isomorphism $\gr(\gbf{\bullet}{R^G})\cong \gr(\gbf{\bullet}{R})^G$.
   \Cref{prop: associated graded ring is polynomial ring} shows that $\gr(\gbf{\bullet}{R})$ is a polynomial ring of dimension $2\dim(R) = 2\dim(R^G)$, so the remaining claims follow from basic invariant theory of finitely generated commutative $\kk$-algebras.
\end{proof}

\begin{lemma}\label{lem:right-mod-gens-finite}
   Under \Cref{setup:finite-char0}, $D_{R|\kk}$ is a finitely generated right $D_{R^G|\kk}$-module.
   Moreover, there exist finitely many elements $\gamma_1,\dots,\gamma_\ell\in D_{R|\kk}$ and a positive integer~$v$ such that $\gbf{i}{R} \subseteq \gamma_1 \cdot \gbf{i+v}{R^G} + \cdots + \gamma_\ell \cdot \gbf{i+v}{R^G}$ for every $i \in \NN$.
\end{lemma}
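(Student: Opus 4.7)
The plan is to apply \Cref{prop:good-filtration First Noeth case} to the filtered algebra $(D_{R^G|\kk},\gbf{\bullet}{R^G})$ and the right module $D_{R|\kk}$, equipped with the filtration $\gbf{\bullet}{R}$. First I would observe that this filtration is compatible with the right $D_{R^G|\kk}$-action: since $\gbf{\bullet}{R}$ is a $\kk$-algebra filtration in the sense of \Cref{notation:filtration} and $\gbf{j}{R^G} = \gbf{j}{R}\cap D_{R^G|\kk}\subseteq \gbf{j}{R}$ by \eqref{eq: relating filtrations}, one has $\gbf{i}{R}\cdot\gbf{j}{R^G}\subseteq \gbf{i+j}{R}$ for all $i,j\in\NN$.

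Next, \Cref{lem:invariants-fg-graded} supplies the hypotheses needed to invoke \Cref{prop:good-filtration First Noeth case}: the associated graded ring $\gr(\gbf{\bullet}{R^G})$ is a finitely generated commutative $\kk$-algebra, and $\gr(\gbf{\bullet}{R})$ is module-finite over it, hence finitely generated as a $\gr(\gbf{\bullet}{R^G})$-module. (Commutativity of $\gr(\gbf{\bullet}{R})$ means its left and right module structures over $\gr(\gbf{\bullet}{R^G})$ coincide, so there is no ambiguity in applying the proposition in the right-module setting.) Consequently, \Cref{prop:good-filtration First Noeth case} yields that $\gbf{\bullet}{R}$ is a good filtration on the right $D_{R^G|\kk}$-module $D_{R|\kk}$, and that any lift $\gamma_1,\dots,\gamma_\ell \in D_{R|\kk}$ (with $\gamma_j\in \gbf{v_j}{R}$) of a finite set of homogeneous generators of $\gr(\gbf{\bullet}{R})$ over $\gr(\gbf{\bullet}{R^G})$ generates $D_{R|\kk}$ as a right $D_{R^G|\kk}$-module. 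This establishes the first assertion.

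For the quantitative second assertion, I would extract from the proof of \Cref{prop:good-filtration First Noeth case} the precise equality
\[\gbf{i}{R} = \gamma_1\cdot\gbf{i-v_1}{R^G} + \cdots + \gamma_\ell\cdot \gbf{i-v_\ell}{R^G}\]
for every $i\in \NN$, adopting the convention that $\gbf{j}{R^G}=0$ for $j<0$. Then choosing any positive integer $v$ (e.g.\ $v=1$) yields $\gbf{i-v_j}{R^G} \subseteq \gbf{i+v}{R^G}$ for every $j$ and every $i$, so
\[\gbf{i}{R} \subseteq \gamma_1\cdot \gbf{i+v}{R^G} + \cdots + \gamma_\ell \cdot \gbf{i+v}{R^G}\]
as required. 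The entire argument is largely a matter of assembling previously established structural results; the only point that requires any care is confirming that the right-module version of \Cref{prop:good-filtration First Noeth case} applies, which is immediate from commutativity of the associated graded ring, and verifying the filtration compatibility above, which follows at once from \eqref{eq: relating filtrations}.
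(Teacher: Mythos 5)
Your proof is correct and follows essentially the same route as the paper: both invoke \Cref{lem:invariants-fg-graded} together with \Cref{prop:good-filtration First Noeth case} to see that $\gbf{\bullet}{R}$ is a good filtration on $D_{R|\kk}$ as a right $(D_{R^G|\kk},\gbf{\bullet}{R^G})$-module, which gives finite generation and shift domination by a standard filtration $\{\gamma_1,\dots,\gamma_\ell\}\gbf{\bullet}{R^G}$, yielding the stated containment. Your extra checks (compatibility of $\gbf{\bullet}{R}$ with the right action via \eqref{eq: relating filtrations}, and extracting the explicit shift from the proof of \Cref{prop:good-filtration First Noeth case}) are fine details that the paper leaves implicit.
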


\begin{proof}
   By \Cref{lem:invariants-fg-graded} and \Cref{prop:good-filtration First Noeth case}, $\gbf{\bullet}{R}$ is a good filtration on $D_{R|\kk}$ as a right $(D_{R^G|\kk},\gbf{\bullet}{R^G})$-module.
   Thus, $D_{R|\kk}$ is finitely generated as a right module over $D_{R^G|\kk}$, and $\gbf{\bullet}{R}$ is shift equivalent to a standard filtration $\{\gamma_1,\ldots,\gamma_\ell\}\gbf{\bullet}{R^G}$.
\end{proof}

Our main technical result in this section is the following.

\begin{theorem}\label{ThmRGLinearSimple}
   Let $G$ be a finite group that acts linearly on a polynomial ring $R$ over a field $\kk$ of characteristic zero.
   If $\gbf{\bullet}{R^G}$ is a generalized Bernstein filtration on $D_{R^G|\kk}$, then $(D_{R^G|\kk},\gbf{\bullet}{R^G})$ is linearly simple.	
\end{theorem}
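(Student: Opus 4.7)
The strategy is to transfer the linear simplicity of $(D_{R|\kk},\gbf{\bullet}{R})$, established in \Cref{prop:bavula-poly}, to $(D_{R^G|\kk},\gbf{\bullet}{R^G})$ by means of the Reynolds operator $\rho \coloneqq \tfrac{1}{|G|}\sum_{g\in G} g : D_{R|\kk}\to D_{R^G|\kk}$ combined with the bimodule finite generation of $D_{R|\kk}$ over $D_{R^G|\kk}$ supplied by \Cref{lem:right-mod-gens-finite}.

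First, \Cref{lem:nopseudo} reduces us to \Cref{setup:finite-char0}. Fix $C'$ so that $(D_{R|\kk},\gbf{\bullet}{R})$ is $C'$-linearly simple, by \Cref{prop:bavula-poly}. \Cref{lem:right-mod-gens-finite} provides right-module generators $\gamma_1,\ldots,\gamma_\ell\in\gbf{v}{R}$; by the same argument---using the commutativity of $\gr(\gbf{\bullet}{R})$ from \Cref{lem:invariants-fg-graded}---we also obtain left-module generators $\eta_1,\ldots,\eta_s\in\gbf{v}{R}$ with $\gbf{i}{R}\subseteq\sum_l \gbf{i+v}{R^G}\eta_l$ for every $i$. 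Enlarging both sets to their $G$-orbit unions, we may assume $\{\eta_l\}$ and $\{\gamma_k\}$ are $G$-stable; in particular, $G$ permutes each set, so $g\eta_l=\eta_{\sigma_g(l)}$ and $g\gamma_k=\gamma_{\tau_g(k)}$ for permutations $\sigma_g,\tau_g$.

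Given $\delta\in\gbf{i}{R^G}\setminus\{0\}$, linear simplicity of $D_{R|\kk}$ yields $1=\sum_n\alpha_n\delta\beta_n$ with $\alpha_n,\beta_n\in\gbf{C'i}{R}$. Decomposing $\alpha_n=\sum_l a_{n,l}\eta_l$ and $\beta_n=\sum_k\gamma_k b_{n,k}$ with $a_{n,l},b_{n,k}\in\gbf{C'i+v}{R^G}$, and applying $\rho$ to both sides---using that $1,a_{n,l},b_{n,k}$, and $\delta$ are all $G$-invariant---produces
\[
1=\sum_{n,k,l} a_{n,l}\cdot\rho(\eta_l\delta\gamma_k)\cdot b_{n,k},
\]
where, by $G$-stability of the generators, $\rho(\eta_l\delta\gamma_k)=\tfrac{1}{|G|}\sum_g \eta_{\sigma_g(l)}\delta\gamma_{\tau_g(k)}\in\gbf{i+2v}{R^G}$.

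The remaining---and principal---obstacle is to show that each $\rho(\eta_l\delta\gamma_k)$ lies in $\gbf{Ki}{R^G}\cdot\delta\cdot\gbf{Ki}{R^G}$ for a uniform constant $K$; substituting such an expression into the display above then yields $1\in\gbf{Ci}{R^G}\cdot\delta\cdot\gbf{Ci}{R^G}$ for some $C$ depending on $C'$, $v$, $K$, and $|G|$. The difficulty is that $\rho$ is not multiplicative, so one cannot commute it through $\delta$. The plan is to exploit the $D_{R^G|\kk}$-bimodule splitting $D_{R|\kk}=D_{R^G|\kk}\oplus\ker\rho$: writing each $\eta_p\delta\gamma_q$ in terms of this splitting, the $G$-averaging forces pairwise cancellation of the non-invariant contributions, leaving an expression with $\delta$ flanked by $D_{R^G|\kk}$-factors. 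A bookkeeping argument using the finite-dimensionality of each $\gbf{j}{R^G}$ and the boundedness (independent of $i$) of the finitely many $\eta_p,\gamma_q,\sigma_g,\tau_g$ involved then delivers the desired linear filtration bound.
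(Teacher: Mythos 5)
Your setup (reduction via \Cref{lem:nopseudo}, linear simplicity of $D_{R|\kk}$ from \Cref{prop:bavula-poly}, the right-module generators from \Cref{lem:right-mod-gens-finite}, and the identity $1=\sum_{n,k,l}a_{n,l}\,\rho(\eta_l\delta\gamma_k)\,b_{n,k}$ obtained from $D_{R^G|\kk}$-bilinearity of the Reynolds operator) is fine as far as it goes. But the step you yourself flag as the ``principal obstacle''---showing $\rho(\eta_l\delta\gamma_k)\in\gbf{Ki}{R^G}\cdot\delta\cdot\gbf{Ki}{R^G}$ for a uniform $K$---is precisely the content of the theorem, and your sketch does not establish it. Since the $G$-action is multiplicative on operators and $\delta$ is invariant, $\rho(\eta_l\delta\gamma_k)=\tfrac{1}{|G|}\sum_g (g\cdot\eta_l)\,\delta\,(g\cdot\gamma_k)$ is a sum in which $\delta$ is flanked by \emph{non-invariant} operators in every summand; the fact that the total sum lies in $D_{R^G|\kk}$ gives no mechanism for rewriting it with invariant flanking factors, let alone with factors of filtration degree linear in $i$. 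The proposed use of the splitting $D_{R|\kk}=D_{R^G|\kk}\oplus\ker\rho$ and ``pairwise cancellation'' is not an argument: applying the splitting to each $\eta_l\delta\gamma_k$ separates invariant from non-invariant \emph{components of the product}, not invariant factors around $\delta$, and finite-dimensionality of the $\gbf{j}{R^G}$ cannot produce a bound that is uniform in $i$ and $\delta$. If membership of an invariant element of $D_{R|\kk}\,\delta\,D_{R|\kk}$ in $D_{R^G|\kk}\,\delta\,D_{R^G|\kk}$ (with linear degree control) were automatic, the theorem would be essentially immediate from \Cref{prop:bavula-poly}; it is not.

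The paper avoids this obstruction by never trying to average an expression in which $\delta$ sits between non-invariant operators. Instead, it first replaces $\delta$ by a nonzero \emph{invariant ring element}: by \Cref{rmk: iterated commutators}, some $m$-fold commutator $f=[\cdots[[\delta,r_1],r_2],\ldots,r_m]$ with the $r_j$ among fixed algebra generators of $R^G$ is nonzero, where $m=\ord(\delta)\le Ci$, so $f\in R^G\smallsetminus\{0\}$ and $f\in\gbf{Cdi}{R^G}\cdot\delta\cdot\gbf{Cdi}{R^G}$. Linear simplicity of $D_{R|\kk}$ is then applied to $f^v$ (not to $\delta$), and \Cref{commute-f} is used to rewrite $f^v\gamma_j$ as an element of $\gbf{\alpha i+v}{R}\cdot f$; combining with the right-module generators yields $1\in\gbf{\beta i}{R}\cdot f\cdot\gbf{\beta i}{R^G}$. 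In that expression everything to the right of the (single) non-invariant factor is invariant, because $f\in R^G$, so applying the right-$D_{R^G|\kk}$-linear map $\rho$ legitimately replaces the left factor by its average and gives $1\in\gbf{\beta i}{R^G}\cdot f\cdot\gbf{\beta i}{R^G}$; substituting the expression of $f$ in terms of $\delta$ finishes the proof. Your proof would need an analogous device---some way to reduce to a configuration where the Reynolds operator only acts on one side of an already-invariant middle---and as written it does not supply one, so there is a genuine gap.
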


\begin{proof}
   \Cref{lem:nopseudo} allows us to work under \Cref{setup:finite-char0}.
   Fix a finite set $\Sigma$ of generators for $R^G$ as a $\kk$-algebra, and choose $d$ so that $\Sigma\subseteq \gbf{d}{R^G}$. 
   Choose $C\in \PP$ such that for each $i\in \NN$ we have $\gbf{i}{R ^G}\subseteq D^{C i }_{R^G|\kk}$, as in \Cref{prop:compare-gbf-order}, and $1\in \gbf{Ci}{R} \cdot \delta \cdot \gbf{Ci}{R}$ for all $\delta\in \gbf{i}{R} \smallsetminus \{0\}$, as in \Cref{prop:bavula-poly}.
   Fix $\gamma_1,\dots,\gamma_\ell\in D_{R|\kk}$ and $v \in \PP$ with $\gbf{i}{R} \subseteq \gamma_1 \cdot \gbf{i+v}{R^G} + \cdots + \gamma_\ell \cdot \gbf{i+v}{R^G}$ for all $i$, as in \Cref{lem:right-mod-gens-finite}, and note that, by possibly increasing $v$, we may assume that $v > \ord(\gamma_j)$ and $\gamma_j \in \gbf{v}{R}$ for all $j$.

   We wish to show that there exists $K\in \PP$ such that for each $i\in \NN$ and each nonzero $\delta\in \gbf{i}{R^G}$, we have $1\in \gbf{Ki}{R^G}\cdot\delta \cdot \gbf{Ki}{R^G}$.
   Note, however, that since $\gbf{0}{R^G}=\kk$, this claim holds for $i=0$ with any $K$, so we shall focus on the case of a positive $i$.

   Fix a positive integer $i$ and a nonzero element $\delta\in \gbf{i}{R^G}$, and let $m = \ord(\delta)$.
   By \Cref{rmk: iterated commutators}, there exists a \emph{nonzero} $m$-fold commutator $[\cdots [[\delta,r_1],r_2],\ldots, r_m]$ with $r_1,\ldots, r_m \in \Sigma \subseteq \gbf{d}{R^G}$.
   Let $f$ denote this iterated commutator; then $f\in R^G\smallsetminus \{0\}$ and $f\in \gbf{md}{R^G}\cdot \delta \cdot \gbf{md}{R^G}$.
   Since $m \le Ci$ by our choice of $C$, we have 
   \begin{equation}\label{eq: where f is}
      f\in\gbf{Cdi}{R^G}\cdot \delta \cdot \gbf{Cdi}{R^G}.
   \end{equation}

   For $j \in \{1,\ldots,\ell\}$, write $\gamma_j^{(0)}=\gamma_j$ and recursively define $\gamma_j^{(k+1)}=[\gamma_j^{(k)},f]$.
   Note that $\gamma_j^{(v)}=0$ by our choice of $v$, and \Cref{commute-f} tells us that we can write
   \begin{equation}\label{eq: gammas}
      f^v \gamma_j = \gamma_j f^v + c_1 \gamma^{(1)}_j f^{v-1} + \cdots + c_{v-1} \gamma^{(v-1)}_j f
   \end{equation}
   for some integers $c_1,\ldots,c_{v-1}$.
   Since \eqref{eq: where f is} shows that $f\in \gbf{3Cdi}{R}$, and $\gamma_j \in \gbf{v}{R}$ by our choice of $v$, we have $\gamma_j^{(k)} \in \gbf{3Cdki+v}{R}$ for each $k$.
   Setting $\alpha = 3Cdv$, \eqref{eq: gammas} yields
   \begin{equation}
      f^v \gamma_j \in \gbf{\alpha i + v}{R} \cdot f.
      \label{eq: replacing f^v with f}
   \end{equation}

   As $f^v$ is a nonzero element of $\gbf{\alpha i}{R}$, our choice of $C$ implies that $1 \in \gbf{C\alpha i}{R} \cdot f^v \cdot \gbf{C\alpha i}{R}$.
   Writing $\gbf{C\alpha i}{R}\subseteq \gamma_1 \cdot \gbf{C\alpha i+v}{R^G} + \cdots + \gamma_\ell \cdot \gbf{C\alpha i+v}{R^G}$ and using \eqref{eq: replacing f^v with f} we see that
   $1 \in \gbf{(C+1)\alpha i + v}{R} \cdot f \cdot \gbf{C\alpha i+v}{R^G}$, so setting $\beta = (C+1)\alpha v$ we see that
   \begin{equation}
      1\in \gbf{\beta i}{R} \cdot f \cdot \gbf{\beta i}{R^G}.\label{eq: containment}
   \end{equation}

   Applying a Reynolds operator, we shall be able to replace the $\gbf{\beta i}{R}$ in \eqref{eq: containment} with $\gbf{\beta i}{R^G}$.
   Indeed, 
   the map $\rho:D_{R|\kk} \to (D_{R|\kk})^G = D_{R^G|\kk}$ given by $\delta \mapsto \frac{1}{|G|} \sum_{g\in G} g\cdot\delta$ maps $\gbf{i}{R}$ into $(\gbf{i}{R})^G = \gbf{i}{R^G}$ by \eqref{eq: relating filtrations}.
   This is a map of right $D_{R^G|\kk}$-modules, so applying $\rho$ to an equation expressing the containment in \eqref{eq: containment} we find that $1\in \gbf{\beta i}{R^G} \cdot f \cdot \gbf{\beta i}{R^G}$.
   To wrap up the proof, we invoke \eqref{eq: where f is} to conclude that
   \[1\in \gbf{\beta i}{R^G} \cdot \gbf{Cdi}{R^G} \cdot \delta \cdot \gbf{Cd i}{R^G} \cdot \gbf{\beta i}{R^G} \subseteq \gbf{(\beta+Cd) i}{R^G}  \cdot \delta \cdot \gbf{(\beta+Cd) i}{R^G}\]
   and stress that $\beta+Cd$ depends neither on $i$ nor on $\delta$.
\end{proof}

\begin{corollary}\label{Cor_invariant_Bernstein}
   Let $G$ be a finite group that acts linearly on a polynomial ring $R$ over a field $\kk$ of characteristic zero.
   Then the ring of invariants $R^G$ is a Bernstein $\kk$-algebra.
   In particular, every $D_{R^G|\kk}$-module satisfies Bernstein inequality with respect to any generalized Bernstein filtration.
   Furthermore, $R^G$, its localizations $R^G_f$ for $f\in R^G$, and its local cohomology modules $H^j_I(R^G)$ for $I\subseteq R ^G$ are all holonomic $D_{R^G|\kk}$-modules, and thus have finite length as $D_{R^G|\kk}$-modules. Likewise, for any nonzero element $f\in R^G$, $R^G_f(s) \fs$ is a holonomic $D_{R^G(s)|\kk(s)}$-module, and thus has finite length.
\end{corollary}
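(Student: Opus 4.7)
The plan is to verify the hypotheses of \Cref{thm:dim-Bernstein-filtration} for $R^G$, from which all the stated conclusions follow immediately. To that end, I would first note that $R^G$ fits \Cref{Notation_graded}: it is a finitely generated positively graded commutative $\kk$-algebra with $[R^G]_0 = \kk$, so it is a \cga\ in the sense of \Cref{conv: commutative graded ring}, and we may fix a generalized Bernstein filtration $\gbf{\bullet}{R^G}$ on $D_{R^G|\kk}$.

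Next I would check the two hypotheses of \Cref{thm:dim-Bernstein-filtration}. The positivity of the differential signature $\ds(R^G) > 0$ is recorded in \Cref{positive-diff-sig: R}, following from \cite[Theorem~6.15]{BJNB}. Linear simplicity of $(D_{R^G|\kk},\gbf{\bullet}{R^G})$ is precisely the content of \Cref{ThmRGLinearSimple}, and by \Cref{prop: linear simplicity is independent of slope} it is independent of the slope chosen for $\gbf{\bullet}{R^G}$. Thus \Cref{thm:dim-Bernstein-filtration} applies and tells us that $R^G$ is a Bernstein $\kk$-algebra, that $R^G$, every localization $R^G_f$, and every local cohomology module $H^j_I(R^G)$ are holonomic $D_{R^G|\kk}$-modules, and that $R^G_f(s)\fs$ is a holonomic $D_{R^G(s)|\kk(s)}$-module for each nonzero $f \in R^G$.

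Finally, the assertion that every nonzero $D_{R^G|\kk}$-module satisfies Bernstein's inequality with respect to any generalized Bernstein filtration follows from the Bernstein inequality (\Cref{thm:Bern}) applied to the linearly simple filtered algebra $(D_{R^G|\kk},\gbf{\bullet}{R^G})$, together with the fact that $\Dim(\gbf{\bullet}{R^G}) = 2\dim(R^G)$ from the Bernstein algebra condition. The finite length claims for $R^G$, $R^G_f$, $H^j_I(R^G)$, and $R^G_f(s)\fs$ as $D$-modules then follow from \Cref{ThmLenHol}, since each is holonomic over a linearly simple filtered $\kk$-algebra with finite dimension and positive finite multiplicity.

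In this approach, no genuinely new work is required: the main obstacle (establishing linear simplicity of $(D_{R^G|\kk},\gbf{\bullet}{R^G})$) has already been overcome in \Cref{ThmRGLinearSimple}, and the positivity of the differential signature is imported from \cite{BJNB}. The corollary is essentially an assembly of these ingredients through \Cref{thm:dim-Bernstein-filtration}.
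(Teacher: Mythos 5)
Your proposal is correct and matches the paper's own proof, which likewise deduces the corollary by combining \Cref{thm:dim-Bernstein-filtration} and \Cref{ThmRGLinearSimple} with the positivity of the differential signature of $R^G$ from \cite[Theorem~6.15]{BJNB}. The extra details you supply (invoking \Cref{thm:Bern} and \Cref{ThmLenHol} for the Bernstein inequality and finite length) are exactly the ingredients already packaged inside \Cref{thm:dim-Bernstein-filtration} and the definition of a Bernstein algebra, so no new argument is needed.
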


\begin{proof}
   This follows from \Cref{thm:dim-Bernstein-filtration,ThmRGLinearSimple}, using the fact that the differential signature of $R^G$ is positive \cite[Theorem~6.15]{BJNB}.
\end{proof}

We end this subsection with a couple of examples to illustrate that the main result of this section does not hold in the setting of rational singularities, even for hypersurfaces.

\begin{example}\label{eg51}
   Let $\kk$ be a field of characteristic zero, and
   \[	R =\frac{\kk[ w, x, y, z]}{(w^3+x^3+y^3+z^3)}.
   \]
   This is a standard graded hypersurface domain with rational singularities for which $D_{R|\kk}$ has no elements of negative degree \cite[Theorem~1.2]{Mallory}.
   Consequently, $(D_{R|\kk},\gbf{\bullet}{R})$ is not linearly simple, and thus $R$ is not a Bernstein algebra.
   Moreover, the maximal ideal $(w,x,y,z)$ is a proper nontrivial $D_{R|\kk}$-submodule of $R$, whence $R$ is not a simple $D_{R|\kk}$-module.
   For this ring, the residue field $R/(w,x,y,z)\cong \kk$ is a $D_{R|\kk}$-module; any filtration on this module has dimension zero.
\end{example}

\begin{example}\label{eg52}
   Let $\kk$ be a field of characteristic zero, and
   \[	S =\frac{\kk[s, t, u, v, w, x, y, z]}{(su^2x^2 + sv^2y^2 + tuxvy + tw^2z^2)}.
   \]
   This is a standard graded hypersurface domain with rational singularities that has a local cohomology module with infinitely many associated primes \cite[Theorem~5.1]{SinghSwanson}, and hence not a Bernstein algebra by \Cref{prop: finite ass}.
\end{example}

%%%%%%%%%%%%%%%%%%%%%%%%%%%%%%%%%%%%%%%%%%%%%
\subsection{Holonomicity and differential direct summands} \label{Sec52}
%%%%%%%%%%%%%%%%%%%%%%%%%%%%%%%%%%%%%%%%%%

Let $R$ be a polynomial ring over a field $\kk$ of characteristic zero and let $G$ be a finite group acting linearly on $R$. 
The ring of invariants $R^G $ is then a direct summand of $R$.
Namely, the inclusion $R^G \hookrightarrow R$ has a splitting $\beta: R \to R^G$ given by the Reynolds operator.
This is one of the main examples in the theory of \emph{differential direct summands}  \cite{AMHNB,square}.
We briefly recall the basics in our setting and refer to loc.~cit.\ for more insight.

First, notice that for any $\delta\in D_{R|\kk}$, the map $\beta \circ \delta|_{R^G} \colon R^G \to R^G$ is an element of $D_{R^G|\kk}$.
That is, we have the following diagram:
\[
   \xymatrix{
            R^G\, \ar@{.>}[d] \ar@{^{(}->}[r] & R\ar[d]^{\delta}\\
            R^G & R \ar[l]_{\beta} }
\]

We say that a $D_{R^G|\kk}$-module $M$ is a \emph{differential direct summand} of a $D_{R|\kk}$-module $N$ if $M\subseteq N$ and there exists an $R^G$-linear splitting $\Theta\colon N\to M$, called a \emph{differential splitting}, such that
\[  \Theta(\delta \act v) = (\beta \circ \delta|_{R^G}) \act v \]
for every $\delta\in D_{R|\kk}$ and $v\in M$, where the action on the left-hand side is the $D_{R|\kk}$-action, considering $v$ as an element of $N$, and the action on the right-hand side is the $D_{R^G|\kk}$-action.
Among the properties that these modules satisfy we have ${\rm length}_{D_{R^G|\kk}}(M) \leq {\rm length}_{D_{R|\kk}}(N)$ \cite[Proposition~3.4]{AMHNB}.
In particular, any differential direct summand of a holonomic $D_{R|\kk}$-module has finite length.

The main examples of differential direct summands of holonomic modules are the rings $R^G \subseteq R$ themselves, the localizations $R^G_f \subseteq R_f$ at elements $f\in R^G$, and the local cohomology modules $H^i_I(R^G) \subseteq H^i_{IR}(R)$ at ideals $I\subseteq R^G$. 

The goal of this subsection is to prove that the holonomic $D_{R^G|\kk}$-modules are precisely the differential direct summands of holonomic $D_{R|\kk}$-modules.

\begin{theorem} \label{holonomic_direct_summand}
   Let $G$ be a finite group acting linearly on a polynomial ring $R$ over a field $\kk$ of characteristic zero, and suppose that $G$ contains no pseudoreflections.
   Then a $D_{R^G|\kk}$-module $M$ is holonomic if and only if it is a differential direct summand of a holonomic $D_{R|\kk}$-module with respect to the splitting $\beta:R\to R^G$ given by the Reynolds operator.
\end{theorem}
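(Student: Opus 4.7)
The plan is to exploit the identification $D_{R^G|\kk} \cong (D_{R|\kk})^G$ provided by \Cref{lem:nopseudo} (applicable since $G$ contains no pseudoreflections, so we may take $H = \{1\}$ in that lemma), placing us in the framework of \Cref{setup:finite-char0}. Fixing generalized Bernstein filtrations $\gbf{\bullet}{R}$ and $\gbf{\bullet}{R^G}$ of the same integral slope, we obtain $\gbf{\bullet}{R^G} = \gbf{\bullet}{R} \cap D_{R^G|\kk} \subseteq \gbf{\bullet}{R}$, and both $R$ and $R^G$ are Bernstein algebras by \Cref{prop:bavula-poly,thm:dim-Bernstein-filtration} and \Cref{Cor_invariant_Bernstein}, respectively.

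For the sufficient direction, suppose $M$ is a differential direct summand of a holonomic $D_{R|\kk}$-module $N$ via a splitting $\Theta\colon N \to M$. A filtration $\filtration{G}{\bullet}{}$ on $N$ witnessing holonomicity over $D_{R|\kk}$---compatible with $\gbf{\bullet}{R}$, of dimension $\dim R$ with finite multiplicity---is automatically compatible with the smaller filtration $\gbf{\bullet}{R^G}$, and since $\dim R^G = \dim R$, it also witnesses holonomicity of $N$ viewed as a $D_{R^G|\kk}$-module by restriction. Using that $\Theta$ is $D_{R^G|\kk}$-linear (a property of differential splittings established in \cite{AMHNB,square}), $M \cong N/\ker\Theta$ is a $D_{R^G|\kk}$-quotient of a holonomic module, hence holonomic by \Cref{PropSubQuotSum}.

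For the necessary direction, given a holonomic $D_{R^G|\kk}$-module $M$, the plan is to construct the induced module $N \coloneqq D_{R|\kk} \otimes_{D_{R^G|\kk}} M$, a left $D_{R|\kk}$-module, using that $D_{R|\kk}$ is a right $D_{R^G|\kk}$-module via the inclusion. The $\kk$-linear map $\pi\colon D_{R|\kk} \to D_{R^G|\kk}$ defined by $\pi(\delta) = \beta \circ \delta|_{R^G}$ is a $D_{R^G|\kk}$-bimodule retraction of that inclusion---reflecting that $G$-invariant operators commute with the Reynolds operator $\beta$---and thus induces an $R^G$-linear map $\Theta\colon N \to M$, $\delta \otimes v \mapsto \pi(\delta) v$, that splits the natural embedding $v \mapsto 1 \otimes v$ and satisfies the differential splitting condition. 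To see that $N$ is holonomic over $D_{R|\kk}$, \Cref{lem:right-mod-gens-finite} furnishes elements $\gamma_1,\ldots,\gamma_\ell \in \gbf{v}{R}$ with $\gbf{i}{R} \subseteq \sum_j \gamma_j \cdot \gbf{i+v}{R^G}$; given a good filtration $\tildefiltration{G}{\bullet}{}$ on $M$, I would set $\filtration{G}{i}{} \coloneqq \sum_j \gamma_j \otimes \tildefiltration{G}{Ki}{}$ on $N$ for $K$ sufficiently large. Then $\Dim(\filtration{G}{\bullet}{}) \leq \dim R^G = \dim R$ with finite multiplicity, and Bernstein's inequality for the Bernstein algebra $R$ forces equality, making $N$ holonomic.

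The main technical obstacle will be choosing $K$ to ensure compatibility $\gbf{i}{R}\cdot \filtration{G}{j}{} \subseteq \filtration{G}{i+j}{}$: a product $\delta\gamma_j$ with $\delta \in \gbf{i}{R}$ lies in $\gbf{i+v}{R}$ and hence, by \Cref{lem:right-mod-gens-finite}, can be rewritten as $\sum_k \gamma_k \delta_k$ with $\delta_k \in \gbf{i+2v}{R^G}$; after sliding $\delta_k$ across the tensor, the choice $K \geq 1 + 2v$ absorbs the resulting shift. Verifying that the embedding $v \mapsto 1 \otimes v$ is injective is then automatic from the existence of the retraction $\Theta$.
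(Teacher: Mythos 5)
Your ``necessary'' direction (holonomic $\Rightarrow$ differential direct summand) is essentially the paper's own argument: the same induced module $N=D_{R|\kk}\otimes_{D_{R^G|\kk}}M$, the same filtration built from the right-module generators $\gamma_j$ of \Cref{lem:right-mod-gens-finite} applied to a rescaled good filtration of $M$ (your constant $K\ge 1+2v$ plays the role of the paper's enlarged $v$; note the lemma itself does not place the $\gamma_j$ in $\gbf{v}{R}$, but one arranges this by enlarging $v$), and the same splitting, since $\beta\circ\delta|_{R^G}$ coincides with the averaging map $\rho(\delta)=\tfrac{1}{|G|}\sum_{g\in G} g\cdot\delta$, which is indeed a $D_{R^G|\kk}$-bimodule retraction. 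That half is correct.

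The gap is in your ``sufficient'' direction, at the step ``$\Theta$ is $D_{R^G|\kk}$-linear\dots{} so $M\cong N/\ker\Theta$.'' A differential splitting is, by definition, only an $R^G$-linear splitting satisfying $\Theta(\delta \act v)=(\beta\circ\delta|_{R^G})\act v$ for $v\in M$; the cited sources \cite{AMHNB,square} establish properties such as the length inequality, not $D_{R^G|\kk}$-linearity of $\Theta$ on all of $N$, and such linearity is not automatic: outside the submodule $D_{R|\kk}\act M$ the defining identity imposes no constraint, so $\Theta$ can be perturbed by any $R^G$-linear map vanishing on $D_{R|\kk}\act M$ and will then fail to commute with the $(D_{R|\kk})^G$-action. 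Hence $\ker\Theta$ need not be a $D_{R^G|\kk}$-submodule of the restricted $N$, and the quotient step is unjustified as written. The repair is short: replace $N$ by $N'=D_{R|\kk}\act M$, which is again holonomic by \Cref{PropSubQuotSum} and still has $M$ as a differential direct summand; on $N'$ the defining identity together with $\rho(\delta'\delta)=\delta'\rho(\delta)$ for $\delta'\in(D_{R|\kk})^G$ (invariant operators commute with the Reynolds operator) gives $\Theta(\delta' u)=\delta'\act\Theta(u)$ for all $u\in N'$, after which your restriction-of-scalars argument goes through. This route also differs from the paper's converse, which does not restrict scalars but instead transports a filtration of $N$ through $\Theta$, using $\gbf{i}{R^G}=\rho(\gbf{i}{R})$, and bounds its dimension and multiplicity directly; in either approach, the equivariance of $\Theta$ on $D_{R|\kk}\act M$ is exactly the point that must be supplied rather than quoted.
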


\begin{proof}
   As in \Cref{setup:finite-char0}, we can identify $D_{R^G|\kk}$ with $(D_{R|\kk})^G \subseteq D_{R|\kk}$, and choose generalized Bernstein filtrations so that $\gbf{\bullet}{R^G} = \gbf{\bullet}{R} \cap D_{R^G|\kk}$.
   Let $M$ be a holonomic left $D_{R^G|\kk}$-module, and $\filtration{G}{\bullet}{}$ a good filtration on $M$ compatible with $\gbf{\bullet}{R^G}$.
   As in \Cref{lem:right-mod-gens-finite}, take $\gamma_1,\dots,\gamma_\ell\in D_{R|\kk}$ and a positive integer $v$ such that $\gbf{i}{R} \subseteq \gamma_1 \cdot \gbf{i+v}{R^G} + \cdots + \gamma_\ell \cdot \gbf{i+v}{R^G}$ for all $i$.
   By possibly replacing $v$ with a larger value, we may assume that $\gbf{i}{R}\cdot \gamma_{t} \subseteq \gamma_1 \cdot \gbf{i+v}{R^G} + \cdots + \gamma_\ell \cdot \gbf{i+v}{R^G}$ for each $i$ and $t$.

   Let $N= D_{R|\kk} \otimes_{D_{R^G|\kk}} M$, and for each $i \in \NN$ let $\filtration{H}{i}{}$ be the $\kk$-subspace of $N$ spanned by $\{\gamma_1 \otimes \filtration{G}{vi}{},\dots, \gamma_\ell \otimes \filtration{G}{vi}{}\}$.
   Then $\filtration{H}{\bullet}{}$ is finite dimensional, ascending, and, as the $\gamma_t$ generate $D_{R|\kk}$ over $D_{R^G|\kk}$, exhaustive.
   Note that if $m\in \filtration{G}{vj}{}$ and $i$ is positive, then
   \[
      \gbf{i}{R} (\gamma_t\otimes m )\subseteq
      \sum_{k=1}^\ell \gamma_k \gbf{i+v}{R^G} \otimes m =
      \sum_{k=1}^\ell \gamma_k \otimes \gbf{i+v}{R^G} m \subseteq
      \sum_{k=1}^\ell \gamma_k \otimes \filtration{G}{vi+vj}{}
   \]
   which shows that $\gbf{i}{R} \filtration{H}{j}{} \subseteq \filtration{H}{i+j}{}$.
   The same holds for $i=0$, since $\gbf{0}{R}=\kk$.
   Thus, $\filtration{H}{\bullet}{}$ is a filtration compatible with $\gbf{\bullet}{R}$.
   It follows easily from the fact that $M$ is holonomic that $\filtration{H}{\bullet}{}$ has dimension $\dim(R)$ and finite multiplicity, so $N$ is holonomic.

   Now we check that $M$ is a differential direct summand of $N$.
   The averaging map $\rho:D_{R|\kk} \to (D_{R|\kk})^G$ given by $\delta \mapsto \frac{1}{|G|} \sum_{g\in G} g \cdot \delta$ induces a $D_{R^G|\kk}$-linear splitting $\Theta$ of the natural map $M\to N$; in particular, $M$ injects into $N$.
   As $(\beta \circ \delta)|_{R^G} = \rho(\delta)$ for each $\delta \in D_{R|\kk}$, the maps $\beta$ and $\Theta$ induce a differential direct summand structure.

   For the converse, let $M$ be a differential direct summand of $N$ with respect to~$\beta$ and some $\Theta$.
   It follows from %\Cref{rem:gbf-and-group-action}
   \eqref{eq: relating filtrations}
   that $\gbf{i}{R^G} = \rho(\gbf{i}{R})$, so any element of $\gbf{i}{R^G}$ can be realized as $(\beta\circ \delta)|_{R^G}$ for some $\delta\in \gbf{i}{R^G}$.
   It then follows from the differential direct summand condition that the image under $\Theta$ of a good filtration for $N$ is compatible with the generalized Bernstein filtration $\gbf{\bullet}{R^G}$ on $D_{R^G|\kk}$, and has dimension at most $\dim(R^G)$ and finite multiplicity. 
\end{proof}

\subsection{Comparison with van den Essen's notion of holonomicity}

We briefly discuss the notion of holonomicity of van den Essen \cite{VdE86} arising from Gabber's theorem on involutivity of singular supports \cite{Gabber}. We refer the reader to these sources for further details.

Let $\kk$ be a field of characteristic zero and $A$ a not necessarily commutative $\kk$-algebra with a filtration $\filtration{F}{\bullet}{}$ by $\kk$-vector spaces that are not necessarily finite dimensional---that is, a collection of subspaces that satisfies all of the axioms of a filtration in \Cref{notation:filtration} except finite dimensionality. If $\gr(\filtration{F}{\bullet}{})$ is commutative, then it admits the structure of a \emph{Poisson algebra}, i.e., a commutative ring with a Lie bracket 
\[ \{ - , - \} \ : \ \gr(\filtration{F}{\bullet}{})\times \gr(\filtration{F}{\bullet}{}) \to \gr(\filtration{F}{\bullet}{}) \] such that for each $g\in \gr(\filtration{F}{\bullet}{})$, the map
\[ \{ g, - \} \ : \ \gr(\filtration{F}{\bullet}{}) \to \gr(\filtration{F}{\bullet}{})\] 
is a derivation; this Lie bracket is called a Poisson bracket. The Poisson bracket on $\gr(\filtration{F}{\bullet}{})$ is defined as follows: For $g \in \gr(\filtration{F}{\bullet}{})_a$ and $h\in \gr(\filtration{F}{\bullet}{})_b$, write $g= \delta+ \filtration{F}{a-1}{}$ and $h= \eta + \filtration{F}{b-1}{}$, and set
\[ \{ g ,h \} = [ \delta, \eta ] + \filtration{F}{a+b-2}{} \in \gr(\filtration{F}{\bullet}{})_{a+b-1}.\]

Suppose that $\gr(\filtration{F}{\bullet}{})$ is a finitely generated $\kk$-algebra. Let $M$ be a finitely generated $A$-module and $\filtration{G}{\bullet}{}$ be a good filtration of $M$ with respect to the filtration~$\filtration{F}{\bullet}{}$. One then has that the ideal \[ I = \ann_{\gr(\filtration{F}{\bullet}{})}(\gr(\filtration{G}{\bullet}{}))\] satisfies $\{I,I\} \subseteq I$; that is, $I$ is \emph{involutive}. A famous result of Gabber \cite{Gabber} shows that in this setting  $\sqrt{I}$ is also involutive. When $R$ is a polynomial ring over $\kk$, one can then deduce Bernstein's inequality by relating the Poisson structure on the associated graded algebra of $D_{R|\kk}$ via the order filtration with symplectic geometry and showing that any involutive ideal has height bounded by the dimension of $R$.

Following Gabber's theorem, van den Essen posed a definition for a notion of holonomicity in rings with a filtration $\filtration{F}{\bullet}{}$ for which the associated graded ring is a finitely generated commutative algebra over a field $\kk$. Namely, he defines a nonzero module $M$ over such a ring to be holonomic if, for a good filtration of $M$, the height of every minimal prime of the annihilator of the associated graded module of $M$ is equal to the maximal height of a graded involutive prime ideal in the associated graded algebra $\gr(\filtration{F}{\bullet}{})$.

The purpose of this section is to note that for invariant rings of finite groups in characteristic zero, there may be no nonzero modules that are holonomic in the sense of van den Essen.

\begin{example} \label{example:noVDE} Let $R$ be a polynomial ring over a field $\kk$, and $G$ a finite group acting linearly on $R^G$ with no pseudoreflections. Note that $D^1_{R^G|\kk}$ has no elements of negative degree; this follows from the equality $D^1_{R^G|\kk}=(D^1_{R|\kk})^G$ and the observation that the action of $G$ on the associated graded ring of $D_{R|\kk}$ again has no pseudoreflections. Let $A$ be the associated graded ring of $D_{R^G|\kk}$ with respect to the order filtration. Note that $A$ is bigraded with a first grading coming from the grading on $D_{R^G|\kk}$ and a second grading arising from the associated graded structure; write $A_{i,j}$ for the bigraded pieces. Note that $A_{0,0}\cong\kk$. 

   We claim that $A_+ \coloneqq \bigoplus_{i\in \NN} A_{i,0} \oplus \bigoplus_{i \in \ZZ, j>0} A_{i,j}$ is an involutive ideal.
   It suffices to check that for homogeneous elements $a,b\in A_+$ we have $\{a,b\}\in A_+$. Let $a\in A_{i,j}$ and $b\in A_{k,\ell}$, so $\{a,b\}\in A_{i+k,j+\ell-1}$. Thus it suffices to deal with the case $j=1, \ell=0$. For $\{a,b\}\notin A_+$, we must have $i+k=0$. However, $A_{k,0}=0$ for $k<0$ and $A_{i,1} = 0$ for $i<0$ since there are no operators of order $1$ and negative degree. But if $b\in A_{0,0}$, we have $\{a,b\}=0\in A_+$.

Thus, for a nonzero $D_{R^G|\kk}$-module $M$ to be holonomic in the sense of van den Essen, the annihilator of an associated graded must be primary to the homogeneous maximal ideal of $A$, so $M$ must be a finite-dimensional $\kk$-vector space. The existence of such a module is contradicted by \Cref{Cor_invariant_Bernstein}.
\end{example}

%%%%%%%%%%%%%%%%%%%%%%%%%%%%%%%%%%%%%%%%%%%%%%%%
\section{Strongly $F$-regular finitely generated graded algebras with FFRT}
%%%%%%%%%%%%%%%%%%%%%%%%%%%%%%%%%%%%%%%%%%%%%%%%%

In this section, we focus on commutative rings of prime characteristic, more precisely on the class of rings with finite $F$-representation type, introduced by Smith and Van den Bergh \cite{SVDB}.
We prove that if such a ring is strongly $F$-regular, then it is a Bernstein algebra, and thus Bernstein's inequality is satisfied, and the dimension of $D_{R|\kk}$ is twice the dimension of $R$.

To discuss finite $F$-representation type, we need a class of rings for which the analogue of the Krull--Schmidt Theorem holds, and over which the Frobenius map is finite.
Throughout this section we work in the following setting.

\begin{setup} \label{Notation_graded_prime}
   Let $\kk$ be a perfect field of prime characteristic $p$, and $R$ a \cga, as in \Cref{conv: commutative graded ring}, that is a domain.
   Set $\m=\bigoplus_{i>0} R_i$, $n=\dim_\kk \m/\m^2$, and $w=\max\{t\in\NN\;|\;[ \m/\m^2]_t\neq 0\}$.
\end{setup}

In the remarks that follow we introduce a suitable category of modules to work with, and make some considerations about the structure of rings of endomorphisms of such modules---especially concerning maximal homogeneous two-sided ideals.

\begin{remark}[A Krull--Schmidt category]
   \label{rem: KS category}
   If $R$ is as in \Cref{Notation_graded_prime}, then the category $\cat$ of finitely generated $\QQ$-graded $R$-modules and homogeneous homomorphisms is a Krull--Schmidt category---that is, every nonzero object in $\cat$ can be expressed uniquely, up to order and isomorphism, as a direct sum of indecomposable objects.
   Indeed, given objects $M$ and $N$ in $\cat$, suppose $M$ is generated by homogeneous elements of degrees $d_1,\ldots,d_m$, and let us denote the $\kk$-vector space of homogeneous homomorphisms $M\to N$ by $\HomogHom{R}{M}{N}$.
   The map $\HomogHom{R}{M}{N} \to \bigoplus_{i=1}^m\Hom_\kk(M_{d_i},N_{d_i})$ given by restriction is an injective $\kk$-linear map; its codomain is a finite-dimensional $\kk$-vector space, and thus so is its domain, $\HomogHom{R}{M}{N}$.
   This implies that $\cat$  is a Krull--Schmidt category \cite[Corollary to Lemma~3, Theorem~1]{Atiyah}.
\end{remark}

\begin{remark}[On the structure of rings of endomorphisms]
   \label{RemMaximalEnd}
   For every $\QQ$-graded $R$-module $M$, the module of (not necessarily graded) $R$-endomorphisms $\Lambda = \End_{R}(M)$ is a ring with unity, which is usually not commutative.
   If $M$ is finitely generated, then $\Lambda$ is a $\QQ$-graded ring, with the decomposition $\Lambda = \bigoplus_{i\in \QQ} \Lambda_{i}$, where $\Lambda_i = \GrEnd{R}{M}{i}$ consists of the graded endomorphisms of degree $i$ for each $i\in \QQ$ \cite[Corollary~I.2.11]{GradedRingTheory}.
   If, in addition, $M$ is indecomposable in the category $\cat$ of \Cref{rem: KS category}, then the degree~0 component $\Lambda_0 = \HomogEnd{R}{M}$ is a local ring, that is, the set of all nonunits is a two-sided ideal, which is necessarily the unique maximal (right, left, and two-sided) ideal of $\Lambda_0$ \cite[Lemma~7]{Atiyah}.
   This, in turn, implies that $\Lambda$ has a unique maximal homogeneous (left, right, and two-sided) ideal---namely, the ideal generated by all the homogeneous nonunits, or equivalently, the set of all elements whose homogeneous components are all nonunits \cite[Theorem~2.5]{Li}.

   If $N\cong M^\alpha$, where $M$ is indecomposable in $\cat$, then $\End_{R}(N)$ can be identified with $\End_{R}(M)^{\alpha\times \alpha},$ the ring of $\alpha\times \alpha$ matrices with entries in $\End_{R}(M)$.
   Since all homogeneous two-sided ideals of $\End_{R}(M)^{\alpha \times \alpha}$ are of the form $\mathfrak{A}^{\alpha \times \alpha}$, for $\mathfrak{A}$ a homogeneous two-sided ideal of $\End_{R}(M)$, it follows that $\End_{R}(N)$ also has a unique maximal homogeneous two-sided ideal, namely the set of all matrices whose entries decompose as sums of homogeneous nonunits.

   Note that if $N'$ is also a direct sum of $\alpha$ copies of $M$, but with rational degree shifts applied to the various summands, then $\End_R(N')$ agrees with $\End_R(N)$ as an ungraded ring, but their gradings may differ by shifts in the off-diagonal entries.
   The endomorphism ring $\End_R(N')$ again has a unique maximal homogeneous two-sided ideal---in fact the same one as $\End_R(N)$, but with possible changes in the grading. 

   Generalizing further, suppose now that
   \[N=M^{\alpha_1}_1\oplus \cdots \oplus M^{\alpha_\ell}_\ell\]
   where $M_1,\ldots, M_\ell$ are indecomposable in $\cat$, and $M_i$ is not graded-isomorphic to $M_j(d)$ for any $d\in \QQ$
and $i\neq j$.
   An endomorphism $\phi \in \End_{R}(N)$ can be viewed as a block matrix $(\phi_{ij})_{1\le i,j \le \ell}$, where $\phi_{ij} \in \Hom_{R}(M_j,M_i)^{\alpha_i \times \alpha_j}$.
   The endomorphism ring $\End_{R}(N)$ has $\ell$ maximal homogeneous two-sided ideals, $\N_1,\ldots,\N_\ell$, where each~$\N_k$ consists of all~$(\phi_{ij})$ such that every homogeneous component of every entry of~$\phi_{kk}$ is a nonunit.
   Although this appears to be a well-known fact to the specialists, for lack of an appropriate reference, we provide the proof below, in \Cref{prop: maximal ideals of End(N)}.
   Once again, the considerations just made are unaffected by degree shifts in the various components of $N$. 
\end{remark}

\begin{proposition}
   \label{prop: maximal ideals of End(N)}
   Suppose $N = M_1^{\alpha_1}\oplus \cdots \oplus M_\ell^{\alpha_\ell}$, as in \Cref{RemMaximalEnd}.
   As in that remark, we view the endomorphisms of $N$ as block matrices $(\phi_{ij})_{1\le i,j\le \ell}$, with $\phi_{ij} \in \Hom_{R}(M_j,M_i)^{\alpha_i \times \alpha_j}$. 
   For each $k=1,\ldots,\ell$, let $\n_k$ be the unique maximal homogeneous two-sided ideal of $\End(M_k)^{\alpha_k\times\alpha_k}$, and let $\N_k$ be the subset of $\End_R(N)$ consisting of block matrices $(\phi_{ij})$ with $\phi_{kk} \in \n_k$ \textup(that is, every homogeneous component of every entry of $\phi_{kk}$ is a nonunit\textup).
   Then $\N_1,\ldots,\N_\ell$ are all of the maximal homogeneous two-sided ideals of $\End_R(N)$. 
\end{proposition}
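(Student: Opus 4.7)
The plan is to construct, for each $k \in \{1,\ldots,\ell\}$, a surjective ring homomorphism $\pi_k \colon \End_R(N) \to \End_R(M_k^{\alpha_k})/\n_k$ whose kernel is precisely $\N_k$. Since the codomain possesses only two homogeneous two-sided ideals---the zero ideal and itself---by virtue of $\n_k$ being the \emph{unique} maximal homogeneous two-sided ideal of $\End_R(M_k^{\alpha_k})$, this identifies $\N_k$ as a maximal homogeneous two-sided ideal. I would then use the diagonal idempotents in $\End_R(N)$ to rule out any other maximal homogeneous two-sided ideals.

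The map $\pi_k$ is defined by sending a block matrix $(\phi_{ij})$ to the class of $\phi_{kk}$ modulo $\n_k$. Surjectivity and $\kk$-linearity are immediate, and the essential point is multiplicativity. Since $(\phi\psi)_{kk} = \phi_{kk}\psi_{kk} + \sum_{i\ne k}\phi_{ki}\psi_{ik}$, I would argue that every off-diagonal contribution $\phi_{ki}\psi_{ik}$ lies in $\n_k$. This reduces to showing that for $i \ne k$, any homogeneous composition $\alpha\beta \in \End_R(M_k)$, with $\alpha \colon M_i \to M_k$ of degree $d$ and $\beta \colon M_k \to M_i$ of degree $e$, is a non-unit. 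If $d+e \ne 0$, then $\alpha\beta$ is homogeneous of nonzero degree, and a unit would yield a graded isomorphism $M_k \cong M_k(d+e)$, impossible for a finitely generated module over the positively graded ring $R$. If $d+e = 0$ and $\alpha\beta$ were a unit, then $\beta$ would split $M_k$ off as a graded summand of $M_i(e)$, and indecomposability of $M_i$ would force $M_i \cong M_k(-e)$, contradicting the hypothesis. Decomposing the matrix entries of $\phi_{ki}$ and $\psi_{ik}$ into homogeneous pieces and invoking the locality of $\HomogEnd{R}{M_k}$ (so that non-units of degree zero form an ideal), each entry of $\phi_{ki}\psi_{ik}$ becomes a sum of homogeneous non-units of $\End_R(M_k)$, and so lies in $\n_k$ by the characterization given in \Cref{RemMaximalEnd}.

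For the uniqueness claim, let $\M$ be an arbitrary maximal homogeneous two-sided ideal of $\End_R(N)$. For each $k$, the image $\pi_k(\M)$ is a graded two-sided ideal of $\End_R(M_k^{\alpha_k})/\n_k$, and so equals either $0$ or the entire quotient. If $\pi_k(\M) = 0$ for some $k$, then $\M \subseteq \N_k$ and maximality forces $\M = \N_k$. Otherwise, letting $e_1,\ldots,e_\ell$ denote the diagonal idempotents of $\End_R(N)$ corresponding to the direct summands of $N$, for each $k$ there exists $\phi^{(k)} \in \M$ with $\phi^{(k)}_{kk} \notin \n_k$; then $e_k\phi^{(k)}e_k$ lies in the two-sided ideal $\M \cap e_k\End_R(N)e_k$ of $e_k\End_R(N)e_k = \End_R(M_k^{\alpha_k})$ but not in $\n_k$. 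By maximality of $\n_k$, this intersection must equal $\End_R(M_k^{\alpha_k})$, so $e_k \in \M$, and summing yields $1 = e_1+\cdots+e_\ell \in \M$, a contradiction. The principal obstacle is the multiplicativity verification for $\pi_k$, which delicately combines the Krull--Schmidt hypothesis on the $M_i$, the locality of the degree-zero endomorphism ring of each indecomposable, and the absence of homogeneous units of nonzero degree.
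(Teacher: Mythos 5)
Your proof is correct and follows essentially the same route as the paper: the heart of both arguments is the observation that homogeneous compositions $M_k \to M_i \to M_k$ with $i \ne k$ are nonunits (via indecomposability/Krull--Schmidt and the absence of graded self-shift isomorphisms), and both exclude further maximal ideals by cutting down with the diagonal idempotents to reach $1 \in \M$. Your packaging of the first half as multiplicativity of the quotient maps $\pi_k$ with kernel $\N_k$ is just a cosmetic reorganization of the paper's direct verification that each $\N_k$ is a homogeneous two-sided ideal containing every proper homogeneous two-sided ideal's obstruction, though it does make the maximality of $\N_k$ slightly more explicit.
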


\begin{proof}
   We first show that $\N_1$ is a homogeneous two-sided ideal.
  We have that $\N_1$ is a $\QQ$-graded additive subgroup of $\End_R(N)$, and the multiplicative properties of an ideal can thus be verified using homogeneous elements.   
   Take homogeneous elements $\Phi = (\phi_{ij}) \in \N_1$ and $\Psi = (\psi_{ij}) \in \End_R(N)$, and suppose $\Phi \Psi \notin \N_1$, so the upper left block of that product, $\sum_{k=1}^\ell \phi_{1k}\psi_{k1}$, does not lie in $\n_1$.
   As $\phi_{11}\in \n_1$, this implies that $\phi_{1k}\psi_{k1} \notin \n_1$ for some $k\in \{2,\ldots,\ell\}$.
   This, in turn, implies that there are entries $\delta: M_k \to M_1$ in $\phi_{1k}$, and $\gamma: M_1 \to M_k$ in $\psi_{k1}$, such that $u\coloneqq \delta\gamma: M_1 \to M_1$ is a unit.
   Thus, $\gamma$ is injective, and $u^{-1}\delta$ is a splitting of $\gamma$.
   As $M_k$ is indecomposable, $\gamma$ must be surjective as well---but this contradicts our assumption that $M_1$ is not graded-isomorphic to $M_k(d)$ for any $d\in \QQ$.
   This shows that $\Phi\Psi \in \N_1$, and the proofs that $\Psi\Phi \in \N_1$, and that the other $\N_k$ are homogeneous two-sided ideals, follow similar steps.

   We now show that each proper homogeneous two-sided ideal $\A$ is contained in some $\N_k$.
   For each $k=1,\ldots,\ell$, let $\ideala_k$ be the image of $\A$ under the projection $\End_R(N) \twoheadrightarrow \End_R(M_k)^{\alpha_k\times\alpha_k}$, and note that $\ideala_k$ is a homogeneous two-sided ideal of $\End_R(M_k)^{\alpha_k\times\alpha_k}$.
   If $\ideala_k$ is not proper, then $\A$ contains an element $\Phi=(\phi_{ij})$ where $\phi_{kk} = \mathbf{1}$, the identity of $\End_R(M_k)^{\alpha_k\times\alpha_k}$.
   Multiplying $\Phi$ on both sides by the element $\Psi_k = (\psi_{ij}) \in \End_R(N)$ with $\psi_{kk} = \mathbf{1}$ and zeros elsewhere, we see that $\Psi_k$ itself lies in $\A$.
   If none of the $\ideala_k$ were proper, this reasoning would show that $\A$ contains $\sum_{k=1}^\ell\Psi_k$, the identity of $\End_R(N)$, contradicting its properness.
   Thus, $\ideala_k$ is proper for some $k$, whence $\ideala_k \subseteq \n_k$ and $\A \subseteq \N_k$.
\end{proof}

Next, we recall definitions that involve the structure of rings regarding the Frobenius homomorphism.
To this end, note that the ring of $p^e$-th roots $R^{1/p^e}$ is a $\QQ$-graded (in fact, $\frac{1}{p^e}\NN$-graded) $R$-module, with $(R^{1/p^e})_{i/p^e}=(R_i)^{1/p^e}$, and under this grading the inclusion $R\hookrightarrow R^{1/p^e}$ is homogeneous.
Note also that, because $R$ is a finitely generated commutative algebra over a perfect field, $R$ is $F$-finite, that is, $R^{1/p^e}$ is a finitely generated $R$-module.

\begin{definition}[{\cite[Definition~3.1.1]{SVDB}}]
   Let $R$ be as in \Cref{Notation_graded_prime}.
   The ring $R$ has \emph{finite $F$-representation type}, \emph{FFRT} for short, if there exists a finite set of indecomposable finitely generated $\QQ$-graded $R$-modules, $M_1,\ldots,M_\ell$, such that $R^{1/p^e}$ is isomorphic to a direct sum of finitely many copies of the $M_i$, with possible rational degree shifts, for every $e\in\NN$.
\end{definition}

Rings of finite $F$-representation type include invariant rings of linearly reductive groups, simple hypersurface singularities, and the homogeneous coordinate ring of $\mathrm{Gr}(2,n)$ \cite{SVDB,RSV1,RSV2}.

\begin{definition}
   Let $R$ be as in \Cref{Notation_graded_prime}.
   The \emph{splitting ideals} of $R$ are defined by
   \[I_e(R)=\{r\in R \;|\; \varphi(r^{1/p^e})\in\m\text{ for each } \varphi\in \Hom_R(R^{1/p^e}, R)\}\]
   for each $e\in\NN$.
   The ring $R$ is \emph{$F$-pure} if $I_e(R)\neq R$ for some (equivalently, all) $e\geq 1$, and \emph{strongly $F$-regular} if $\bigcap_{e\in\NN} I_e(R)=0$.
\end{definition}

We point out that these definitions for $F$-purity and strong $F$-regularity are not the usual ones found in the literature. 
Hochster and Roberts \cite{HR1976} were the first to study $F$-purity, while the notion of strong $F$-regularity was originally defined by Hochster and Huneke \cite{HoHuStrong}.
The splitting ideals $I_e(R)$ first appeared in the work of Aberbach and Enescu \cite{AE} and Yao \cite{Yao06}, although in a different formulation; our formulation appears in the work of Tucker \cite{Tucker12}.

The following result was proved by Polstra and Tucker in the case of complete local rings, and extends to our setting through localization and completion at $\m$.

\begin{lemma}[{\cite[Lemma~5.2, second proof of Theorem~5.1]{PT}}]\label{LemmaUniformPowers}
   Let $R$ be as in \Cref{Notation_graded_prime}.
   If $R$ is strongly $F$-regular, then there exists $a\in\NN$ such that
   \[
      \pushQED{\qed}
      I_{a+e}(R)\subseteq \m^{p^e} \quad \text{for all $e\in \NN$}.
      \qedhere
      \popQED
    \]
\end{lemma}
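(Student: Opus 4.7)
The plan is to reduce to the complete local setting, apply the Polstra--Tucker result \cite[Lemma~5.2, second proof of Theorem~5.1]{PT} there, and then descend the conclusion back to $R$. Write $S$ for the $\m$-adic completion of $R_\m$, with maximal ideal $\n = \m S$; the natural map $R \to S$ is flat, and $S$ is $F$-finite because $R$ is.

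The first task is to verify that $S$ is strongly $F$-regular and that splitting ideals behave well under the base change $R \to S$. Since $R$ is $F$-finite, $R^{1/p^e}$ is a finitely presented $R$-module, so flat base change gives the natural isomorphism
\[\Hom_R(R^{1/p^e},R) \otimes_R S \;\cong\; \Hom_S(S^{1/p^e}, S).\]
Chasing definitions and using $\m S = \n$, this yields $I_e(S) = I_e(R)\, S$ for every $e \in \NN$. Strong $F$-regularity is well known to pass to localizations and to $\m$-adic completions for $F$-finite rings, so $S$ is strongly $F$-regular as well.

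Applying the Polstra--Tucker result to the complete local strongly $F$-regular ring $(S, \n)$ produces an integer $a \in \NN$ such that $I_{a+e}(S) \subseteq \n^{p^e} = \m^{p^e} S$ for every $e \in \NN$. Combined with $I_{a+e}(R)\, S = I_{a+e}(S)$, this gives $I_{a+e}(R)\, S \subseteq \m^{p^e} S$, and faithful flatness of $S$ over $R_\m$ yields $I_{a+e}(R)\, R_\m \subseteq \m^{p^e} R_\m$. Finally, since $R$ is a domain and $\m$ is maximal, $\m^{p^e}$ is $\m$-primary, so $\m^{p^e} R_\m \cap R = \m^{p^e}$; contracting the previous inclusion back to $R$ delivers the desired $I_{a+e}(R) \subseteq \m^{p^e}$.

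The main technical point is the identification $I_e(R)\, S = I_e(S)$, which rests on commuting completion with $\Hom_R(R^{1/p^e}, -)$; this in turn hinges on the finite presentation of $R^{1/p^e}$, that is, on the $F$-finiteness of $R$. Once this compatibility is in place, everything else is a routine combination of flat base change and contraction, with the Polstra--Tucker theorem doing the real work in the complete local setting.
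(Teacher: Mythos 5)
Your argument is correct and follows essentially the same route as the paper, which likewise obtains the statement by citing the Polstra--Tucker result for complete local rings and reducing to that case via localization and completion at $\m$. You have simply made explicit the details the paper leaves implicit (compatibility of the splitting ideals $I_e$ with flat base change for $F$-finite rings, preservation of strong $F$-regularity, and descent by faithful flatness and contraction), all of which are standard and correctly handled.
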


Toward the proof of the main technical result in this section, \Cref{ThmFFRTLinearSimple}, it is convenient to isolate the following elementary lemma.  

\begin{lemma}
   \label{lem: LT generates unit ideal implies element generates it as well}
   Let $A$ be a $\ZZ$-graded ring, not necessarily commutative, with $A_i = 0$ for all $i \ll 0$.
   Let $\delta$ be a nonzero element of $A$, and let $\lt$ be its homogeneous component of largest degree.
   Suppose $A\lt A = A$.
   Then every nonzero homogeneous element $\gamma$ of $A$ can be expressed as a sum of elements of the form $\alpha \delta\beta$, where $\alpha,\beta \in A$ are homogeneous and $\deg(\alpha\lt\beta) \le \deg(\gamma)$.
   In particular, $A \delta A = A$ as well.
\end{lemma}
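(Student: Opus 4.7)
The plan is to prove this by downward-closed induction on the degree of $\gamma$, using the lower bound on degrees (the hypothesis that $A_i = 0$ for $i \ll 0$) to ensure a vacuous base case. Let $d_0$ be such that $A_i = 0$ for $i < d_0$; then for any homogeneous $\gamma$ of degree $d < d_0$, the claim is vacuous. This gives the base case.

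For the inductive step, fix $d$ and assume the statement for all homogeneous elements of degree strictly less than $d$. Let $\gamma$ be homogeneous of degree $d$. The first step is to show that $\gamma$ can be written as $\gamma = \sum_i \alpha_i \lt \beta_i$ where the $\alpha_i,\beta_i$ are homogeneous and $\deg(\alpha_i) + \deg(\lt) + \deg(\beta_i) = d$. This follows from $A\lt A = A$: any expression $\gamma = \sum_j a_j \lt b_j$ can be refined by decomposing each $a_j,b_j$ into homogeneous components, and then the components of total degree $d$ must already sum to $\gamma$ while the remaining components cancel.

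The second step is to replace $\lt$ by $\delta$. Writing $\delta = \lt + \delta'$, where $\delta'$ is the sum of the homogeneous components of $\delta$ of degree strictly less than $\deg(\lt)$, we obtain
\[
\sum_i \alpha_i \delta \beta_i \;=\; \gamma + \eta, \qquad \eta \coloneqq \sum_i \alpha_i \delta' \beta_i.
\]
Each $\alpha_i \delta' \beta_i$ is a sum of homogeneous elements of degrees strictly less than $d$, so $\eta$ decomposes as $\eta = \sum_j \eta_j$ with each $\eta_j$ homogeneous of degree $<d$. The inductive hypothesis applied to each $\eta_j$ yields an expression $\eta_j = \sum_k \alpha_{jk}\delta\beta_{jk}$ with $\deg(\alpha_{jk}\lt\beta_{jk}) \le \deg(\eta_j) < d$. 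Combining and absorbing signs into one of the factors gives the desired expression
\[
\gamma \;=\; \sum_i \alpha_i\delta\beta_i \;-\; \sum_{j,k}\alpha_{jk}\delta\beta_{jk},
\]
where every term $\alpha\delta\beta$ appearing on the right satisfies $\deg(\alpha\lt\beta)\le d$. The final ``in particular'' assertion then follows: every homogeneous element of $A$ lies in $A\delta A$, and since $A$ is generated additively by its homogeneous elements, $A = A\delta A$.

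The arguments are mostly bookkeeping, and there is no serious obstacle; the only delicate point is ensuring, when expressing $\gamma$ in terms of $\lt$, that one may choose the factors to be homogeneous with the correct total degree, which is resolved by decomposing into homogeneous components and using that $\gamma$ itself is homogeneous. The bounded-below grading is essential only to make the induction well-founded.
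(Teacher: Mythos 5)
Your proof is correct and follows essentially the same route as the paper: induction on $\deg(\gamma)$ with a vacuous base case from the bounded-below grading, refinement of an expression $\gamma=\sum\alpha_i\lt\beta_i$ to homogeneous factors of total degree $\deg(\gamma)$, and absorption of the lower-degree error terms $\alpha_i\delta'\beta_i$ via the inductive hypothesis. The only difference is cosmetic bookkeeping in how the error terms are grouped before applying induction.
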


\begin{proof}
   We use induction on the degree of $\gamma$, noting that the base of induction holds (vacuously) because of our assumption on $A$.
   Let $\gamma \in A$ be a nonzero homogeneous element, and suppose the claim holds for like elements of smaller degree.
   Since $A\lt A = A$, we can write $\gamma = \sum_{i=1}^r \alpha_i \lt \beta_i$ for some $\alpha_i,\beta_i \in A$.
   By expanding and comparing homogeneous components, we may assume that $\alpha_i$ and $\beta_i$ are homogeneous and $\deg(\alpha_i\lt \beta_i) = \deg(\gamma)$ for each $i$.
   Now write $\delta = \lt + \delta_1 + \cdots + \delta_s$, where each $\delta_j$ is homogeneous with $\deg(\delta_j) < \deg(\lt)$.
   Then 
   \[\gamma = \sum_{i = 1}^r \alpha_i \delta \beta_i - \sum_{i=1}^r\sum_{j=1}^s \alpha_i \delta_j \beta_i\]
   where each nonzero term in the double sum is homogeneous of degree less than $\deg(\gamma)$, so applying the induction hypothesis to those terms gives us the result.
\end{proof}

We are now ready to prove the main result in this section.

\begin{theorem}\label{ThmFFRTLinearSimple}
   Let $R$ be as in \Cref{Notation_graded_prime}, and suppose that $R$ is strongly $F$-regular with FFRT.
   If $\gbf{\bullet}{R}$ is a generalized Bernstein filtration on $D_{R|\kk}$, then $(D_{R|\kk},\gbf{\bullet}{R})$ is linearly simple.
\end{theorem}

\begin{proof}
   Recall from \Cref{Notation_graded_prime} that $n=\dim_\kk \m/\m^2$ and $w =\max\{t\in\NN\;|\;[ \m/\m^2]_t\neq 0\}$.
   \Cref{prop: linear simplicity is independent of slope} allows us to assume that the slope of $\gbf{\bullet}{R}$ is $2w$.
   Fix $Z\in \PP$    
   such that $\gbf{i}{R}\subseteq D_{R|\kk}^{Z i}$ for each~$i$, as in \Cref{prop:compare-gbf-order}, and $a \in \PP$ such that $I_{a+e}(R)\subseteq \m^{p^e}$ for every $e\in\NN$, as in \Cref{LemmaUniformPowers}.
   
   As $R$ has FFRT, there exist finitely generated $\QQ$-graded $R$-modules $M_1,\ldots,M_\ell$ that are indecomposable in the category $\cat$ of \Cref{rem: KS category}, with $M_i$ not graded-isomorphic to $M_j(d)$ for any $i\ne j$ and $d\in \QQ$,
   and such that each $R^{1/p^e}$ is a direct sum of copies of the $M_i$, with possible rational degree shifts, and every $M_i$ is a direct summand of some $R^{1/p^{e_i}}$, again with a possible degree shift.
   Set $b = \max\{e_1,\ldots,e_\ell\}$.
   As $R$ is strongly $F$-regular, it is in particular $F$-split, so each $R^{1/p^{e_i}}$ is an $R$-direct summand of $R^{1/p^b}$, and the decomposition of $R^{1/p^b}$ as a direct sum of indecomposable $\QQ$-graded $R$-modules contains \emph{all} the $M_i$, up to degree shift, by the Krull--Schmidt Theorem.
   
   Let $i \in \NN$ and $\delta\in \gbf{i}{R}\smallsetminus \{0\}$.
   We wish to show the existence of $C \in \PP$, independent of~$i$ or $\delta$, such that $1\in \gbf{C i}{R}\cdot\delta\cdot \gbf{C i}{R}$.
   As in the proof of \Cref{ThmRGLinearSimple}, we assume that $i > 0$.

   Let $\lt$ be the homogeneous component of $\delta$ of largest degree.
   Then $\lt \in \gbf{i}{R} \subseteq D_{R|\kk}^{Zi}$, and hence $-wZ i \le \deg (\lt)\leq i$.
   Because $R$ can be generated by elements of degree at most~$w$, the nonzero operator $\lt$ must act nontrivially on degree $\le w \ord(\lt)$, that is, there exists $f\in R$ homogeneous with $\deg(f)\leq w\ord(\lt)\leq w Z i$ such that $g\coloneqq\lt(f)\neq 0$.
   We note that $\deg(g)=\deg(f)+\deg(\lt)\leq wZ i +i$.
   In particular, $f$ and $g$ are not in $\m^{(wZ +2) i}$, and thus if we set $c =\lceil \log_p ((wZ +2) i )\rceil$, we see that $f$ and $g$ do not lie in $I_{a+c}(R) \subseteq \m^{p^c}$.
   Equivalently, $f^{1/p^b}$ and $g^{1/p^b}$ do not lie in $I_{a+c}(R^{1/p^b})$, and putting $e=a+b+c$, we deduce that $R^{1/p^b} f^{1/p^e}$ and $R^{1/p^b} g^{1/p^e}$ are free direct summands of $R^{1/p^e}$.
   
   As $D_{R|\kk}^{Z i}\subseteq D_{R|\kk}^{(c)}$ by \eqref{eq: comparing order and level filtrations}, $\lt$ is $R^{p^c}$-linear, and the map $\lt^{\,1/p^e}:R^{1/p^e}\to R^{1/p^e}$ defined as in \eqref{eq: definition of roots of maps} is $R^{1/p^{a+b}}$-linear, and thus $R$-linear.
   This map induces a bijection between the free summands $R^{1/p^b}f^{1/p^e}$ and $R^{1/p^b}g^{1/p^e}$.
   Given our choice of $b$, and viewing $\lt^{\,1/p^e}$ as a block matrix as in \Cref{RemMaximalEnd}, it follows that for each $j$, the block of $\lt^{\,1/p^e}$ with entries in $\End_R(M_j)$ must contain at least one (homogeneous) unit.
   This shows that $\lt^{\,1/p^e}$ does not lie in any maximal homogeneous two-sided ideal of $\End_{R}(R^{1/p^e})$.
   Equivalently, $\lt$ does not lie in any maximal homogeneous two-sided ideal of $\End_{R^{ p^e }} (R)=D_{R|\kk}^{(e)}$, and hence the two-sided homogeneous ideal it generates must be the unit ideal, that is,  
   \[
      D_{R|\kk}^{(e)}\cdot \lt \cdot D_{R|\kk}^{(e)}=D_{R|\kk}^{(e)}.
   \]
   Since $D_{R|\kk}^{(e)} \subseteq D_{R|\kk}^{np^e}$ by \eqref{eq: comparing order and level filtrations 2}, $D_{R|\kk}^{(e)}$ is zero in degree $< - wnp^e$.
   \Cref{lem: LT generates unit ideal implies element generates it as well} then shows the existence of $\alpha_j,\beta_j \in D_{R|\kk}^{(e)}$ homogeneous with $\deg(\alpha_j \lt \beta_j) \le 0$ such that
   \begin{equation}
      \label{eq: 1 =}
      1 = \sum_{j = 1}^m \alpha_j \delta \beta_j.
   \end{equation}

   Recalling that $c =\lceil \log_p ((wZ +2) i )\rceil$ and $e=a+b+c$, we now observe that
   \[
      D_{R|\kk}^{(e)}\subseteq D_{R|\kk}^{np^e}\subseteq D_{R|\kk}^{np^{a+b+1} (wZ +2)i }
   \]
   and set $Y =np^{a+b+1}(wZ +2)$. 
   The $\alpha_j$ and $\beta_j$ lie in $D_{R|\kk}^{Y i}$, so their degrees are bounded below by $-wY i $.
   As $\deg(\alpha_j \lt \beta_j) \le 0$, we see that
   \[
      \deg(\alpha_j)\le -\deg(\lt)-\deg(\beta_j)\leq wZ i+wY i
   \]
   and so
   \[
      \deg(\alpha_j)+ 2w\ord(\alpha_j)\leq wZ i + wY i+2w Y i.
   \]
   We have just shown that $\alpha_j\in \gbf{C i}{R}$, where $C =w(Z +3Y)$.
   Likewise, $\beta_j\in \gbf{C i}{R}$, and \eqref{eq: 1 =} tells us that $1\in \gbf{C i}{R}\cdot\delta\cdot\gbf{C i}{R}$, which concludes the proof.
\end{proof}

\begin{corollary} \label{Cor_FFRT_Bernstein}
   Let $R$ be as in \Cref{Notation_graded_prime}, and suppose that $R$ is strongly $F$-regular with FFRT.
   Then $R$ is a Bernstein algebra.
   In particular, every $D_{R|\kk}$-module satisfies Bernstein inequality with respect to any generalized Bernstein filtration.
   Furthermore, $R$, its localizations $R_f$ for $f\in R$, and its local cohomology modules $H^j_I(R)$ for $I\subseteq R$ are holonomic $D_{R|\kk}$-modules, and hence have finite length as $D_{R|\kk}$-modules.
 \end{corollary}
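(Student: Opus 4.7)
The plan is to deduce this corollary essentially by assembling the machinery developed earlier in the paper. The structure of the argument is completely parallel to the characteristic zero case \Cref{Cor_invariant_Bernstein}: one verifies the hypotheses of \Cref{thm:dim-Bernstein-filtration} and then reads off the conclusions.

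First, I would invoke \Cref{ThmFFRTLinearSimple}, which was just established, to conclude that $(D_{R|\kk},\gbf{\bullet}{R})$ is linearly simple with respect to any generalized Bernstein filtration $\gbf{\bullet}{R}$. Second, since $R$ is strongly $F$-regular, \Cref{positive-diff-sig: R} (citing \cite[Theorem~5.17]{BJNB}) tells us that the differential signature $\ds(R)$ is positive. With linear simplicity and positive differential signature in hand, the hypotheses of \Cref{thm:dim-Bernstein-filtration} are satisfied, so $R$ is a Bernstein algebra. This in turn yields, as part of the same theorem, that $R$, each localization $R_f$ for $f\in R$, and each local cohomology module $H^j_I(R)$ for an ideal $I\subseteq R$ are holonomic $D_{R|\kk}$-modules.

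Bernstein's inequality for arbitrary $D_{R|\kk}$-modules then follows directly from \Cref{thm:Bern}, since $(D_{R|\kk},\gbf{\bullet}{R})$ is linearly simple and $\Dim(\gbf{\bullet}{R})=2\dim(R)$. For the finite length statement, I would apply \Cref{ThmLenHol}: because $(D_{R|\kk},\gbf{\bullet}{R})$ is linearly simple with finite dimension and positive finite multiplicity (the latter two guaranteed by $R$ being a Bernstein algebra), every holonomic $D_{R|\kk}$-module has finite length; in particular this applies to $R$, $R_f$, and $H^j_I(R)$.

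There is essentially no obstacle at this stage: the entire technical difficulty was absorbed into the proof of \Cref{ThmFFRTLinearSimple} (which uses the FFRT hypothesis together with uniform control of splitting ideals from \Cref{LemmaUniformPowers}) and into the positivity of differential signature for strongly $F$-regular rings. The corollary itself is a bookkeeping assembly of these inputs.
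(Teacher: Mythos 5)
Your proposal is correct and follows exactly the paper's own route: the paper proves this corollary by combining \Cref{ThmFFRTLinearSimple} with \Cref{thm:dim-Bernstein-filtration}, using the positivity of the differential signature of strongly $F$-regular rings \cite[Theorem~5.17]{BJNB}. Your additional remarks tracing the Bernstein inequality to \Cref{thm:Bern} and the finite-length claim to \Cref{ThmLenHol} are just an unpacking of what \Cref{thm:dim-Bernstein-filtration} and the definition of Bernstein algebra already deliver, so there is no substantive difference.
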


\begin{proof}
   This follows from \Cref{thm:dim-Bernstein-filtration,ThmFFRTLinearSimple}, using the fact that the differential signature of a strongly $F$-regular $\kk$-algebra is positive \cite[Theorem~5.17]{BJNB}.
\end{proof}

We end with a couple of examples to demonstrate the necessity of the hypotheses.

\begin{example}\label{eg61}
   Let $\kk$ be a field of positive characteristic, and $R=\displaystyle \frac{\kk[x,y]}{(xy)}$.
   Then $R$ has FFRT, but is not strongly $F$-regular.
   The ring $R$ is not a Bernstein algebra since $(D_{R|\kk},\gbf{\bullet}{R})$ is not linearly simple; moreover, $R$ is not a simple $D_{R|\kk}$-module.
   For this ring, the residue field $R/(x,y)\cong \kk$ is a $D_{R|\kk}$-module; any filtration on this module has dimension zero.
\end{example}

\begin{example}\label{eg62}
   Let $\kk$ be a field of positive characteristic, and
   \[	S =\frac{\kk[s, t, u, v, w, x, y, z]}{(su^2x^2 + sv^2y^2 + tuxvy + tw^2z^2)}.
   \]
   This is a strongly $F$-regular standard graded domain that has a local cohomology module with infinitely many associated primes \cite[Theorem~5.1]{SinghSwanson}, and hence, $S$ is not a Bernstein algebra by \Cref{prop: finite ass}.
\end{example}
	
\bibliographystyle{skalpha}
\bibliography{refs}
\end{document}